\newcommand{\arxiv}[1]{\href{http://arxiv.org/abs/#1}{\tt arXiv:\nolinkurl{#1}}}
\newcommand{\bz}{{\Bbb Z}}
\newcommand{\aff}{\mathrm{aff}}
\newcommand{\ch}{\mathrm{ch}}
\newcommand{\intrinsic}{\mathrm{int}}
\newcommand{\g}{\mathfrak{g}}
\newcommand{\lev}{\mathrm{lev}}
\newcommand{\wt}{\mathrm{wt}\,}
\newcommand{\oP}{\overline{P}}
\newcommand{\oW}{\overline{W}}
\newcommand{\Q}{\mathbb{Q}}
\newcommand{\Z}{\mathbb{Z}}
\newcommand\tab[1]{\begin{array}{|c|}\hline {\lower1pt\hbox{$#1$}}\\ \hline \end{array}}
\newcommand{\La}{\Lambda}
\newcommand{\la}{\lambda}
\newcommand{\os}{\text{outer}}
\newcommand{\is}{\text{inner}}
\newcommand{\ol}{\overline}
\newcommand{\Aut}{\text{Aut}}
\newcommand{\ms}{\text{int}}
\numberwithin{equation}{section}
\newtheorem{theorem}{Theorem}
\newtheorem{prop}[theorem]{Proposition}
\newtheorem{lemma}[theorem]{Lemma}
\newtheorem{corollary}[theorem]{Corollary}
\theoremstyle{definition}
\newtheorem{definition}[theorem]{Definition}
\newtheorem{remark}[theorem]{Remark}
\newtheorem{example}[theorem]{Example}
\numberwithin{theorem}{section}
\begin{document}
 
\title[Demazure crystals and energy function]
{Demazure crystals, Kirillov--Reshetikhin crystals, and the energy function}
 
\author[A.~Schilling]{Anne Schilling}
\address{Department of Mathematics, University of California, One Shields
Avenue, Davis, CA 95616-8633, U.S.A.}
\email{anne@math.ucdavis.edu}
\urladdr{http://www.math.ucdavis.edu/\~{}anne}
 
\author[P.~Tingley]{Peter Tingley}
\address{Department of Mathematics, M.I.T., Cambridge, MA 02139-4307, U.S.A.}
\email{peter.tingley@gmail.com}
\urladdr{http://www-math.mit.edu/\~{}ptingley/}
 
\subjclass{Primary 81R50, 81R10; Secondary: 05E99}

\begin{abstract}
It has previously been shown that, at least for non-exceptional Kac--Moody Lie algebras,
there is a close connection between Demazure crystals and tensor products of
Kirillov--Reshetikhin crystals. In particular, certain Demazure crystals are isomorphic as
classical crystals to tensor products of  Kirillov--Reshetikhin crystals via a canonically chosen
isomorphism. Here we show that this isomorphism intertwines the natural affine grading on
Demazure crystals with a combinatorially defined energy function.
As a consequence, we obtain a formula of the Demazure character in terms of the energy function,
which has applications to Macdonald polynomials and $q$-deformed Whittaker functions.
\end{abstract}
 
\maketitle

\tableofcontents
 
\section{Introduction}
 
Kashiwara's theory of crystal bases~\cite{Kashiwara:1994} provides a remarkable combinatorial
tool for studying highest weight representations of symmetrizable Kac--Moody algebras and their 
quantizations. Here we consider finite-dimensional representations of the quantized universal enveloping 
algebra $U_q'(\g)$ corresponding to the derived algebra $\g'$ of an affine Kac--Moody algebra. These 
representations do not extend to representations of $U_q(\g)$, but one can nonetheless
define the notion of a crystal basis. In this setting crystal bases do not always exist, but
there is an important class of finite-dimensional modules for $U_q'(\g)$ that are known to
admit crystal bases: tensor products of the Kirillov--Reshetikhin
modules $W^{r, s}$ from~\cite{KR:1987} (denoted $W(s \omega_r)$ in that paper), where
$r$ is a node in the classical Dynkin diagram and $s$ is a positive integer.
 
The modules $W^{r, s}$ were first conjectured to admit crystal bases $B^{r,s}$
in~\cite[Conjecture 2.1]{HKOTY:1999}, and moreover it was conjectured that these crystals
are perfect whenever $s$ is a multiple of a particular constant $c_r$ (perfectness is a technical
condition which allows one to use the finite crystal to construct highest weight crystals,
see~\cite{KMN1:1992}). This conjecture has now been proven in all non-exceptional cases
(see~\cite{Okado:2007,OS:2008} for a proof that the crystals exist,
and~\cite[Theorem 1.2]{FOS:2010} for a proof that they are perfect). We call $B^{r,s}$ a Kirillov--Reshetikhin 
(KR) crystal.
 
The perfectness of KR crystals ensures that they are related to highest weight
affine crystals via the construction in~\cite{KMN1:1992}. In~\cite{Kashiwara:2005}, Kashiwara
proposed that this relationship is connected to the theory of Demazure
crystals~\cite{Kashiwara:1993,Littelmann:1995}, by conjecturing that perfect KR crystals are
isomorphic as classical crystals to certain Demazure crystals (which are subcrystals of affine
highest weight crystals). This was proven in most cases in~\cite{FL:2006,FL:2007}. More relations between 
Demazure crystals and tensor products of perfect KR crystals
were investigated in~\cite{KMOU:1998, KMOTU:1998,KMOTU:1998a,FSS:2007,Naoi:2011}.
 
There is a natural grading $\deg$ on a highest weight affine crystal $B(\Lambda)$, where
$\deg(b)$ records the number of $f_0$ in a string of $f_i$'s that act on the highest weight
element to give $b$ (this is well-defined by weight considerations). Due to the ideas
discussed above, it seems natural that this grading should transfer to a grading
on a tensor product of KR crystals.
 
Gradings on tensor products of KR crystals have in fact been studied,
and are usually referred to as ``energy functions."  They are vast generalizations of the major index
statistics on words, which can be viewed as elements in $(B^{1,1})^{\otimes L}$.
The idea dates to the earliest works on perfect crystals~\cite{KMN1:1992, KMN2:1992}, and was expanded 
in~\cite{OSS:2003} following conjectural definitions in~\cite{HKOTT:2002}. A function $D$, which we will refer 
to as the $D$-function,
is defined as a sum involving local energy functions for each pair of factors in the tensor product
and an `intrinsic energy' of each
factor. It has been suggested that there is a simple global characterization
of intrinsic energy related to the affine grading on a corresponding highest weight crystal
(see~\cite[Section 2.5]{OSS:2002}, \cite[Proof of Proposition 3.9]{HKOTT:2002}). However,
as far as we know, it has previously never been shown that the explicit definition of
intrinsic energy actually satisfies this condition.
 
\subsection{Results}
In the present work, we restrict to non-exceptional type (i.e. all affine Kac--Moody algebras
except $A_2^{(2)}$, $G_2^{(1)}$, $F_4^{(1)}$, $E_6^{(1)}$, $E_7^{(1)}$, $E_8^{(1)}$,
$E_6^{(2)}$ and $D_4^{(3)})$, where KR crystals are known to exist. The relationship between KR crystals and 
Demazure crystals discussed above can be enhanced: There is in fact a unique isomorphism of classical crystals 
between any tensor product $B = B_1 \otimes \cdots \otimes B_k$ of perfect level $\ell$ KR crystals and a certain 
Demazure crystal such that all $0$ arrows on the Demazure side correspond to $0$ arrows on the KR sides 
(although there are more $0$ arrows in the KR crystal). See Theorem \ref{FSS-theorem} for the precise statement.
This was proven by Fourier, Shimozono and the first author in~\cite[Theorem 4.4]{FSS:2007} under certain assumptions
since at the time KR crystals were not yet known to exist. Here we point out that in most cases the assumptions 
from~\cite{FSS:2007} follow from~\cite{Okado:2007,OS:2008,FOS:2009,FOS:2010}. In the remaining cases of
type $A_{2n}^{(2)}$ and the spin nodes for type $D_n^{(1)}$ we prove the statement separately,
thereby firmly establishing this relationship between KR crystals and Demazure crystals in all non-exceptional types.   

We then establish the correspondence between the $D$ function and the affine grading discussed earlier in this introduction. 
That is, we show that the unique isomorphism discussed above intertwines the 
basic grading on the Demazure crystal with the $D$-function on the KR crystal, up to a shift (i.e. addition of a global constant). 
This also allows for a new characterization of the $D$ function. Define the intrinsic energy function $E^{\intrinsic}$ on $B$ by 
letting $E^{\intrinsic}(b)$ record the minimal
number of $f_0$ in a path from a certain fixed $u \in B$ to $b$. We show that $E^{\intrinsic}$ agrees with the $D$-function 
up to a shift (i.e. addition of a global constant). The isomorphism between KR crystals and Demazure crystals actually 
intertwines the basic grading with $E^\intrinsic$ exactly. In particular this shows that for any Demazure crystal arrow
(see Remark~\ref{remark.demazure arrow}) the energy changes by 1 on the corresponding classical crystal
components.
 
We also consider the more general setting when $B$ is a tensor product of KR crystals which
are not assumed to be perfect or of the same level. The $D$ function is still well-defined, and we give a
precise relationship between $D$ and the affine grading on a related direct sum of highest weight modules
in Corollaries~\ref{cor:gen1} and~\ref{cor:gen2}.
In this case we no longer give an interpretation in terms of Demazure modules, although for tensor
products of perfect crystals of various levels it was subsequently shown by Naoi~\cite{Naoi:2011} that
the result is the disjoint union of Demazure crystals; see also Remark~\ref{general-demazure}.

\subsection{Applications} 
Our results express the characters of certain Demazure modules
in terms of the intrinsic energy on a related tensor product of KR crystals
(see Corollary~\ref{Echar}). This has potential applications whenever those Demazure characters appear.
 
For untwisted simply-laced root systems, Ion~\cite{Ion:2003}, generalizing results of Sanderson~\cite{San:2000} 
in type $A$, showed that the specializations $E_\lambda(q,0)$ of nonsymmetric 
Macdonald polynomials at $t=0$ coincide with specializations of Demazure characters of level one affine 
integrable modules. If $\lambda$ is anti-dominant, then $E_\lambda(q,0)$ is actually a 
symmetric Macdonald polynomial $P_\lambda(q,0)$. In this case, the relevant Demazure module is 
associated to a tensor product $B$ of level one KR crystals as above, so our results imply that $P_\lambda(q,0)$ 
is the character of $B$, where the powers of $q$ are given by $-D$, see Corollary~\ref{Mac=KR}.
It follows that the coefficients in the expansion of $P_\lambda(q,0)$ in terms of the irreducible characters
are the one-dimensional configuration sums defined in terms of the intrinsic energy in~\cite{HKOTT:2002}. 
 
There is also a relation between Demazure characters and $q$-deformed Whittaker
functions for $\mathfrak{gl}_n$~\cite[Theorem 3.2]{GLO:2008}. Hence our results allow one to study 
Whittaker functions via KR crystals.
 
\subsection{Exceptional types}
KR crystals are still expected to exist in exceptional types, although in most cases this has not yet been
established (see for example~\cite{JS:2010, Kashiwara:2002, KMOY:2007, Yamane:1997}
for some cases where it has).  Furthermore, it is expected that the relationship between KR crystals and Demazure
crystals holds in general (see~\cite{Kashiwara:2005} for the conjecture,
and~\cite{FL:2006, FL:2007} for a proof that it holds in some exceptional cases). Our expectation is
that Theorem~\ref{FSS-theorem} and Corollary~\ref{Edeg} would also continue to hold in all cases.
 
\subsection{Outline}
In Section~\ref{section.crystals} we briefly review the theory of crystals. In Section~\ref{section.finite.type}
we review combinatorial models for non-exceptional finite type crystals and their branching rules.
Section~\ref{section:realization} is devoted to combinatorial models for KR crystals.
In Section~\ref{section.energy}, we introduce the two energy functions $E^\intrinsic$ and
$D$. Section~\ref{section.demazure} discusses the isomorphism between Demazure crystals and tensor
products of perfect KR crystals. In Section~\ref{section.energy.equiv} we relate the basic grading on a Demazure
crystal with the energy function on a tensor product of KR crystals and show that the two energy functions agree
up to a shift (Theorems~\ref{grading=energy} and~\ref{E=D}).
Some of these results are generalized to tensor products of (not necessarily perfect) KR crystals of different level
in Section~\ref{section.nonperfect}.
In Section~\ref{section.applications} we discuss applications to Demazure characters,
nonsymmetric Macdonald polynomials, and Whittaker functions.
 
\subsection*{Acknowledgments}
We would like to thank Dan Bump, Ghislain Fourier, Stavros Kousidis, Cristian Lenart, Sergey Oblezin, 
and Masato Okado for enlightening discussions, and Nicolas Thi\'ery for his help with {\tt Sage}. Most of the
KR crystals have been implemented in the open-source mathematics system {\tt Sage}
({\tt sagemath.org}) by the first author. A thematic tutorial on crystals and affine crystals written
by Dan Bump and the first author is available in {\tt Sage}. A copy can be found at\newline
{\tt http://www.math.ucdavis.edu/\~{}anne/sage/lie/crystals.html}\newline
{\tt http://www.math.ucdavis.edu/\~{}anne/sage/lie/affine\_crystals.html}.\newline
We would like to thank the Fields Institute in Toronto and the Hausdorff Institut in Bonn for their hospitality, 
where part of this work was done.
AS was in part supported by NSF grants DMS--0652641, DMS--0652652, and DMS--1001256.
PT was partially supported by NSF grant DMS-0902649.

\section{Kac--Moody algebras and Crystals}
\label{section.crystals}
 
\subsection{General setup}
Let $\g$ be a Kac--Moody algebra. Let $\Gamma=(I,E)$ be its Dynkin diagram, where $I$ is the
set of vertices and $E$ the set of edges. Let $\Delta$ be the corresponding root system and  
$\{\alpha_i \mid i\in I\}$ the set of simple roots. Let $P,Q, P^\vee$, and $Q^\vee$ denote the weight lattice, 
root lattice, coweight lattice, and coroot lattice respectively. 
 
Let $U_q(\g)$ be the corresponding quantum enveloping algebra over ${\Bbb Q}(q)$.
Let $\{ E_i, F_i\}_{i\in I}$ be the standard elements in $U_q(\g)$ corresponding to the
Chevalley generators of the derived algebra $\g'$. We recall the triangular decomposition
\[
	U_q(\g) \cong  U_q(\g)^{<0} \otimes U_q(\g)^0 \otimes U_q(\g)^{>0},  
\]
where $U_q(\g)^{<0}$ is the subalgebra generated by the $F_i$, $U_q(\g)^{>0}$ is the
subalgebra generated by the $E_i$, $U_q(\g)^0$ is the abelian group algebra generated
by the usual elements $K_w$ for $w \in P^\vee$, and the isomorphism is as vector spaces.
Let $U_q'(\g)$ be the subalgebra generated by $E_i, F_i$ and $K_i := K_{\alpha_i^\vee}$ for $i \in I$.
 
We are particularly interested in the case when $\g$ is of affine type. In that situation, we use the following 
conventions:  $\Lambda_i$ to denotes the fundamental
weight corresponding to $i \in I$. For $i \in I \backslash \{ 0 \}$, $\omega_i$ denotes
the fundamental weight corresponding to that node in the related finite type Lie algebra.
The corresponding finite type weight lattice and Weyl group are denoted $\oP$ and $\oW$, respectively.
 
\subsection{$U_q(\g)$ crystals}  \label{Ug-cryst}
Here we give a very quick review, and refer the reader to \cite{HK:2002} more details. For us, a crystal
is a nonempty set $B$ along with operators $e_i : B \rightarrow B \cup\{ 0 \}$ and
$f_i : B \rightarrow B \cup \{ 0 \}$ for $i\in I$, which satisfy some conditions. The set $B$
records certain combinatorial data associated to a representation $V$ of $U_q(\g),$ and the operators $e_i$ 
and $f_i$ correspond to the Chevalley
generators $E_i$ and $F_i$.  The relationship between the crystal $B$ and the module
$V$ can be made precise using the notion of a crystal basis for $V$.
 
Often the definition of a crystal includes three functions
$\wt, \varphi, \varepsilon: B \rightarrow P$, where $P$ is the weight lattice. In the case of
crystals of integrable modules, these functions can be recovered (up to a  global shift in a
null direction in cases where the Cartan matrix is not invertible) from the knowledge of $e_i$
and $f_i$, as we discuss in a slightly different context in Section \ref{uprime-crystals}.
 
An important theorem of Kashiwara shows that every integrable $U_q(\g)$ highest weight
module $V(\lambda)$ has a crystal basis and hence a corresponding crystal $B(\lambda)$.
 
\subsection{$U_q'(\g)$ crystals} \label{uprime-crystals}
In the case when the Cartan matrix is not invertible, one can define an extended notion
of $U_q'(\g)$ crystal bases and crystals that includes some cases which do not lift to $U_q(\g)$ crystals. 
Note however that not all integrable $U'_q(\g)$ representations have corresponding crystals. 
See e.g.~\cite{Kashiwara:2002}. We consider only integrable crystals $B$, i.e. crystals where each $e_i, f_i$ 
acts locally nilpotently. Define a weight function on such a $B$ as follows:
First set
\begin{align*}
\varepsilon_i (b) & := \max \{ m \mid e_i^m (b) \neq 0 \}, \\
\varphi_i (b) & := \max \{ m \mid f_i^m (b) \neq 0 \}.
\end{align*}
Let
$\Lambda_i$ be the fundamental weight associated to $i \in I$.  For each $b \in B$, define
\begin{enumerate}
\item $\displaystyle \varphi(b):= \sum_{i \in I} \varphi_i (b) \Lambda_i$,
\item $\displaystyle \varepsilon(b) := \sum_{i \in I} \varepsilon_i (b) \Lambda_i$,
\item $\displaystyle \wt (b):=\varphi(b)-\varepsilon(b)$.
\end{enumerate}
Then $\wt(b)$ corresponds to the classical weight grading of the $U_q(\g)$ module associated to $B$, and is referred 
to as the weight function. Notice that $\wt(b)$ is always in
\begin{equation}
P':=  \text{span} \{ \Lambda_i : i \in I \}.
\end{equation}
If the Cartan matrix of $\g$ is not invertible, $P'$ is a proper sublattice of $P$.
 
\begin{remark}
One would expect that $f_i$ should have weight $-\alpha_i$. This is true for $U_q(\g)$ crystals, but for 
$U'_q(\g)$ crystals the weight of $f_i$ is actually the projection of
$-\alpha_i$ onto $P'$ under the projection that sends null roots to $0$.
\end{remark}
 
\subsection{Tensor products of crystals}

The tensor product rule for $U'_q(\g)$ or $U_q(\g)$ modules leads to a tensor product rule for the corresponding 
crystals. Following~\cite{FSS:2007}, we use the opposite conventions from Kashiwara~\cite{Kashiwara:1994}.
If $A$ and $B$ are two crystals, the tensor product $A \otimes B$ is the crystal whose underlying
set is the Cartesian set $\{ a \otimes b\mid a \in A, b \in B \}$ with operators $e_i$ and $f_i$
defined by:
\begin{equation}
\begin{aligned}
& e_i (a \otimes b)=
\begin{cases}
e_i  (a) \otimes b  & \text{if $\varepsilon_i(a) > \varphi_i(b)$,}\\
a \otimes e_i  (b)  & \text{otherwise,}
\end{cases} \\
& f_i (a \otimes b)=
\begin{cases}
f_i  (a) \otimes b & \text{if $\varepsilon _i(a) \geq \varphi_i(b)$,}\\
a \otimes f_i (b)  & \text{otherwise.}
\end{cases}
\end{aligned}
\end{equation}
   
\subsection{Abstract crystals}
 
\begin{definition} \label{abstract-C}
Let $\g$ be a Kac--Moody algebra with Dynkin diagram $\Gamma= (I, E).$ Let $B$ be a
nonempty set with operators $e_i, f_i : B \rightarrow B \sqcup \{ 0 \}$. We say $B$ is a
{\it regular abstract crystal} of type $\g$ if, for each pair $i \neq j \in I$, $B$ along with
the operators $e_i, f_i, e_j, f_j$ is a union of (possibly infinitely many) integrable highest weight 
$U_q(\g_{i,j})$ crystals, where $\g_{i,j}$ is the Lie algebra with
Dynkin diagram containing $i$ and $j$, and all edges between them. 
\end{definition}
 
\begin{remark} If $\g$ is an affine algebra other than $\widehat{\mathfrak{sl}}_2$,
then any $U_q'(\g)$ crystal is a regular abstract crystal of type $\g$.
\end{remark}
 
\subsection{Perfect crystals}

In this section, $\g$ is of affine type. 
Let $c=\sum_{i\in I} a_i^\vee \alpha_i^\vee$ be the canonical central element associated to $\g$ and $P^+$
the set of dominant weights, that is, $P^+ = \{ \Lambda \in P \mid \Lambda(\alpha_i^\vee) \in \mathbb{Z}_{\ge 0} \}$.
We define the {\it level} of $\Lambda \in P^+$ by $\lev(\Lambda):= \Lambda(c)$. For each $\ell \in \bz$, we 
consider the sets
\begin{equation*}
        P_\ell = \{ \Lambda \in P \mid \lev(\Lambda) = \ell \} \quad \text{and} \quad P_\ell^+
        = \{ \Lambda \in P^+ \mid \lev(\Lambda) = \ell \},
\end{equation*}
the sets of level-$\ell$ weights and level-$\ell$ dominant weights respectively. Note that $P^+_\ell = \emptyset$
if $\ell < 0$. The following important notion was introduced in \cite{KMN1:1992}.
 
\begin{definition} \label{def:perfect}
For a positive integer $\ell > 0$, a $U'_q(\g)$-crystal $B$ is called a
{\it perfect crystal of level $\ell$} if the
following conditions are satisfied:
\begin{enumerate}
\item $B$ is isomorphic to the crystal graph of a finite-dimensional
$U_q'(\mathfrak{g})$-module.
\item
$B\otimes B$ is connected.
\item
There exists a $\lambda\in \overline{P}$, such that
$\wt(B) \subset \lambda + \sum_{i\in I \setminus \{0\}} \mathbb{Z}_{\le 0} \alpha_i$ and there
is a unique element in $B$ of classical weight $\lambda$.
\item
$\forall \; b \in B, \;\; \lev(\varepsilon (b)) \geq \ell$.
\item \label{perfect:bij}
$\forall \; \Lambda \in P_\ell^{+}$, there exist unique elements
$b_{\Lambda}, b^{\Lambda} \in B$, such that
$$ \varepsilon ( b_{\Lambda}) = \Lambda = \varphi( b^{\Lambda}). $$
\end{enumerate}
\end{definition}
Examples of a perfect and nonperfect crystal is given in Figure~\ref{figure:KR example}.
 
 \begin{figure}
  \begin{center}
 \begin{tabular}{cc}
 \scalebox{.8}{
 \hspace{0.5cm}
 \begin{tikzpicture}[>=latex,line join=bevel,]
\node (1) at (11bp,238bp) [draw,draw=none] {${\def\lr#1{\multicolumn{1}{|@{\hspace{.6ex}}c@{\hspace{.6ex}}|}{\raisebox{-.3ex}{$#1$}}}\raisebox{-.6ex}{$\begin{array}[b]{c}\cline{1-1}\lr{1}\\\cline{1-1}\end{array}$}}$};
  \node (2) at (32bp,162bp) [draw,draw=none] {${\def\lr#1{\multicolumn{1}{|@{\hspace{.6ex}}c@{\hspace{.6ex}}|}{\raisebox{-.3ex}{$#1$}}}\raisebox{-.6ex}{$\begin{array}[b]{c}\cline{1-1}\lr{2}\\\cline{1-1}\end{array}$}}$};
  \node (-1) at (12bp,10bp) [draw,draw=none] {${\def\lr#1{\multicolumn{1}{|@{\hspace{.6ex}}c@{\hspace{.6ex}}|}{\raisebox{-.3ex}{$#1$}}}\raisebox{-.6ex}{$\begin{array}[b]{c}\cline{1-1}\lr{\overline{1}}\\\cline{1-1}\end{array}$}}$};
  \node (-2) at (34bp,86bp) [draw,draw=none] {${\def\lr#1{\multicolumn{1}{|@{\hspace{.6ex}}c@{\hspace{.6ex}}|}{\raisebox{-.3ex}{$#1$}}}\raisebox{-.6ex}{$\begin{array}[b]{c}\cline{1-1}\lr{\overline{2}}\\\cline{1-1}\end{array}$}}$};
  \draw [blue,->] (-2) ..controls (27.827bp,64.675bp) and (22.027bp,44.639bp)  .. (-1);
  \definecolor{strokecol}{rgb}{0.0,0.0,0.0};
  \pgfsetstrokecolor{strokecol}
  \draw (34bp,48bp) node {$1$};
  \draw [red,->] (2) ..controls (32.558bp,140.79bp) and (33.078bp,121.03bp)  .. (-2);
  \draw (42bp,124bp) node {$2$};
  \draw [blue,->] (1) ..controls (16.892bp,216.68bp) and (22.429bp,196.64bp)  .. (2);
  \draw (32bp,200bp) node {$1$};
  \draw [black,<-] (1) ..controls (10.211bp,193.74bp) and (9.6462bp,150.8bp)  .. (10bp,114bp) .. controls (10.333bp,79.329bp) and (11.292bp,38.164bp)  .. (-1);
  \draw (19bp,124bp) node {$0$};
\end{tikzpicture}
\hspace{0.5cm}
}
&
\scalebox{.8}{
\hspace{0.5cm}
\begin{tikzpicture}[>=latex,line join=bevel,]
\node (1) at (8bp,314bp) [draw,draw=none] {${\def\lr#1{\multicolumn{1}{|@{\hspace{.6ex}}c@{\hspace{.6ex}}|}{\raisebox{-.3ex}{$#1$}}}\raisebox{-.6ex}{$\begin{array}[b]{c}\cline{1-1}\lr{1}\\\cline{1-1}\end{array}$}}$};
  \node (0) at (27bp,162bp) [draw,draw=none] {${\def\lr#1{\multicolumn{1}{|@{\hspace{.6ex}}c@{\hspace{.6ex}}|}{\raisebox{-.3ex}{$#1$}}}\raisebox{-.6ex}{$\begin{array}[b]{c}\cline{1-1}\lr{0}\\\cline{1-1}\end{array}$}}$};
  \node (2) at (31bp,238bp) [draw,draw=none] {${\def\lr#1{\multicolumn{1}{|@{\hspace{.6ex}}c@{\hspace{.6ex}}|}{\raisebox{-.3ex}{$#1$}}}\raisebox{-.6ex}{$\begin{array}[b]{c}\cline{1-1}\lr{2}\\\cline{1-1}\end{array}$}}$};
  \node (-1) at (45bp,10bp) [draw,draw=none] {${\def\lr#1{\multicolumn{1}{|@{\hspace{.6ex}}c@{\hspace{.6ex}}|}{\raisebox{-.3ex}{$#1$}}}\raisebox{-.6ex}{$\begin{array}[b]{c}\cline{1-1}\lr{\overline{1}}\\\cline{1-1}\end{array}$}}$};
  \node (-2) at (23bp,86bp) [draw,draw=none] {${\def\lr#1{\multicolumn{1}{|@{\hspace{.6ex}}c@{\hspace{.6ex}}|}{\raisebox{-.3ex}{$#1$}}}\raisebox{-.6ex}{$\begin{array}[b]{c}\cline{1-1}\lr{\overline{2}}\\\cline{1-1}\end{array}$}}$};
  \draw [red,->] (2) ..controls (29.884bp,216.79bp) and (28.844bp,197.03bp)  .. (0);
  \definecolor{strokecol}{rgb}{0.0,0.0,0.0};
  \pgfsetstrokecolor{strokecol}
  \draw (38bp,200bp) node {$2$};
  \draw [black,<-] (1) ..controls (5.6096bp,269.71bp) and (4.071bp,226.75bp)  .. (6bp,190bp) .. controls (6.8901bp,173.04bp) and (7.4676bp,168.79bp)  .. (10bp,152bp) .. controls (13.03bp,131.91bp) and (17.947bp,108.64bp)  .. (-2);
  \draw (15bp,200bp) node {$0$};
  \draw [blue,->] (-2) ..controls (25.381bp,66.439bp) and (27.862bp,50.919bp)  .. (32bp,38bp) .. controls (32.982bp,34.934bp) and (34.219bp,31.766bp)  .. (-1);
  \draw (41bp,48bp) node {$1$};
  \draw [red,->] (0) ..controls (25.126bp,147.03bp) and (24.393bp,140.15bp)  .. (24bp,134bp) .. controls (23.409bp,124.75bp) and (23.143bp,114.47bp)  .. (-2);
  \draw (33bp,124bp) node {$2$};
  \draw [blue,->] (1) ..controls (14.454bp,292.68bp) and (20.517bp,272.64bp)  .. (2);
  \draw (31bp,276bp) node {$1$};
  \draw [black,<-] (2) ..controls (46.995bp,217.47bp) and (48.845bp,213.76bp)  .. (50bp,210bp) .. controls (71.525bp,139.95bp) and (54.171bp,49.504bp)  .. (-1);
  \draw (69bp,124bp) node {$0$};
\end{tikzpicture}
\hspace{0.5cm}
}
\\
$B^{1,1}$ for $C_2^{(1)}$  & $B^{1,1}$ for $B_2^{(1)} (\simeq B^{2,1}$ for $C_2^{(1)}$)
\end{tabular}
\end{center}
 
\caption{Examples of perfect and non-perfect $U_q'(\g)$ crystals. One can verify that the crystal on the right 
is perfect of level $1$, and the crystal on the left is not perfect of any level. These crystals are both KR crystals, 
as indicated, and the vertices are indexed by KN tableaux as in Section~\ref{ss:KN}. In fact, all known finite 
$U_q'(\g)$ crystals are KR crystals or tensor products of KR crystals. 
\label{figure:KR example}}
\end{figure}
 
\subsection{Kirillov--Reshetikhin modules and their crystals}
\label{section.KR}

In this section $\g$ is of affine type.
The Kirillov--Reshetikhin modules were first introduced for the Yangian of $\g'$ in~\cite{KR:1987}.
One can characterize the KR module $W^{r,s}$ for $U_q'(\g)$~\cite{CP:1994,CP:1998},
where $r\in I\setminus \{0\}$ and
$s\ge 1$, as the irreducible representations of $U_q'(\g)$ whose Drinfeld polynomials are given by
\begin{equation}
P_i(u) =
\begin{cases}
(1-q_i^{1-s}u) (1-q_i^{3-s}u) \cdots (1-q_i^{s-1}u) & \text{if } i=r,\\
1 & \text{otherwise.}
\end{cases}
\end{equation}
Here $q_i = q^{(\alpha_i \mid \alpha_i)/2}$.
 
It was shown in~\cite{KMN2:1992} for type $A_n^{(1)}$ and many special cases, and
in~\cite{Okado:2007, OS:2008} for general non-exceptional types, that
the modules $W^{r,s}$ have crystal bases. We denote the resulting crystals by $B^{r,s}$, and
refer to them as {\it KR crystals}. 
 
\begin{theorem} \cite{OS:2008, FOS:2010}
In all non-exceptional types, $W^{r,s}$ has a crystal base $B^{r,s}.$ Furthermore,
if $s$ is a multiple of $c_r$ (see Figure \ref{u-table}) the resulting crystals are perfect. \qed
\end{theorem}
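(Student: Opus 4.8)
The plan is to prove the two assertions separately, and in both to reduce the non-simply-laced types to the simply-laced affine types $A_n^{(1)}$ and $D_n^{(1)}$ by the method of \emph{virtual crystals}. The point is that every non-exceptional affine $\g$ arises by folding a simply-laced affine algebra $\hat{\g}$ along a Dynkin diagram automorphism $\sigma$, and one expects $B^{r,s}$ to be realized as the $\sigma$-fixed subcrystal of an explicitly constructed crystal for $\hat{\g}$. Thus the proof has a ``base case'' (construct the crystals directly in types $A_n^{(1)}$ and $D_n^{(1)}$) and a ``descent'' (transport existence and perfectness through the virtualization), and the remaining work is to handle the special nodes where the uniform construction fails.

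For existence, I would first recall the decomposition of $W^{r,s}$ as a module over the canonical finite-type subalgebra $\g_0$ (delete node $0$); in non-exceptional types this branching is explicitly known. Using it, I would build a combinatorial candidate crystal $B$ whose classical operators $e_i, f_i$ ($i \in I\setminus\{0\}$) are given by Kashiwara--Nakashima tableaux and whose affine operators $e_0, f_0$ come from a promotion-type automorphism (e.g.\ $f_0 = \pr \circ f_{1} \circ \pr^{-1}$ in type $A_n^{(1)}$) or, in the folded types, are induced from $\hat{\g}$. The decisive step is to upgrade this candidate to the genuine crystal base of $W^{r,s}$: one matches Drinfeld polynomials and the classical decomposition, and then invokes a uniqueness/characterization result for the crystal base of such a finite-dimensional $U_q'(\g)$-module. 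For the folded types one must check that the set of $\sigma$-fixed points of $\hat{B}$ is itself a $U_q'(\g)$-crystal realizing the prescribed $\g_0$-decomposition.

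For perfectness, assume $c_r \mid s$ and set $\ell = s/c_r$; then verify the five conditions of Definition~\ref{def:perfect}. Condition (1) is immediate from finite-dimensionality, and conditions (2) and (3) follow from the known connectivity and the existence of a unique extremal $\g_0$-weight in the tableau model. The substance lies in (4) and (5): I would compute $\varepsilon_i$ and $\varphi_i$ explicitly on the tableaux, so that condition (4), $\lev(\varepsilon(b)) \ge \ell$, reduces to minimizing $\sum_i \varepsilon_i(b)\, a_i^\vee$ over $b \in B^{r,s}$ and checking the minimum equals $\ell$; and condition (5), the bijection between $P_\ell^+$ and the pairs $(b_\Lambda, b^\Lambda)$, reduces to exhibiting, for each level-$\ell$ dominant $\Lambda$, the unique $b$ with $\varepsilon(b)=\Lambda$ (and dually $\varphi(b)=\Lambda$). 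In the folded cases I would show that virtualization intertwines $\varepsilon$, $\varphi$, and $\lev$, so that perfectness descends from $\hat{\g}$ to $\g$.

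The hardest parts will be, first, genuinely establishing that $W^{r,s}$ \emph{admits} a crystal base rather than merely producing a combinatorial candidate with the right character — this is where one must control the global basis or invoke a good-module uniqueness argument — and second, the perfectness bijection (5) together with the level computation (4). I expect the spin nodes $r \in \{n-1,n\}$ in type $D_n^{(1)}$ and the special features of type $A_{2n}^{(2)}$ (and the related folded families $B_n^{(1)}$, $C_n^{(1)}$, $A_{2n-1}^{(2)}$, $D_{n+1}^{(2)}$) to require separate, case-by-case tableau combinatorics, since the uniform KN-tableau model and the naive folding both need modification there. Verifying that the fixed-point embedding is compatible with $\varepsilon$, $\varphi$, and $\lev$ in exactly these exceptional nodes is the most delicate bookkeeping, and is where I would concentrate the effort.
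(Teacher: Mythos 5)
You should first note that the paper itself does not prove this theorem: the \qed marks it as imported, with existence cited to \cite{Okado:2007,OS:2008} and perfectness to \cite{FOS:2010}; Section~\ref{section:realization} merely recalls the combinatorial models of \cite{FOS:2009}. Measured against those proofs, your perfectness half is broadly in the right spirit (one does verify the conditions of Definition~\ref{def:perfect} on explicit models, with separate work at the spin nodes of $D_n^{(1)}$ and in the folded families), but your existence argument has a genuine gap. You propose to build a combinatorial candidate crystal, match its classical decomposition and the Drinfeld polynomials of $W^{r,s}$, and then ``invoke a uniqueness/characterization result for the crystal base of such a finite-dimensional $U_q'(\g)$-module.'' No such result exists, and none could work this way: whether $W^{r,s}$ admits a crystal base is an algebraic property of the module at $q=0$ (recall from Section~\ref{uprime-crystals} and \cite{Kashiwara:2002} that integrable $U_q'(\g)$-modules need not have crystal bases), and it cannot be detected by exhibiting an abstract crystal with the correct character. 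The uniqueness statements that do exist, such as Lemma~\ref{uniqueness}, compare two crystals that are already known to be regular crystals; they are used \emph{after} existence is established, to pin down the affine structure of the combinatorial model, never to conjure the crystal base itself. The actual proofs in \cite{Okado:2007,OS:2008} run in the opposite order: one constructs a prepolarization (a suitable symmetric bilinear form compatible with the Kashiwara operators) on a fusion-type construction of $W^{r,s}$ and verifies explicit positivity estimates, which by the crystal pseudobase criterion of Kang--Kashiwara--Misra--Miwa--Nakashima--Nakayashiki \cite{KMN2:1992} forces existence of a crystal pseudobase; only then is the resulting crystal identified with tableaux. Your plan omits this central algebraic tool, and flagging the step as ``the hardest part'' does not fill it.

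Two secondary inaccuracies are worth recording, though they are reparable. First, the virtualization scheme actually used in \cite{FOS:2009} is not a uniform folding from the simply-laced types: type $C_n^{(1)}$ is realized inside type $A_{2n+1}^{(2)}$, and types $A_{2n}^{(2)}$ and $D_{n+1}^{(2)}$ inside type $C_n^{(1)}$ (Theorems~\ref{virtual-KR-C} and~\ref{virtual-KR-AD}); moreover the virtual crystal is not the fixed-point set with restricted operators, but the subset generated from a highest weight element by products or powers of ambient operators (e.g.\ $e_0 = \hat e_0 \hat e_1$ or $e_i = \hat e_i^2$), while in case (AUT) the affine structure comes from the involution $\varsigma$ exchanging nodes $0$ and $1$, not from folding at all. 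Second, perfectness does not formally ``descend'' through virtualization: the level, $\varepsilon$, $\varphi$, and the bijection in condition~\eqref{perfect:bij} of Definition~\ref{def:perfect} involve the ambient and folded weight lattices differently, which is why \cite{FOS:2010} verifies the conditions directly on each model rather than by transport.
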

 
The following is a slight strengthening of results from~\cite{FOS:2009}.
 
\begin{lemma} \label{uniqueness}
Let $\g$ be a non-exceptional affine Kac--Moody algebra with index set $I=\{0,1,\ldots,n\}$.
Fix $r \in I \setminus \{0\}$ and $s>0$. Then any regular abstract crystal $B$ (see
Definition~\ref{abstract-C}) of type $\g$ which is isomorphic to $B^{r,s}$ as a
$\{1,2, \ldots, n \}$-crystal is also isomorphic to $B^{r,s}$ as an $I$-crystal.
\end{lemma}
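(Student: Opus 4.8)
The plan is to reduce the statement to the claim that the $0$-arrows of $B$ are uniquely determined by its $\{1,\dots,n\}$-crystal structure together with the regular abstract crystal axioms, and then to establish that uniqueness using the explicit classical decomposition of $B^{r,s}$. Fix a classical (that is, $\{1,\dots,n\}$-) crystal isomorphism $\phi\colon B^{r,s}\to B$; since $e_0$ and $f_0$ are partial inverses it suffices to show that $\phi$ intertwines $f_0$. The first observation is that the target of $f_0$ is severely constrained by weight: in a regular abstract crystal the weight function $\wt$ is recovered from the $e_i,f_i$ as in Section~\ref{uprime-crystals}, the isomorphism $\phi$ preserves classical weight, and $f_0$ shifts the classical weight by the projection of $-\alpha_0$ onto $P'$, which is the highest (short) root up to the conventions for twisted types. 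Hence for each $b$ the element $f_0(b)$, if nonzero, must lie in a single prescribed classical weight space, and $\phi$ automatically matches these weight spaces on the two sides.

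The engine of the argument is the regular abstract crystal axiom applied to the pairs $\{0,i\}$. For each $i$ adjacent to $0$ in the affine Dynkin diagram, $\g_{0,i}$ is a rank-two algebra ($A_1\times A_1$, $A_2$, $B_2$ or $C_2$ in the non-exceptional cases), and the $\{0,i\}$-restriction of $B$ is a disjoint union of integrable highest weight $U_q(\g_{0,i})$-crystals. Such rank-two crystals are rigid: once all $i$-arrows and the highest weight data are fixed, the $0$-arrows inside each component are forced. Since the $i$-arrows are part of the already-known classical structure, this turns the determination of $f_0$ into a propagation problem: knowing $f_0$ on one element of each $\{0,i\}$-string forces it on the whole string, and combining the constraints from the various neighbors $i$ of $0$ with classical connectivity propagates $f_0$ across all of $B$. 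I would organize this propagation starting from a distinguished element, for instance an element of extremal classical weight supplied by the decomposition of $B^{r,s}$, and then read off $\varphi_0$ and $\varepsilon_0$ directly from the rank-two pictures.

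The main obstacle is that the weight constraint is not decisive where the relevant classical weight space has multiplicity greater than one, so the rank-two relations must actually be solved rather than merely invoked. I would control this by a case analysis over the non-exceptional types, using the explicit classical branching of $B^{r,s}$ reviewed in Sections~\ref{section.finite.type} and~\ref{section:realization} together with the explicit list of neighbors of the affine node. In every type admitting a Dynkin diagram automorphism that moves $0$ into $\{1,\dots,n\}$ the disambiguation is cleaner, since $f_0$ is then tied to a classical operator and the rank-two constraints become those of a known finite-type crystal; the genuinely delicate case is $A_{2n}^{(2)}$, which has no such automorphism and must be handled directly from the combinatorial model, verifying there that the system of rank-two constraints has a unique solution. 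Finally, once $f_0^{B}$ and the transported operator $\phi\circ f_0^{B^{r,s}}\circ\phi^{-1}$ are both seen to solve the same constrained system, they coincide, so $\phi$ is an $I$-crystal isomorphism; this is exactly the statement strengthening~\cite{FOS:2009} from genuine $U_q'(\g)$-crystals to regular abstract crystals.
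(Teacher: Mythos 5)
Your proposal takes a genuinely different route from the paper, but it has a gap at its core. The engine of your argument is the claim that rank-two regularity is ``rigid'': that once the $i$-arrows and ``the highest weight data'' of each $\{0,i\}$-component are fixed, the $0$-arrows are forced. As stated this is circular: the decomposition of $B$ into $\{0,i\}$-components, and the highest weight data of those components, are not part of the classical structure --- they are only known once the $0$-arrows are known, which is exactly what you are trying to determine. What remains after removing the circularity is precisely the hard combinatorial assertion that the classical structure of $B^{r,s}$ together with regularity pins down $e_0,f_0$; you defer this to ``a case analysis over the non-exceptional types'' and, for $A_{2n}^{(2)}$, to a direct verification that ``the system of rank-two constraints has a unique solution,'' but none of this analysis is carried out. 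This cannot simply be invoked: it is essentially the content of \cite[Proposition~6.1]{OS:2008} and of Sections~5.2 and~6.1 of \cite{FOS:2009}, and even those results take as input the crystal structure for a second full-rank subdiagram ($\{0,2,\ldots,n\}$ or $\{0,1,\ldots,n-1\}$, depending on type), which is strictly stronger information than the rank-two constraints you propose to propagate. As written, the proposal reduces the lemma to an unproven claim at least as strong as the lemma itself.

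The paper avoids all of this by a short reduction that your proposal misses. Since $B$ is isomorphic to $B^{r,s}$ classically, it is finite and hence of level zero, so the affine weight $\wt(b)$ is recoverable from the classical weight $\wt^{\text{fin}}(b)$ (you note the analogous fact about the weight of $f_0$, but only use it as a constraint on targets of $f_0$). Consequently the classical structure already determines the character of $B$ as an $I\setminus\{j\}$-crystal for every $j\in I$, and since finite-type crystals are determined by their characters, it determines the isomorphism class of $B$ as an $I\setminus\{j\}$-crystal. This upgrades the hypothesis ``isomorphic to $B^{r,s}$ as a $\{1,\ldots,n\}$-crystal'' to ``isomorphic as a crystal for the appropriate second full-rank subdiagram,'' at which point the uniqueness theorems of \cite{OS:2008} and \cite{FOS:2009} apply verbatim. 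To salvage your direct approach you would need either to reprove those theorems (a substantial combinatorial undertaking, not a routine propagation) or to find this character-theoretic reduction.
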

 
\begin{proof}
First, recall that any finite type crystal $C$ is uniquely determined by its character
\begin{equation*}
        \ch(C):= \sum_{c \in C} e^{\wt^{\text{fin}}(c)},
\end{equation*}
where $\wt^{\text{fin}}$ denotes the weight as a $\{1,2,\ldots, n\}$-crystal.
Since $B$ is isomorphic to $B^{r,s}$ as a $\{1,2, \ldots ,n\}$-crystal, it is finite and hence of level $0.$
Thus $\wt(b)$ can be recovered from $\wt^{\text{fin}}(b)$. In particular, we can recover the character of
$B$ as an $I \backslash \{ j \}$ crystal for all $j \in I$, and from there recover the isomorphism class
of $B$ as an $I \backslash \{ j \}$-crystal.
 
The lemma now follows by the following uniqueness results from~\cite{OS:2008, FOS:2009}:
\begin{itemize}
\item
By~\cite[Proposition 6.1]{OS:2008} (for most nodes) and \cite{FOS:2009} (for exceptional nodes),
in types $D_n^{(1)}$, $B_n^{(1)}$ and  $A_{2n-1}^{(2)}$, any regular abstract crystal $B$ which
is isomorphic to $B^{r,s}$ as both a $\{ 1,2, \ldots, n \}$-crystal and a $\{ 0,2, \ldots, n \}$-crystal
is isomorphic to $B^{r,s}$ as an affine crystal.
\item
As in~\cite[Sections 5.2 and 6.1]{FOS:2009}, in types $C_n^{(1)}$, $D_{n+1}^{(2)}$ and $A_{2n}^{(2)}$,
any regular abstract crystal $B$ which is isomorphic to $B^{r,s}$ as both a $\{ 1,2, \ldots, n \}$-crystal and
a $\{ 0,1, \ldots, n-1 \}$-crystal is isomorphic to $B^{r,s}$ as an affine crystal.
\end{itemize}
\end{proof}
 
By~\cite[Proposition 3.8]{LOS:2010}, a tensor product $B=B^{r_1, s_1} \otimes \cdots \otimes
B^{r_N, s_N}$ of KR crystals is connected. We refer to such a $B$ as a {\it composite KR crystal}.
As in~\cite{KMN1:1992}, if the factors are all perfect KR crystals of the same level, then
$B=B^{r_1, \ell c_{r_1}} \otimes \cdots \otimes B^{r_N, \ell c_{r_N}}$ is also perfect of level $\ell$.
We refer to such a perfect crystal as a {\it composite KR crystal of level $\ell$}.
 
Explicit combinatorial models for KR crystals of non-exceptional type were constructed in~\cite{FOS:2009},
and will be reviewed in Section~\ref{section:realization}. Two examples are given in Figure~\ref{figure:KR example}.
 
\subsection{Extended affine Weyl group}
Write the null root as $\delta = \sum_{i \in I } a_i \alpha_i$. Let $\theta= \delta-a_0 \alpha_0$. As 
in~\cite[Section 6.4]{Kac:1990}, $\theta$ is a root in the finite type root system corresponding to 
$I \backslash \{ 0 \}$. Following~\cite{HKOTT:2002},
for each $i \in I \backslash \{ 0 \}$, define $c_i = \max(1,a_i/a_i^{\vee})$. It turns out that
$c_i=1$ except for $c_i=2$ for: $\g=B_n^{(1)}$ and $i=n$, $\g=C_n^{(1)}$ and $1\le i\le n-1$,
$\g=F_4^{(1)}$ and $i=3,4$, and $c_2=3$ for $\g=G_2^{(1)}$.
Here we use Kac's indexing of affine Dynkin diagrams from~\cite[Table Fin, Aff1 and Aff2]{Kac:1990}.
Consider the sublattices of $\oP$ given by
\begin{align*}
  M &= \bigoplus_{i\in  I \backslash \{ 0 \}} \Z c_i\alpha_i = \Z \oW \cdot \theta/a_0, \\
  \widetilde M &= \bigoplus_{i\in  I \backslash \{ 0 \}} \Z c_i\omega_i.
\end{align*}
Let $\oW$ be the finite type Weyl group for the Dynkin diagram $I \backslash \{ 0 \}$, which
acts on $\oP$ by linearizing the rules $s_i \lambda = \lambda - \langle \alpha_i^\vee, \lambda
\rangle \alpha_i$.
Clearly $M\subset \widetilde M$ and the action of $\oW$
on $\oP$ restricts to actions on $M$ and $\widetilde M$. Let $T(\widetilde M)$ (resp.
$T(M)$) be the subgroup of $T(\oP)$ generated by the translations $t_\lambda$ by
$\lambda \in\widetilde M$ (resp. $\lambda \in M$).
 
There is an isomorphism \cite[Prop. 6.5]{Kac:1990}
\begin{align} \label{E:Waffiso}
W \cong \oW \ltimes T(M)
\end{align}
as subgroups of $\Aut(\overline{P})$, where $W$ is the affine Weyl group.
Under this isomorphism we have
\begin{align} \label{E:s0}
s_0 = t_{\theta/a_0} s_\theta,
\end{align}
where $s_\theta$ is the reflection corresponding to the root $\theta$. Define $\theta^\vee \in \mathfrak{h}^*$ 
so that $s_\theta(\lambda)= \lambda -  \langle \theta^\vee, \lambda \rangle \theta$.

Define the extended affine Weyl group to be the subgroup of
$\Aut(P)$ given by
\begin{align} \label{E:WXdef}
\widetilde W = \oW \ltimes T(\widetilde M).
\end{align}
Let $C\subset \overline P \otimes_{\Z} \mathbb{R}$ be the fundamental chamber,
the set of elements $\lambda$ such that $\langle \alpha^\vee_i, \lambda \rangle \ge 0$
for all $i\in I \backslash \{ 0 \}$, and $\langle \theta^\vee, \lambda \rangle \leq 1/{a_0}.$ 

Let $\Sigma \subset \widetilde W$ be the subgroup of $\widetilde W$ consisting
of those elements that send $C$ into itself. Then $\widetilde W= W \Sigma$, and in particular
every element $x \in \widetilde W$ can be written uniquely as
\begin{equation}
	x= w \tau
\end{equation}
for some $w \in W$ and $\tau \in \Sigma$.
 
The usual affine Weyl group $W$ is
a normal subgroup of $\widetilde W$, so $\Sigma$ acts on $W$ by
conjugation. Each $\tau\in\Sigma$
induces an automorphism (also denoted $\tau$) of the affine Dynkin diagram $\Gamma$, which is characterized
as the unique automorphism so that:
\begin{align}\label{E:autoW}
\tau s_i \tau^{-1} = s_{\tau(i)} \qquad\text{for each $i\in I$.}
\end{align}
 
\begin{remark}
When $\g$ is of untwisted type, $M\cong Q^\vee$, $\widetilde M \cong P^\vee$,
with the isomorphism $\nu$ given by $c_i\omega_i = \nu(\omega_i^\vee)$, and
$c_i\alpha_i=\nu(\alpha_i^\vee)$ for $i\in I \backslash \{ 0 \}$.
\end{remark}
 
\begin{lemma} \label{Sigma-alt} Fix $\g$ with affine Dynkin diagram $\Gamma$, and let $\tau \in \Aut(\Gamma).$ Then
$\tau \in \Sigma$ if and only if there exists $w_\tau \in \oW$ such that the following diagram commutes
  \[
  \xymatrix{
    I \ar[r]^\iota \ar[d]^\tau & \Delta \ar[d]^{w_\tau} \\
      I \ar[r]_{\iota} & \Delta,
    }
  \]
where $\Delta$ is the underlying finite type root system, and $\iota$ is the map that takes $i$ to $\alpha_i$ for all $i \neq 0$
and takes $0$ to $- \theta$.
\end{lemma}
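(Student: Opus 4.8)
The plan is to reinterpret the combinatorial map $\iota$ geometrically: $\iota(i)$ is the inward-pointing normal root to the $i$-th wall of the fundamental chamber $C$. For $i\neq 0$ the wall $H_i=\{\lambda : \langle\alpha_i^\vee,\lambda\rangle = 0\}$ bounds $C$ on the side $\langle\alpha_i^\vee,\cdot\rangle\ge 0$, so its inward normal direction is $\alpha_i=\iota(i)$; the wall $H_0=\{\lambda:\langle\theta^\vee,\lambda\rangle = 1/a_0\}$ bounds $C$ on the side $\langle\theta^\vee,\cdot\rangle\le 1/a_0$, so its inward normal direction is $-\theta=\iota(0)$. The reflection in $H_i$ is $s_i$, where $s_0=t_{\theta/a_0}s_\theta$ by~\eqref{E:s0}. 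With this dictionary, the commuting square says precisely that the linear part $w_\tau$ carries the inward normal of wall $i$ to the inward normal of wall $\tau(i)$, which is the linearization of the geometric statement that $\tau$ preserves $C$ and permutes its walls according to the diagram automorphism $\tau$.

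For the forward direction, suppose $\tau\in\Sigma\subseteq\widetilde W=\oW\ltimes T(\widetilde M)$, and let $w_\tau\in\oW$ be its image under the projection killing translations, so $\tau$ acts on $\oP\otimes_\Z\mathbb{R}$ as $\lambda\mapsto w_\tau\lambda+\mu$ for some translation by an element of $\widetilde M$. Since $\tau$ fixes $C$ setwise and $\tau s_i\tau^{-1}=s_{\tau(i)}$, the reflection in $\tau(H_i)$ equals $s_{\tau(i)}$, forcing $\tau(H_i)=H_{\tau(i)}$. As an affine isometry sending $H_i$ to $H_{\tau(i)}$ while preserving the common side $C$, its linear part $w_\tau$ sends the inward normal direction of $H_i$ to that of $H_{\tau(i)}$; hence $w_\tau\iota(i)$ is a positive scalar multiple of $\iota(\tau(i))$. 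Because $w_\tau$ preserves root lengths and both $\iota(i)$ and $\iota(\tau(i))$ are roots lying on the same ray, $w_\tau\iota(i)=\iota(\tau(i))$, so the square commutes.

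For the reverse direction, suppose $w_\tau\in\oW$ satisfies $w_\tau\iota=\iota\tau$. If $\tau(0)=0$ then $w_\tau\theta=\theta$ and $w_\tau$ permutes $\{\alpha_i\}_{i\neq 0}$ by $\tau$, so $w_\tau$ fixes every wall setwise, $w_\tau(C)=C$, and $w_\tau\in\oW\subseteq\widetilde W$ already lies in $\Sigma$ and induces $\tau$ (this is the case of the $D_n^{(1)}$ spin-node automorphisms). If $\tau(0)\neq 0$, the relations $w_\tau\alpha_i=\alpha_{\tau(i)}$ (for $i,\tau(i)\neq 0$), $w_\tau\alpha_{\tau^{-1}(0)}=-\theta$ and $w_\tau\theta=-\alpha_{\tau(0)}$ show that $w_\tau$ carries the linear hyperplane underlying $H_i$ to the one underlying $H_{\tau(i)}$. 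I would then set $\sigma:=t_\mu w_\tau$ with $\mu$ the unique translation sending $H_0$ onto $H_{\tau(0)}$; orthogonality to the remaining coroots forces $\mu$ to be a multiple of $\omega_{\tau(0)}$, and the three relations yield $\sigma(H_i)=H_{\tau(i)}$ for every $i\in I$, whence $\sigma(C)=C$ and $\sigma s_i\sigma^{-1}=s_{\tau(i)}$. Thus $\sigma$ induces $\tau$ and lies in $\Sigma$ as soon as $\sigma\in\widetilde W$.

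The main obstacle is exactly this last proviso: showing the translation $\mu$ lands in $\widetilde M=\bigoplus_i\Z c_i\omega_i$, equivalently (in the untwisted case) that $c_{\tau(0)}=1$ so that $\mu=\omega_{\tau(0)}/a_0$ is an allowed translation. This is the only point at which the argument leaves pure linear algebra and sees the arithmetic of the lattice $\widetilde M$ and the marks $a_i$. It is constrained by the data already in hand: $w_\tau\theta=-\alpha_{\tau(0)}$ with $w_\tau$ an isometry makes $\alpha_{\tau(0)}$ the same length as the highest root $\theta$, pinning $\tau(0)$ as a special node, and the two normalizations coming from the walls $H_0$ and $H_{\tau(0)}$ reconcile precisely when the relevant comark is $1$. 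I expect to close this either by invoking the explicit marks and the constants $c_i$ (Figure~\ref{u-table} and~\cite[Table Aff1 and Aff2]{Kac:1990}) node by node, or more uniformly by matching the resulting $\sigma$ against the known identification $\Sigma\cong\widetilde M/M$, which already records which diagram automorphisms occur. The forward direction and the wall-permutation bookkeeping are routine; the lattice/normalization check ensuring $\mu\in\widetilde M$ carries the genuine content.
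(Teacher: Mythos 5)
Your proposal follows essentially the same route as the paper: the forward direction projects $\tau \in \Sigma \subseteq \oW \ltimes T(\widetilde M)$ onto its linear part and reads off the wall permutation, and the reverse direction realizes $\tau$ as $t_{\omega_{\tau(0)}/a_0}\, w_\tau$ (the paper's $t_{\omega_{\tau(0)}} w_\tau$, since $a_0=1$ here) and checks that this element preserves $C$. The arithmetic fact you flag as the remaining obstacle --- that $a_{\tau(0)}=1$, hence $c_{\tau(0)}=1$ and $\omega_{\tau(0)} \in \widetilde M$, whenever $\tau$ is a nontrivial diagram automorphism --- is precisely the fact the paper invokes without proof, and your proposed node-by-node verification from Kac's tables is a legitimate way to close it.
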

 
\begin{proof}
Fix $\tau \in \Sigma$. By~\eqref{E:WXdef}, we can write $\tau =  t_\lambda w$, where
$w \in \oW$, and $\lambda \in P^\vee$.  Then $w$ must send $C$ to a chamber which can be shifted back to $C$, 
from which one can see that $w$ has the desired properties.
 
Now fix $\tau \in \Aut(\Gamma)$. If there is a $w_\tau \in \oW$ making the diagram
commute, then consider the element $t_{\omega_{\tau(0)}} w_\tau\in \widetilde W$. A simple calculation 
shows that $t_{\omega_{\tau(0)}} w_\tau( C)=C$, and that this realizes $\tau$ as an element 
of $\Sigma$. This last step uses the fact that $a_{\tau(0)}$ is always 1 when there is a non-trivial diagram automorphism $\tau$. 
\end{proof}
 
\subsection{Demazure modules and crystals} \label{DC-sec}
In this section $\g$ is an arbitrary symmetrizable Kac-Moody algebra.
Let $\lambda$ be a dominant integral weight for $\g$. Define
\[
        W^\lambda:= \{ w \in W \mid w \lambda = \lambda \}.
\]
Fix $\mu\in W\lambda$, and recall that the $\mu$ weight space in $V(\lambda)$ is
one-dimensional. Let $u_\mu$ be a non-zero element of the $\mu$ weight space in $V(\lambda)$.
Write $\mu=w\lambda$ where $w$ is the shortest element in the coset $w W^\la$.
The {\em Demazure module} is defined to be
$$
V_w(\lambda) := U_q(\g)^{>0}\cdot u_{w(\lambda)},
$$
and the {\em Demazure character} is
\begin{equation*}
        \ch V_w(\lambda) = \sum_\mu \dim(V_w(\lambda)_\mu) e^\mu,
\end{equation*}
where $V_w(\lambda)_\mu$ is the $\mu$ weight space of the Demazure module
$V_w(\lambda)$.

It was conjectured by Littelmann~\cite{Littelmann:1995a} and proven by Kashiwara~\cite{Kashiwara:1993} that 
the intersection of a crystal basis of $V(\lambda)$ with $V_w(\lambda)$ is a crystal basis for $V_w(\lambda)$. 
The resulting subset $B_w(\lambda) \subset B(\lambda)$ is referred to as the {\em Demazure crystal}. 
It has the properties that it is closed under the action of the crystal operators $e_i$ (but not $f_i$), and that 
\begin{equation}
\label{equation:Kashiwara-Demazure}
       \ch V_w(\lambda) = \sum_{b\in B_w(\lambda)} e^{\wt(b)}.
\end{equation}

Define the set
\begin{align} \label{E:fw}
f_w(b) := \{\,f_{i_N}^{m_N}\dotsm f_{i_1}^{m_1}(b)\mid
m_k\in \Z_{\ge0}\},
\end{align}
where $w=s_{i_N}\dotsm s_{i_1}$ is any reduced decomposition
of $w$. By \cite[Proposition 3.2.3]{Kashiwara:1993}, as sets, 
\begin{align} \label{E:fwDem}
B_w(\lambda) = f_w(u_{\lambda}),
\end{align}
independent of the reduced word for $w$.

\begin{example}
Let us consider the Demazure crystal $B_{s_2 s_1}(\omega_1+\omega_2)$ for $\mathfrak{sl}_3$. 
The Demazure crystal is shown with thick vertices and edges, will the rest of the ambient crystal 
$B(\omega_1+\omega_2)$ is shown in thinner lines. Here blue lines show the action of $f_1$ 
and red lines show the action of $f_2$. 
\begin{center}
\begin{tikzpicture}[scale=0.15]

\draw[blue, line width = 0.03cm, ->] (0,12) ..controls (2.5,10.25) and (2.5,10.25)  .. (5,8.5);
\draw[blue, line width = 0.03cm, ->] (5,8) ..controls (8.5,6.25) and (8.5,6.25)  .. (12,4.5);
\draw[blue, line width = 0.03cm, ->] (1.5,4) ..controls (4,2.25) and (4,2.25)  .. (6.5,0.5);
\draw[red, line width = 0.03cm, ->] (12.5,4) ..controls (10,2.25) and (10,2.25)  .. (7.5,0.5);

\draw[red, line width = 0.09cm, ->] (7,16) ..controls (3.75,14.25) and (3.75,14.25)  .. (0.5,12.5);
\draw[red, line width = 0.09cm, ->] (14,12) ..controls (11.5,10.25) and (11.5,10.25)  .. (9,8.5);
\draw[red, line width = 0.09cm, ->] (9,8) ..controls (5.5,6.25) and (5.5,6.25)  .. (2,4.5);\

\draw[blue, line width = 0.09cm, ->] (7,16) ..controls (10.25,14.25) and (10.25,14.25)  .. (13.5,12.5);

\draw (7.6,0) node {\circle*{5}};
\draw (1.5,4) node [circle, fill=black] {};
\draw (13,4) node {\circle*{5}};
\draw (5.8,8) node {\circle*{5}};
\draw (8.5,8) node [circle, fill=black] {};
\draw (0,12) node [circle, fill=black] {};
\draw (14,12) node [circle, fill=black] {};
\draw (7,16) node [circle, fill=black] {};

\end{tikzpicture}
\end{center}
\end{example}

\begin{remark}
We mainly consider the case when $\g$ is affine, and the following Demazure modules: Fix an anti-dominant 
weight $\mu \in \overline{P}$, and write $t_\mu \in \widetilde W$ as $t_\mu = w\tau$, where $w \in W$ and 
$\tau \in \Sigma$. For a dominant $\Lambda \in P$, we consider $V_w(\tau(\Lambda))$ and its crystal 
$B_w(\tau(\Lambda))$. In \cite{FSS:2007} these were denoted by $V_\mu(\Lambda)$ and 
$B_\mu(\Lambda)$, respectively. 
\end{remark}
 
 \begin{remark}
It is well-known~\cite{Demazure:1974,Kumar:1987,Mathieu:1986} that the Demazure
character can be expressed in terms of the Demazure operator $D_i:\Z[P] \to \Z[P]$
\begin{equation*}
        D_i(e^\mu) = \frac{e^{\mu} - e^{\mu-(1+\langle \alpha_i^\vee,\mu\rangle)\alpha_i}}{1-e^{-\alpha_i}},
\end{equation*}
where $\alpha_i$ is a simple root and $\alpha_i^\vee$ the corresponding coroot.
Then for $w=s_{i_N}\cdots s_{i_1}\in W$ a reduced expression
\begin{equation*}
        \ch V_w(\lambda) = D_{i_N} \cdots  D_{i_1} (e^\lambda).
\end{equation*}
\end{remark}
 
\section{Kashiwara-Nakashima tableaux and branching rules}
\label{section.finite.type}
 
\subsection{Kashiwara-Nakashima tableaux} \label{ss:KN}
 
We briefly review a method, due to  Kashiwara and Nakashima \cite{KN:1994}, of
realizing all highest weight crystals of non-exceptional finite type $\g$. The first observation is that many crystals
occur inside high enough tensor powers of the ``standard" crystals shown in
Figure~\ref{standard-crystals}. Many here means all in type $A$ and $C$, but not those
involving spin weights in types $B$ and $D.$
 
\begin{figure}
 
$$\begin{array}{|l|c|}
       \hline
               \text{Type}&\text{Standard crystal $B(\omega_1)$} \\
       \hline
               A_n&\text{\xymatrix@R=1ex{
                       \hbox{$\tab{1}$}\ar[r]^1& \hbox{$\tab{2}$}  \ar[r]^{2} & \cdots  \ar[r]^{n-1} & \hbox{$\tab{n}                          $}  \ar[r]^{n} &\hbox{$\tab{n+1}$}  }} \\
       \hline
               B_n&\text{\xymatrix@R=1ex{
                       \hbox{$\tab{1}$}\ar[r]^1& \cdots \ar[r]^{n-1}  & \hbox{$\tab{n}$} \ar[r]^{n} & \hbox{$\tab{0}$}                         \ar[r]^{n} & \hbox{$\tab{\overline{n}}$}         \ar[r]^{n-1} & \cdots \ar[r]^1&  \hbox{$\tab                                   {\overline{1}}$} } }
                       \\
       \hline
               C_n&\text{\xymatrix@R=1ex{
                       \hbox{$\tab{1}$}\ar[r]^1& \cdots \ar[r]^{n-1}  & \hbox{$\tab{n}$} \ar[r]^{n} &
                       \hbox{$\tab{\overline{n}}$}      \ar[r]^{n-1} & \cdots \ar[r]^1&  \hbox{$\tab                                                   {\overline{1}}$} } }  \\
       \hline
               D_n&\text{\xymatrix@R=1ex{
                       &&&\hbox{$\tab{n}$}\ar[dr]^n\\
                       \hbox{$\tab{1}$}\ar[r]^1&\cdots\ar[r]^{n-2}&\hbox{$\tab{n-1}$}\ar[ur]^{n-1}\ar[dr]_n&&
                       \hbox{$\tab{\overline{n-1}}$}\ar[r]^{n-1}&\cdots\ar[r]^1&\hbox{$\tab{\overline{1}}$}\\
                       &&&\hbox{$\tab{\overline{n}}$}\ar[ur]_{n-1}}} \\
       \hline
\end{array}$$
 
\caption{Standard crystals. The boxes represent the vertices of the crystal, and each arrow labeled $i$ shows the
action of $f_i$.  \label{standard-crystals}}
\end{figure}
 
We call the set of symbols that show up in the boxes of the standard crystal of type
$X_n= A_n, B_n, C_n, D_n$ the type $X_n$ alphabet. Impose a partial order $\prec$ on
this alphabet by saying $x \prec y $ iff $x$ is to the left of $y$ in the presentation of the
standard crystals in Figure~\ref{standard-crystals} (in type $D_n$, the symbols $n$ and
$\overline{n}$ are incomparable).
 
\begin{definition} \label{Lg}
Fix $\g$ of type $X_n$, for $X=A,B,C,D$. Fix a dominant integral weight $\gamma$ for $\g=X_n$.
Write $\gamma = m_1 \omega_1 + m_2 \omega_2 + \cdots + m_{n-1}  \omega_{n-1} + m_n \omega_n$.
Define a generalized partition $\Lambda(\gamma)$ associated to $\gamma$, which is defined case
by case as follows:
\begin{itemize}
\item If $X=A,C$, $\Lambda(\gamma)$ has  $m_i$ columns of height $i$ for each $ 1 \leq i \leq n$;
\item If $X=B$, $\Lambda(\gamma)$ has $m_i$ columns of height  $i$ for each $ 1 \leq i \leq n-1$,
        and $m_n/2$ columns of height $n$;
\item If $X=D$, $\Lambda(\gamma)$ has  $m_i$ columns of each height $i$ for each $ 1 \leq i \leq n-2$,
        $\min(m_{n-1},m_n)$ columns of height $n-1$, and $|m_n-m_{n-1}|/2$ columns of height $n$.
        Color columns of height $n$ using color 1 if $m_n> m_{n-1}$ and color 2 if      $m_n<m_{n-1}$.
\end{itemize}
We use French notation for partitions here, where we adjust the columns at the bottom.
In cases where the above formulas involve a fractional number $x$ of columns at some height, we denote
this by putting $\lfloor x \rfloor$ columns in addition to a single column of half width. Notice that this can only
happen for columns of height $n$, and at worst we get a single column of width $1/2$.
\end{definition}
 
\begin{definition} \label{non-spin:def}
Fix a dominant integral weight $\gamma$. We say $\gamma$ is a \textit{non-spin weight} if
\begin{itemize}
\item  In types $A_n$ and $C_n$, no conditions.
 
\item In type $B_n$,  $\Lambda(\gamma)$ does not contain any column of width $1/2$ (or equivalently, $m_n$ is even).
 
\item In type $D_n$, $\Lambda(\gamma)$ has no columns of height $n$ (or equivalently, $m_{n-1}=m_n$).
\end{itemize}
\end{definition}
 
In the case when $\Lambda=\Lambda(\gamma)$ does not contain a column of width
$1/2$, the highest weight crystal $B(\gamma)$ embeds in the $M$-th tensor power of
$B(\omega_1)$, where $M$ is the number of boxes in the partition $\Lambda$. Furthermore, the
image of $B(\gamma)$ is contained in the set of $ x_M  \otimes \cdots \otimes x_1$ such, when
entered into the Young diagram $\Lambda$ in order moving up columns and right to left
(see Figure~\ref{make-a-tableau}), the result is weakly increasing along rows and, if you ignore
the symbols $0$ in type $B$ and $n, \bar n$ in type $D$, also strictly increasing up columns. We refer to fillings 
which occur in the image of $B(\gamma)$ as Kashiwara--Nakashima (KN) tableaux. Precise conditions describing 
KN tableaux are given in \cite{KN:1994}. The crystal structure on the set of KN tableaux is inherited from the crystal 
structure on $B(\omega_1)^{\otimes M}.$ Modifications of this construction dealing with spin weights are given 
in~\cite{KN:1994}.
 
\begin{remark}
The above construction of $B(\lambda)$ in type $D_n$ goes through without modification in the case $n=3$,
when $D_3 \cong A_3$. Thus we have two realizations of the crystal in this case.
\end{remark}
 
\begin{figure}
\setlength{\unitlength}{0.5cm}
\begin{center}
 
\begin{picture}(22,3)
\put(1,1){\hbox{$\tab{\overline{4}}$}}
\put(2.25, 1.0){$\otimes$}
\put(3,1){$\tab{3}$}
\put(4.25, 1.0){$\otimes$}
\put(5,1){$\tab{1}$}
\put(6.25, 1.0){$\otimes$}
\put(7,1){$\tab{\overline{3}}$}
\put(8.25, 1.0){$\otimes$}
\put(9,1){$\tab{2}$}
\put(10.25, 1.0){$\otimes$}
\put(11,1){$\tab{\overline{3}}$}
\put(12.25, 1.0){$\otimes$}
\put(13,1){$\tab{3}$}
\put(15.5,1){$\longrightarrow$}
 
\put(18,0){
\begin{picture}(3, 3)
\put(0,0){\line(0,1){3}}
\put(0.05,0){\line(0,1){3}}
 
\put(1,0){\line(0,1){3}}
\put(1.05,0){\line(0,1){3}}
 
\put(2,0){\line(0,1){2}}
\put(2.05,0){\line(0,1){2}}
 
\put(3,0){\line(0,1){2.05}}
\put(3.05,0){\line(0,1){2.05}}
 
\put(0,0){\line(1,0){3}}
\put(0,0.05){\line(1,0){3}}
 
\put(0,1){\line(1,0){3}}
\put(0,1.05){\line(1,0){3}}
 
\put(0,2){\line(1,0){3.05}}
\put(0,2.05){\line(1,0){3.05}}
 
\put(0,3){\line(1,0){1.05}}
\put(0,3.05){\line(1,0){1.05}}
 
\put(0.35,0.3){$1$}
\put(1.35,0.3){$2$}
\put(2.35,0.3){$3$}
 
\put(0.35,1.3){$3$}
\put(1.35,1.3){$\overline{3}$}
\put(2.35,1.3){$\overline{3}$}
 
\put(0.35,2.3){$\overline{4}$}
 
\end{picture}
}
 
\end{picture}
\end{center}
 
\caption{An element of $B(\omega_3 + 2 \omega_2)$ of type $C_4$
as realized inside the tensor product $B(\omega_1)^{\otimes 7}$ of seven copies of the standard
crystal, and the corresponding tableau.
For explicit conditions on which tableaux appear in this correspondence, see~\cite{KN:1994}.
\label{make-a-tableau}}
\end{figure}
 
\subsection{Branching rules and $\pm$ diagrams}
\label{pm-diagrams}
We now describe branching rules for certain representations of $X_n$, where $X=B,C$ or $D$.
 
\begin{definition} \label{pm-def}
Fix a non-spin weight $\gamma$ (see Definition \ref{non-spin:def}), and set $\Lambda= \Lambda(\gamma)$. A
{\it $\pm$ diagram} $P$ with outer shape $\Lambda$ is a sequence of partitions
$\la\subseteq  \mu \subseteq \La$ such that $\La/\mu$ and $\mu/\la$ are horizontal strips
(i.e. every column contains at most one box). We depict a $\pm$ diagram by filling $\mu/\la$
with the symbol~$+$ and those of $\La/\mu$ with the symbol~$-$. We make the additional type dependent
modifications and restrictions:
\begin{enumerate}
\item
In type $C_n$, $\lambda$ has no columns of height $n$.
\item
In type $B_n$, $\lambda$ has no columns of height $n$. Additionally, we allow the
$\pm$ diagram to contain an extra symbol $0$. There can be at most one $0$, this must occur at
height $n$, must be to the right of all $+$ at height $n$, and to the left of all $0$ at height $n$,
and must be the only symbol in its column.
\item \label{pm-D}  In type $D_n$, columns of $\lambda$ of height $n-1$ are either all colored 1 or all colored 2.
\end{enumerate}
We denote by $\os(P)=\Lambda$ the outer shape of $P$ and by $\is(P)=\lambda$ the inner
shape of $P$.
\end{definition}
 
Embed $X_{n-1}$ into $X_n$ by removing node $1$ from the Dynkin diagram of type $X_n$. In the
special cases of $B_2, C_2$ and $D_3$, removing the node $1$ gives a Dynkin
diagram of a different type ($A_1, A_1,$ and $A_1 \times A_1$, respectively). By
abuse of notation, we use the symbol $X_{n-1}$ to mean this new diagram in these special cases.
Although these special cases are not explicitly mentioned in~\cite{FOS:2009}, the proof of
Theorem~\ref{branching} goes through without change.
 
\begin{theorem} (see \cite[Section 3.2]{FOS:2009}) \label{branching}
Fix a non-spin dominant integral weight $\gamma$ (see Definition~\ref{non-spin:def}) for $\g=X_n$, and let
$\Lambda = \Lambda(\gamma)$ (see Definition~\ref{Lg}). Then there is a
bijection between $\pm$ diagrams $P$ with outer shape $\Lambda$ and $X_{n-1}$
highest weight elements in $B(\Lambda)$. This can be realized in terms of KN tableaux
by the following algorithm.
\begin{enumerate}
        \item For each $+$ at height $n$,  fill that column with $12\ldots n$.
        \item Replace each $-$ with a $\ol{1}$ and, if there is a $0$ in the $\pm$ diagram, place a
        $0$ in that position of the tableaux.
        \item Fill the remainder of all columns by strings of the form
        $23\ldots k$.
        \item Let $S$ be the multi-set containing the heights of all the $+$ in $P$ of height less than
        $n$. Move through the columns of $\Lambda$  from top to bottom, left to right, ignoring the
        columns $12\ldots n$ of step (i), modifying the tableaux as follows.
        Each time you encounter a $\ol{1}$, replace it with  $\ol{h+1}$, where $h$ is the largest
        element of $S$, and delete $h$ from $S$. Each time you encounter a $2$ which is at the bottom
        of a column, replace the string
        $23\ldots k$ in that column by $12\ldots h\; h+2\ldots k$, where $h$ is the largest element
        of $S$, and remove $h$ from $S$. Once $S$ is empty, stop.
        \item In type $D_n$, if the $\pm$ diagram has empty columns of height $n-1$ colored 2,
        change all occurrences of $n$ at height $n-1$ to $\overline{n}$.
\end{enumerate}
Furthermore, two $\pm$ diagrams $P$ and $P'$ correspond to $X_{n-1}$ highest weight vectors of the same
$X_{n-1}$ weight if and only if $\is(P)=\is(P')$.
\qed
\end{theorem}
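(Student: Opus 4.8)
The plan is to treat the three assertions of the theorem in turn: that the stated algorithm sends a $\pm$ diagram $P$ to a genuine $X_{n-1}$-highest weight KN tableau, that this assignment is a bijection, and that the $X_{n-1}$-weight of the image depends only on $\is(P)$. Throughout I use that the $X_{n-1}$-highest weight elements of $B(\Lambda)$ are exactly those $b$ with $e_i(b)=0$ for all $i\in\{2,\ldots,n\}$, together with the key structural observation that, as an $X_{n-1}$-crystal, the standard crystal decomposes as $B(\omega_1)|_{X_{n-1}}=\{1\}\sqcup B'(\omega_1)\sqcup\{\bar 1\}$, where $B'(\omega_1)$ is the standard $X_{n-1}$-crystal on the alphabet $\{2,\ldots,\bar 2\}$ (together with $0$ in type $B$ and $n,\bar n$ in type $D$) and the two singletons $\{1\}$, $\{\bar 1\}$ carry no $X_{n-1}$-arrows. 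Via the embedding $B(\Lambda)\hookrightarrow B(\omega_1)^{\otimes M}$ and the column-reading word, this reduces every computation to the signature (bracketing) rule for $e_i$ on a tensor product.

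First I would verify the forward direction: that the tableau $T(P)$ produced by steps (i)--(v) is a valid KN tableau and is $X_{n-1}$-highest weight. Validity amounts to checking that the fillings remain weakly increasing along rows and strictly increasing up columns (ignoring $0$ in type $B$ and $n,\bar n$ in type $D$); steps (i)--(iii) place the evident standard strings, and the content of step (iv) is exactly the local modification that keeps these inequalities intact while absorbing each low-height $+$. For the highest weight property I would compute, for each $i\in\{2,\ldots,n\}$, the $i$-signature of the column word of $T(P)$ and show it contains no unbracketed $e_i$-bracket. The natural way to organize step (iv) is by induction on $|S|$, processing the $+$'s one at a time in order of decreasing height exactly as the algorithm prescribes: I would track an invariant asserting that after consuming a height-$h$ element of $S$ (turning some $\bar 1$ into $\bar{h{+}1}$, or some bottom $2$-string into $12\ldots h\,h{+}2\ldots k$) the partially modified tableau is still a valid KN tableau annihilated by $e_2,\ldots,e_n$.

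Next I would establish bijectivity. Injectivity follows by reversing the algorithm: from $T(P)$ one reads off the columns $12\ldots n$ (recovering the height-$n$ $+$'s), the entries $\bar 1$ and the symbol $0$ (recovering the $-$'s and the $0$), and then recovers the low-height $+$'s and the remaining structure by locating the barred entries $\bar{h{+}1}$ and the modified bottom strings, undoing step (iv) in the reverse (increasing-height) order. For surjectivity I would argue by a weight count rather than by a direct inverse construction on arbitrary highest weight tableaux: compute the $X_{n-1}$-weight $\wt T(P)$ and show it is a function of $\la:=\is(P)$ alone, equal to the $X_{n-1}$-dominant weight $\gamma'(\la)$ attached to $\la$ by the $X_{n-1}$-analogue of Definition~\ref{Lg}. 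Granting this, the number of $\pm$ diagrams with inner shape $\la$ equals the multiplicity of the irreducible $X_{n-1}$-crystal $B(\gamma'(\la))$ predicted by the classical branching rule for $X_n\downarrow X_{n-1}$; since the algorithm injects $\pm$ diagrams into $X_{n-1}$-highest weight vectors and the counts agree, the map is onto.

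The final \emph{furthermore} assertion is then immediate from the weight computation above: $\wt T(P)$ depends only on $\is(P)$, and the assignment $\la\mapsto\gamma'(\la)$ is injective on inner shapes, so $T(P)$ and $T(P')$ have equal $X_{n-1}$-weight precisely when $\is(P)=\is(P')$. I expect the main obstacle to be the analysis of step (iv): unlike the purely local steps (i)--(iii), the multiset procedure modifies entries non-locally (top-to-bottom, left-to-right, always consuming the largest available height), so the delicate part is proving that this greedy pairing both preserves the KN column and row inequalities and yields a vector with trivial $i$-signature for every $i\ge 2$ --- equivalently, that each low-height $+$ is placed so as to be invisible to $e_2,\ldots,e_n$. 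Verifying the inductive invariant through this step, and confirming it is consistent across the low-rank degenerations $B_2,C_2,D_3$, is where the real work lies.
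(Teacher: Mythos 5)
First, a point of comparison: the paper itself does not prove this theorem. It is quoted from \cite[Section 3.2]{FOS:2009} and stated without proof, and the only original contribution here is the remark preceding the statement that the degenerate cases $B_2$, $C_2$, $D_3$ --- where deleting node $1$ changes the type --- require no change to the argument of \cite{FOS:2009}. So your proposal is competing with a citation, and with the proof in \cite{FOS:2009}, which proceeds by the same kind of signature-rule verification on column reading words that you outline. Your overall architecture is sound: the decomposition $B(\omega_1)|_{X_{n-1}}=\{1\}\sqcup B'(\omega_1)\sqcup\{\ol{1}\}$ is the right structural fact, and your surjectivity-by-counting idea (compare the number of $\pm$ diagrams of each inner shape with the classical branching multiplicity for $X_n\downarrow X_{n-1}$, then invoke injectivity of the algorithm) is a legitimate alternative to exhibiting an explicit inverse on arbitrary highest weight tableaux; the classical branching rules are independent results, so there is no circularity in using them.

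Nevertheless there are two genuine gaps. The central one you name yourself and then defer: the claim that step (iv) --- the greedy, non-local consumption of the multiset $S$ --- preserves the KN row and column conditions and yields an element killed by $e_2,\dots,e_n$ \emph{is} the substance of the theorem, and announcing an inductive invariant without carrying out the case analysis (the $\ol{1}\mapsto\ol{h+1}$ replacement versus the bottom-of-column replacement, their interaction with columns already processed, and why the greedy choice of the largest $h$ never produces a string longer than its column or a spurious column $12\ldots n$) leaves the proof without its core. The second gap is in the counting step: ``the counts agree'' is essentially immediate only in type $C_n$, where $\pm$ diagrams are literally the pairs of nested horizontal strips appearing in the symplectic branching rule. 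In type $B_n$ the diagrams carry the extra $0$ symbol, in type $D_n$ the inner shapes carry colorings, and the weights $\gamma'(\lambda)$ can be spin weights of $X_{n-1}$ even though $\gamma$ is non-spin; matching the decorated diagram count against the branching multiplicities, and verifying injectivity of $\lambda\mapsto\gamma'(\lambda)$ with these decorations included, is nontrivial bookkeeping that the proposal asserts rather than performs.
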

 
\begin{remark}
Definition \ref{pm-def} part \eqref{pm-D} differs from the statement in~\cite{FOS:2009}, since in that paper there were
no colorings for columns of height $n-1$. These colorings are needed, as can be seen by considering the branching
rules for $B(\omega_{n-1}+ \omega_n)$ for $D_4$:
\begin{equation*}
\tableau[sbY]{3|2|1} \raisebox{0.5cm}{$\;\leftrightarrow\;$} \tableau[sbY]{+| | |} \qquad \quad
\tableau[sbY]{\overline{1}|3|2} \raisebox{0.5cm}{$\;\leftrightarrow\;$} \tableau[sbY]{-| | |} \qquad \quad
\tableau[sbY]{\overline{3}|3|2} \raisebox{0.5cm}{$\;\leftrightarrow\;$} \tableau[sbY]{-| + | |} \qquad \quad
\tableau[sbY]{4|3|2} \raisebox{0.5cm}{$\;\leftrightarrow\;$} \tableau[sbY]{| | |} \qquad \quad
\tableau[sbY]{\overline{4}|3|2} \raisebox{0.5cm}{$\;\leftrightarrow\;$} \tableau[sbY]{| | |}
\end{equation*}
where the fourth $\pm$ diagram is considered of color 1 and the fifth of color 2.
\end{remark}
 
\begin{definition} \label{definition.KRw}
Let $\gamma$ be an integral highest weight of type $X_n$ and write
$\gamma = m_1 \omega_1 + m_2 \omega_2 + \cdots + m_{n-1}  \omega_{n-1} + m_n \omega_n$.
We call $\gamma$ a {\it case (AUT) weight} if it satisfies:
\begin{itemize}
\item If $X=C$, no conditions;
\item If $X=B$, assume $m_n=0$;
\item If $X=D$, $m_n=m_{n-1}=0$.
\end{itemize}
Note that in all these cases, $\Lambda(\gamma)$ is an ordinary partition.
\end{definition}
 
\begin{remark}
The term ``case (AUT)" in Definition~\ref{definition.KRw} is motivated by the fact that these are exactly the
classical highest weights that appear in case (AUT) KR crystals, as defined in Section~\ref{section:realization}
below.
\end{remark}
 
\begin{remark} All case (AUT) weights are non-spin, as in Definition \ref{non-spin:def}. Furthermore, if one starts 
with a case (AUT) weight $\gamma$, then the $X_{n-1}$ weights which appear in the decomposition from 
Theorem~\ref{branching} will all be non-spin weights, and the $X_{n-1}$ highest weight $\gamma'$ corresponding 
to the $\pm$ diagram $P$ will satisfy $\Lambda(\gamma')=\is(P).$
\end{remark}
 
\subsection{Nested $\pm$ diagrams}
 
If $\gamma$ is a case (AUT) weight, $n \geq 3$ in type $B_n, C_n$, and $n \geq 4$ in type $D_n$,
it follows immediately from Section~\ref{pm-diagrams} that the branching rule from $X_n$ to
$X_{n-2}$ can be described using pairs of $\pm$ diagrams $P$ and $p$ with $\os(P) = \Lambda(\gamma)$ and
$\is(P) = \os(p)$. We call such pairs of $\pm$ diagrams \textit{nested}.
The following result from~\cite{S:2008} gives an explicit description of  the action of $e_1$ on an
$X_{n-2}$ highest weight vector in terms of pairs of $\pm$ diagrams.
Since $e_1$ commutes with all $e_j$ for $j \geq 3$, this completely describes $e_1$.
 
Pair off the symbols in $(P,p)$ according to the rules
\begin{itemize}
\item Successively run through all $+$ in $p$ from left to right and, if possible, pair it with the
leftmost yet unpaired $+$ in $P$ weakly to the left of it.
\item Successively run through all $-$ in $p$ from left to right and, if possible, pair it with the
rightmost yet unpaired $-$ in $P$ weakly to the left.
\item Successively run through all yet unpaired $+$ in $p$ from left to right and, if possible,
pair it with the leftmost yet unpaired $-$ in $p$.
\end{itemize}
 
Notice that a $\pm$ diagram is uniquely determined by its outer (or inner) shape along with the number of each
symbol $+$, $-$, and $0$ on each row. Similarly, a pair $(P,p)$ of nested $\pm$ diagrams is uniquely determined
by its intermediate shape $\ms(P,p)= \os(p) = \is(P)$, along with the data of how many of each symbol is in each
row of $P$ and of $p$.
Since $e_1$ commutes with $e_j$ for all $j \geq 3$, the following gives a complete description of the action of
$e_1$ on the crystal:
\begin{lemma} \cite[Lemma 5.1]{S:2008} \label{S5.1}
Let $\gamma$ be a case (AUT) weight and fix a pair $(P,p)$ of nested $\pm$ diagrams such that
$\os(P) = \Lambda(\gamma)$. Let $b$ be the corresponding $X_{n-2}$ highest weight vector
in $B(\gamma)$. If there are no unpaired $+$ in $p$ and no unpaired $-$ in $P$, then $e_1(b)=0$.
Otherwise, $e_1(b)$ is the $X_{n-2}$ highest weight element corresponding to the pair of $\pm$ diagrams
$(P',p')$ described as follows.
 
If there is an unpaired $+$ in $p$, let $k$ be the height the rightmost unpaired $+$ in $p$.
\begin{itemize}
\item If there is a $-$ directly above the chosen $+$ in $p$, then $\ms(P,p)/\ms(P',p')$ is a single box at height $k+1$.
All rows of $P$ and $p$ have the same number of each symbol except: There is one more + in $P$ at height $k+1$;
there is one less $-$ in $p$ at height $k+1$;  there is one less $+$ and one more $-$ in $p$ at height $k$.
 
\item Otherwise, $\ms(P,p)/\ms(P',p')$ is a single box at height $k$.  All rows of $P$ and $p$ have the same number
of each symbol except: There is one more $+$ in $P$ at height $k$; there is one less $+$ in $p$ at height $k$.
\end{itemize}
 
\noindent  Otherwise let $k$ be the height of the leftmost unpaired $-$ in $P$.
\begin{itemize}
\item If there is a $+$ directly below the chosen $-$ in $P$, then $\ms(P',p')/\ms(P,p)$ is a single box at height $k-1$.
All rows of $P$ and $p$ have the same number of each symbol except: There is one less $-$ and one more $+$ in 
$P$ at height $k$; there is one less $+$ in $P$ at height $k-1$;  there is one more $-$ in $p$ at height $k-1$.
 
\item Otherwise, $\ms(P',p')/\ms(P,p)$ is a single box at height $k$. All rows of $P$ and $p$ have the same number
of each symbol except: There is one less $-$ in $P$ at height $k$; there is one more $-$ in $p$ at height $k$.
 
\end{itemize}
In type $B$, in every case, the number of $0$s in $p$ remains unchanged ($P$ cannot contain $0$ since
we are assuming that $\gamma$ is in case (AUT)).
\end{lemma}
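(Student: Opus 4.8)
The plan is to compute $e_1(b)$ by passing to an explicit tableau realization, applying the signature rule, and then translating the resulting change of a single box back into the combinatorics of the pair $(P,p)$. Since $e_1$ commutes with $e_j$ and $f_j$ for all $j\ge 3$, it preserves the set of $X_{n-2}$ highest weight vectors, so it suffices to describe its action on each such $b$, and the claimed formula must be matched against a direct computation. First I would use Theorem~\ref{branching} twice --- once with the data of $P$ to produce the $X_{n-1}$ highest weight tableau, and once with $p$ to refine it to the $X_{n-2}$ highest weight tableau $T$ representing $b$ --- thereby pinning down the exact entries of $T$ in terms of the $+$, $-$ (and, in type $B$, $0$) symbols of $P$ and $p$ and the letters assigned to them in steps (i)--(v) of that algorithm.

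The operators $e_1$ and $f_1$ move only the letters $1\leftrightarrow 2$ and $\ol 2\leftrightarrow\ol 1$, so the next step is to form the column reading word of $T$ and record its $1$-signature, where a box $1$ or $\ol 2$ carries a $+$ and a box $2$ or $\ol 1$ carries a $-$ (as dictated by $\varphi_1,\varepsilon_1$ in the standard crystal). After cancelling adjacent $-\,+$ pairs according to the tensor product rule recalled above, $e_1$ changes the single box attached to a distinguished uncancelled sign. The heart of the argument is the purely combinatorial claim that, under the dictionary sending each symbol of $P$ and $p$ to the letter the branching algorithm assigns to it, the three-step pairing stated before the lemma reproduces this signature cancellation, and that the distinguished uncancelled box is the one corresponding to the rightmost unpaired $+$ of $p$ --- or, failing that, the leftmost unpaired $-$ of $P$ --- which fixes the height $k$. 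The subtlety here is that a $+$ or $-$ symbol of $P$ and $p$ does \emph{not} map to a like-named signature sign, so the ``pairing equals cancellation'' statement genuinely requires tracking which letter each symbol produces and where it sits in the reading word.

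The main obstacle, and the source of the four sub-cases, is the bookkeeping of how changing that one entry redistributes symbols between $P$ and $p$ and alters $\ms(P,p)$. The dichotomy in each branch --- whether there is a complementary symbol directly above (resp.\ below) the chosen box --- is exactly the question of whether the modified letter crosses the intermediate shape $\is(P)=\os(p)$: when it does, a symbol must migrate between $p$ and $P$ and the box of $\ms$ that changes sits at height $k\pm 1$ rather than $k$, which accounts for the simultaneous flip of a $+$/$-$ in the two diagrams. I would verify this by a local analysis of the two rows at heights $k$ and $k\pm 1$, checking in each configuration that the redistribution of symbols and the change of $\ms(P,p)$ match the stated formulas. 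Finally I would dispatch the type-dependent points directly: the letter $0$ in type $B$ is never moved by $e_1$, so the count of $0$'s in $p$ is preserved; $P$ carries no $0$ because $\gamma$ is a case (AUT) weight (Definition~\ref{definition.KRw}); and the type $D$ coloring of height-$(n-1)$ columns concerns the letters $n,\ol n$, which $e_1$ does not touch, so it is preserved and plays no role. Hence a single uniform computation covers $X=B,C,D$.
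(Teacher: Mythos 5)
Note first that this paper never proves Lemma~\ref{S5.1}: it is imported verbatim, with citation, from \cite[Lemma 5.1]{S:2008}, and is used downstream as a black box (in the proofs of Lemmas~\ref{goode-D} and~\ref{goode-C}). So the only proof to compare against is the one in that reference, and your plan follows essentially the same route taken there: realize the $X_{n-2}$ highest weight vector $b$ as a Kashiwara--Nakashima tableau via the branching algorithm of Theorem~\ref{branching}, observe that $e_1$ moves only the letters $1\leftrightarrow 2$ and $\overline{2}\leftrightarrow\overline{1}$, compute $e_1$ by the signature rule on the reading word, and translate the single changed box back into $\pm$ diagram data. Your preliminary reductions are also correct: $e_1$ preserves $X_{n-2}$ highest weight vectors since it commutes with $e_j,f_j$ for $j\geq 3$; the letters $1,\overline{2}$ carry $\varphi_1$-signs and $2,\overline{1}$ carry $\varepsilon_1$-signs; the four cases do correspond to whether the changed letter crosses the intermediate shape; and the type $B$ symbol $0$ and the type $D$ coloring are untouched by $e_1$, with $P$ containing no $0$ because $\gamma$ is a case (AUT) weight.

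That said, as written your proposal is an outline of that proof rather than a proof. The step you yourself call the heart of the argument --- that the three-step pairing rule coincides with signature cancellation, with the surviving sign located at the rightmost unpaired $+$ of $p$, or failing that the leftmost unpaired $-$ of $P$ --- together with the row-by-row bookkeeping in the four cases, \emph{is} the entire content of the lemma, and you assert it with a promise of local verification rather than carry it out. Relatedly, ``apply Theorem~\ref{branching} twice'' conceals a genuine step: the second application, with $p$, takes place in the shifted alphabet $\{2,3,\dots\}$ inside an abstract copy of the $X_{n-1}$-crystal $B(\is(P))$, and to know where the letters produced by the symbols of $p$ sit in the reading word of $T$ relative to those produced by $P$ one must make the embedding of that crystal into $B(\gamma)$ explicit; this relative order is exactly what makes pairing match cancellation, so it cannot be waved through. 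Executing these two steps is what the argument in \cite{S:2008} consists of; your strategy is the right one, but the central verification remains open in your write-up.
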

 
\begin{remark}
In principle, one could build a new combinatorial model for type $B_n$, $C_n$, $D_n$ crystals using nested
$\pm$ diagrams, where the crystal operators would be given by Lemma~\ref{S5.1}. That is, the tableaux would
consist of $n-1$ nested $\pm$ diagrams in type $B_n$, $C_n$ and $n-2$ nested $\pm$ diagrams in type
$D_n$, along with some extra data recording an element in a representation of $A_1$ (or $A_1 \times A_1$ in type
$D_n$). This would have some advantages over KN tableaux, in that branching rules would be more readily visible.
Such tableaux may also be more convenient for working with KR crystals.
\end{remark}

\section{Realizations of KR crystals}
\label{section:realization}
 
Explicit combinatorial models for KR crystals $B^{r,s}$ for the non-exceptional types
were constructed in~\cite{FOS:2009}. In this section we recall and prove some properties
that we need for the definition of the energy function in Section~\ref{section.energy} and
Lemma~\ref{prime-contribution}.
 
The results are presented for three different cases:
\begin{enumerate}
 
\item[(IRR)] \label{case-A}
The classically irreducible KR crystals $B^{r,s}$:\newline
\begin{tabular}{ll}
$A_n^{(1)}$ &  $1\le r\le n$\\
$D_n^{(1)}$ &  $r=n-1,n$\\
$D_{n+1}^{(2)}$ & $r=n$\\
$C_n^{(1)}$ &   $r=n$
\end{tabular}
 
\item[(AUT)] \label{case-B}
KR crystals $B^{r,s}$ constructed via Dynkin automorphisms:\newline
\begin{tabular}{ll}
$D_n^{(1)}$ & $1\le r \le n-2$\\
$B_n^{(1)}$ & $1\le r\le n-1$\\
$A_{2n-1}^{(2)}$ & $1\le r\le n$
\end{tabular}
 
\item[(VIR)] \label{case-C}
The remaining virtually constructed KR crystals:\newline
\begin{tabular}{ll}
$B_n^{(1)}$ &   $B^{n,s}$\\
$C_n^{(1)}$  &   $B^{r,s}$ for $1\le r<n$\\
$D_{n+1}^{(2)}$ &  $B^{r,s}$ for $1\le r<n$\\
$A_{2n}^{(2)}$ &  $B^{r,s}$ for $1\le r\le n$.
\end{tabular}
\end{enumerate}
In Section~\ref{class-decomp} we present the classical decomposition of the KR crystals in the various cases.
Case (IRR) is  discussed in Section~\ref{caseIRR:section}. This case is well-understood, and we simply state
the facts we need. Case (AUT) is handled in Section~\ref{caseAUT:section}. This time we discuss the realizations
in more detail, and prove a technical lemma. Case (VIR) is handled in Sections~\ref{vir1}, \ref{vir2} and~\ref{vir3}.
Following~\cite{FOS:2009}, we realize these KR crystals as virtual crystals, using a combination of the similarity
method of Kashiwara \cite{Kashiwara:1996} and the technique of virtual crystals developed
in~\cite{OSS:2003,OSS:2002,FOS:2009}. We refer to crystals obtained using both of these methods as virtual
crystals. 
 
\subsection{Classical decompositions of KR crystals} \label{class-decomp}
Set
\begin{equation} \label{diamond-eq}
\begin{split}
        \diamond = \begin{cases}
        \emptyset & \text{for type $A_n^{(1)}$ and $1\le r\le n$}\\
        & \text{for types $C_n^{(1)}, D_{n+1}^{(2)}$ and $r=n$}\\
        & \text{for type $D_n^{(1)}$ and $r=n-1,n$}\\
        \text{vertical domino}         & \text{for type $D_n^{(1)}$ and $1\le r\le n-2$}\\
        & \text{for types $B_n^{(1)}$, $A_{2n-1}^{(2)}$ and $1\le r \le n$}\\ 
        \text{horizontal domino}    & \text{for types $C_n^{(1)}$, $D_{n+1}^{(2)}$ and $1\le r<n$}\\
        \text{box}                              & \text{for type $A_{2n}^{(2)}$ and $1\le r \le n$.}
        \end{cases}
\end{split}
\end{equation}
Then every $B^{r,s}$ decomposes as a classical crystal as
\begin{equation} \label{KR-decomp}
        B^{r,s} \cong \bigoplus_{\lambda} B(\lambda),
\end{equation}
where the sum is over those $\lambda$ such that $\Lambda(\lambda)$ (see Definition \ref{Lg}) can be obtained 
from the rectangle  $\Lambda(s\omega_r)$ by removing some number of $\diamond$, each occurring with multiplicity 1. 
In the untwisted case this was obtained by Chari~\cite{Chari:2001}. In the
twisted case, it was conjectured in~\cite[Appendix A]{HKOTT:2002} and proven by Hernandez
(see~\cite[Section 5]{Hernandez:2010}).
 
\subsection{Classically irreducible KR crystals (case (IRR))} \label{caseIRR:section}
 
As a $\{1,2,\ldots,n\}$-crystal, $B^{r,s} \cong B(r\omega_s).$
For example, the realization of the KR crystal $B^{r,s}$ of type $A_n^{(1)}$ is well-known in
terms of rectangular Young tableaux of shape $(s^r)$. We refer the reader to
e.g.~\cite[Section 4.1]{FOS:2009} for details.
The construction of the other irreducible KR crystals can be found in~\cite[Section 6]{FOS:2009}.
We only need the following fact, which follows immediately from the explicit models.
 
\begin{lemma} \label{lemma:irreducible}
Let $B^{r,s}$ be a KR crystal of type $\g$ for one of the cases in case (IRR).
Then for all $b \in B^{r,s}$, $\varepsilon_0(b) \leq s$.
\end{lemma}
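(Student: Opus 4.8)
The plan is to prove the bound $\varepsilon_0(b) \le s$ for each KR crystal $B^{r,s}$ in case (IRR) by using the explicit classical realizations from~\cite[Section 6]{FOS:2009} together with the observation that, for these crystals, the $0$-arrows are governed by a simple combinatorial rule on the extreme entries of the associated tableaux. Recall that $\varepsilon_0(b) = \max\{m \mid e_0^m(b) \neq 0\}$, so the statement is equivalent to saying that one can apply $e_0$ at most $s$ times before reaching $0$. Since in case (IRR) we have $B^{r,s} \cong B(s\omega_r)$ as a $\{1,\dots,n\}$-crystal and the affine node $0$ attaches to the finite diagram in a controlled way (acting, in effect, like a reflection involving the extreme letters), I expect $\varepsilon_0$ to be read off directly from the shape or the entries of the KN tableau of height $r$ and width $s$.

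The key steps, in order, are as follows. First I would treat type $A_n^{(1)}$ separately, where $B^{r,s}$ is realized by rectangular tableaux of shape $(s^r)$ and $e_0 = e_n \circ (\text{promotion-type twist})$; here $\varepsilon_0(b)$ counts a specific type of entry in a single row, so it is visibly bounded by the width $s$. Second, for the remaining cases ($D_n^{(1)}$ with $r=n-1,n$; $D_{n+1}^{(2)}$ with $r=n$; $C_n^{(1)}$ with $r=n$), I would invoke the explicit description of $e_0$ from the FOS realization. In each of these the action of $e_0$ only modifies entries near the top of the crystal's standard-tableau presentation, and $\varepsilon_0(b)$ equals the number of positions eligible for such a modification. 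Third, I would bound that count: since the crystal sits inside $B(s\omega_r)$, and the relevant entries all lie in a region of width at most $s$ (the number of columns of the rectangle $\Lambda(s\omega_r)$), the maximum number of applications of $e_0$ is at most $s$.

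Concretely, for each of the four families I would point to the formula for $\varphi_0$ or $\varepsilon_0$ implicit in the models of~\cite{FOS:2009} and check that it is expressed as a sum or count ranging over at most $s$ objects. The uniform conceptual reason is that $0$-arrows in these classically irreducible cases behave like the $e_i$ for the adjacent finite node acting on a single row, and such a node acting on a tableau of width $s$ gives $\varepsilon_i \le s$.

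The main obstacle will be that the four families do not admit a single uniform realization, so the argument is genuinely case-by-case and depends on having the explicit $0$-arrow formulas at hand. The hard part is verifying, in the spin cases $D_n^{(1)}$ with $r = n-1, n$ (where $s\omega_r$ is a spin weight and the tableaux involve the modified spin conventions of~\cite{KN:1994}), that $e_0$ still only touches $O(s)$ positions; there the combinatorics of the half-width columns and the relabeling of $n, \overline{n}$ must be tracked carefully. Once the explicit description of $e_0$ is granted in each case, however, the bound $\varepsilon_0(b) \le s$ is an immediate counting statement rather than a deep computation, which is why I expect the proof to be short and to simply assert that it follows from the explicit models.
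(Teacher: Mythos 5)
Your proposal takes essentially the same approach as the paper: the paper offers no detailed argument at all, simply asserting that the bound ``follows immediately from the explicit models'' of the four (IRR) families in \cite{FOS:2009} (Section 4.1 for type $A_n^{(1)}$, Section 6 for the rest), which is exactly the case-by-case inspection of the explicit realizations that you outline. Your elaborations (promotion-based $e_0$ in type $A$, bounding the count of positions $e_0$ can touch by the width $s$ of the rectangle in the remaining cases) are consistent with what those models give, so nothing further needs to be flagged.
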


\subsection{KR crystals constructed via Dynkin automorphisms (case (AUT))}  
\label{caseAUT:section}
 
Let $\g$ be of type $D_n^{(1)}$, $B_n^{(1)}$ or $A_{2n-1}^{(2)}$, with the underlying finite type
Lie algebra of type $X_n=D_n$, $B_n$ or $C_n$, respectively. 
Fix $s>0$ and $r$ so that $B^{r,s}$ is in case (AUT). Consider the classical crystal
\[
C^{r,s}:= \bigoplus B(\lambda),
\]
where the sum is over all $\lambda$ such that $\Lambda(\lambda)$ (see
Definition~\ref{Lg}) can be obtained from $\Lambda(s \omega_r)$ by removing vertical
dominos. As in Section~\ref{pm-diagrams}, the $X_{n-1}$ highest weight elements in $C^{r,s}$ (i.e. the highest weight 
element for the algebra with Dynkin diagram $I \backslash \{ 0,1 \}$)
are indexed by $\pm$ diagrams whose outer shape can be obtained from $\Lambda(s \omega_r)$
by removing vertical dominos.  
 
\begin{definition} \label{varsigma-def}
Define the involution $\varsigma$ on the $X_{n-1}$ highest weight vectors of $C^{r,s}$, as
indexed by $\pm$ diagrams, as follows.
Let $P$ be a $\pm$ diagram with $\os(P)=\Lambda$ and $\is(P)=\lambda$. For each
$1 \leq i \leq r-1$:
\begin{enumerate}
\item If  $i\equiv r-1 \pmod{2}$ then above each column of $\la$ of height $i$, there must
be a $+$ or a $-$. Interchange the number of such $+$ and $-$.
\item If $i\equiv r \pmod{2}$ then above each column of $\la$ of height $i$,
either there are no signs or a $\mp$ pair. Interchange the number of columns of each type.
\end{enumerate}
By Theorem~\ref{branching}, this can be extended in a unique way to an involution
$\varsigma$ on $C^{r,s}$.
\end{definition}
 
\begin{theorem} \label{vertical-realization} \cite{S:2008,FOS:2009}
Fix $\g$ and a KR crystal $B^{r,s}$ in case (AUT).
Define operators $e_0$ and $f_0$ on $C^{r,s}$ by $e_0 := \varsigma \circ e_1 \circ \varsigma$ and
$f_0 := \varsigma \circ f_1 \circ \varsigma$. Then $C^{r,s}$ along with these new operators is
isomorphic to $B^{r,s}.$  \qed
\end{theorem}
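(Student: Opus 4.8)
The plan is to apply the uniqueness result of Lemma~\ref{uniqueness}: it suffices to show that $C^{r,s}$, equipped with $e_1,f_1,\dots,e_n,f_n$ and the new operators $e_0,f_0$, is a regular abstract crystal of type $\g$ (Definition~\ref{abstract-C}) that is isomorphic to $B^{r,s}$ as a $\{1,\dots,n\}$-crystal; the lemma then automatically upgrades this to an isomorphism of affine crystals. The isomorphism of $\{1,\dots,n\}$-crystals requires no work beyond the definition of $C^{r,s}$: in case (AUT) the constant $\diamond$ in the classical decomposition~\eqref{KR-decomp} is a vertical domino, so $B^{r,s}$ is the direct sum of exactly those $B(\lambda)$ whose shape is obtained from $\Lambda(s\omega_r)$ by removing vertical dominoes, which is precisely how $C^{r,s}$ is built. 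Hence the whole content lies in verifying the regular abstract crystal condition, namely that for each pair $i\neq j$ the operators $e_i,f_i,e_j,f_j$ generate a union of integrable highest weight $\g_{i,j}$-crystals.

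The organizing principle is that $\varsigma$ is an involution which, by its construction in Definition~\ref{varsigma-def} via Theorem~\ref{branching}, preserves $\is(P)$ and hence the $\{2,\dots,n\}$-weight; consequently $\varsigma$ commutes with $e_j,f_j$ for all $j\in\{2,\dots,n\}$, while by definition $\varsigma e_0\varsigma=e_1$ and $\varsigma f_0\varsigma=f_1$. Thus $\varsigma$ intertwines the tuple $(e_0,f_0,e_2,f_2,\dots,e_n,f_n)$ with $(e_1,f_1,e_2,f_2,\dots,e_n,f_n)$. Since the latter is the genuine classical $X_n$-structure, and since in each case (AUT) type the sub-diagram on $\{0,2,\dots,n\}$ is again of type $X_n$ with node $0$ occupying the position of node $1$, transporting the classical structure through $\varsigma$ endows $C^{r,s}$ with a genuine $X_n$-crystal structure also for the index set $\{0,2,\dots,n\}$. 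These two classical structures together dispose of every pair $i\neq j$ except $(0,1)$: pairs inside $\{1,\dots,n\}$ use the original structure, and pairs consisting of $0$ together with some $j\geq 2$ use the transported one (note $\g_{0,j}\cong\g_{1,j}$, as $0$ and $1$ are interchanged by the relevant diagram automorphism, and in particular $e_0,f_0$ form $\mathfrak{sl}_2$-strings because node $0$ belongs to a genuine crystal).

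It remains to treat the pair $(0,1)$, where $\g_{0,1}=A_1\times A_1$, and this is where I expect the real obstacle: one must prove that $e_0,f_0$ genuinely commute with $e_1,f_1$, which is not formal since $e_0$ is defined artificially. Because all four operators commute with $e_j,f_j$ for $j\geq 3$, it is enough to check the commutation relations on $X_{n-2}$ highest weight vectors, which by Section~\ref{pm-diagrams} are indexed by nested pairs of $\pm$ diagrams $(P,p)$. On these, Lemma~\ref{S5.1} describes $e_1$ explicitly through box movements between $P$ and $p$, and the explicit rule of Definition~\ref{varsigma-def} describes $\varsigma$; composing yields a closed combinatorial formula for $e_0=\varsigma e_1\varsigma$, after which $e_0e_1=e_1e_0$ and $f_0f_1=f_1f_0$ reduce to a finite case analysis. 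This analysis, with its type-dependent subcases (the extra symbol $0$ in type $B$ and the colorings of height-$(n-1)$ columns in type $D$) and the degenerate small ranks $B_2$, $C_2$, $D_3$ treated separately, is exactly the work carried out in~\cite{S:2008,FOS:2009}. Once the $(0,1)$ relation is in place, $C^{r,s}$ is a regular abstract crystal, and Lemma~\ref{uniqueness} completes the proof.
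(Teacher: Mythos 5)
Your reduction is sound as far as it goes: the classical isomorphism $C^{r,s}\cong B^{r,s}$ is immediate from \eqref{KR-decomp}; the transport of the classical structure through $\varsigma$ does dispose of every pair $\{i,j\}$ except $\{0,1\}$ (since $\varsigma$ commutes with $e_j,f_j$ for $j\ge 2$ by construction of its extension in Definition~\ref{varsigma-def}); and once regularity is established, Lemma~\ref{uniqueness} finishes the proof. The gap sits exactly where you locate the ``real obstacle,'' and it is not repaired by the citation you lean on. The commutation of $e_0=\varsigma\circ e_1\circ\varsigma$ with $e_1,f_1$ is \emph{not} ``exactly the work carried out in \cite{S:2008,FOS:2009}'': neither paper verifies any crystal axiom for a combinatorially defined $e_0$. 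Their logic runs in the opposite direction. One starts from the genuine KR crystal $B^{r,s}$, which exists as the crystal base of the KR module by \cite{Okado:2007,OS:2008} and is therefore regular; one then shows by a uniqueness argument (of the same kind as Lemmas~\ref{uniqueness} and~\ref{D-extra-auts} here, and recorded as Lemma~\ref{A:KR}(ii)) that $B^{r,s}$ carries an involution intertwining the node-$0$ and node-$1$ operators and commuting with $e_j,f_j$ for $j\ge2$; finally, the content of \cite{S:2008} is that any such involution is \emph{forced}, by weight considerations on the $X_{n-1}$ highest weight vectors, to act on $\pm$ diagrams by the rule of Definition~\ref{varsigma-def}. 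With that identification, $e_0=\varsigma\circ e_1\circ\varsigma$ holds because $e_0$ is the honest affine crystal operator of a $U_q'(\g)$-module crystal; the relation between $e_0$ and $e_1$ needed for regularity at the pair $\{0,1\}$ is then automatic, never something checked from the $\pm$ diagram combinatorics.

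Consequently, a reader following your outline would go to \cite{S:2008,FOS:2009} for the $A_1\times A_1$ verification on nested $\pm$ diagrams and find nothing; that case analysis (which would amount to a direct combinatorial proof that $C^{r,s}$ satisfies the crystal axioms at node $0$) has never been written down, and carrying it out would be substantial new work, not a routine check. To close the gap you should either perform that analysis yourself, or restructure the argument as in the references: take the abstract involution on the genuine $B^{r,s}$ as the starting point, prove (following \cite{S:2008}) that its action on $X_{n-1}$ highest weight vectors must be the combinatorial $\varsigma$, and then transport. Note also that within the present paper you cannot simply quote Lemma~\ref{A:KR}(ii) for this purpose: for case (AUT) its proof cites the very construction of Theorem~\ref{vertical-realization}, so the non-circular source for the existence of the involution is \cite{FOS:2009}.
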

 
\begin{lemma} \label{goode-D}
Fix $\g$ and a KR crystal $B^{r,s}$ as in case (AUT). Consider the realization of $B^{r,s}$ given in
Theorem~\ref{vertical-realization}.
Fix $b \in B^{r,s}$, and assume $b$ lies in the component $B(\gamma)$ and $e_0(b)$
lies in the classical component $B(\gamma')$. Then
\begin{enumerate}
\item $\Lambda(\gamma')$ is either equal to $\Lambda(\gamma)$, or else is obtained from
$\Lambda(\gamma)$ by adding or removing a single vertical domino.
\item
If $\varepsilon_0(b)>s$, then $\Lambda(\gamma')$ is obtained from $\Lambda(\gamma)$ by
removing a vertical domino.
\end{enumerate}
\end{lemma}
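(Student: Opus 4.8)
For a proof I would work entirely inside the realization of Theorem~\ref{vertical-realization}, where $e_0=\varsigma e_1\varsigma$, and I would track only the outer shape, since $\os(P)=\La(\gamma)$ records exactly the classical component $B(\gamma)$ of $b$. The first step is to reduce to $X_{n-2}$-highest weight vectors. Since $\varsigma$ preserves $\is(P)$, hence the $X_{n-1}$-weight, and extends as an $X_{n-1}$-crystal isomorphism, it commutes with $e_j,f_j$ for $j\ge 2$; as $e_1$ commutes with $e_j,f_j$ for $j\ge 3$ (node $1$ is adjacent only to node $2$ in each of $D_n,B_n,C_n$), the operator $e_0$ commutes with $e_j,f_j$ for all $j\ge 3$. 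Thus $\varepsilon_0$ is constant on $X_{n-2}$-components and $e_0$ respects the $X_{n-2}$-structure, so replacing $b$ by an $X_{n-2}$-highest weight vector in its $X_{n-2}$-component changes neither $\varepsilon_0(b)$ nor the outer shapes of the components of $b$ and of $e_0(b)$. Hence I may assume $b$ is $X_{n-2}$-highest weight, given by a nested pair $(P,p)$ with $\os(P)=\La(\gamma)$.

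Next I record two structural facts. Because $\varsigma$ commutes with $f_2$, it acts on nested pairs by $\varsigma(P,p)=(\varsigma(P),p)$, touching only the outer diagram; and because $e_1$ is a classical operator it preserves the $X_n$-component, hence preserves $\os$. Writing $(P'',p'')=e_1(\varsigma(P),p)$ as in Lemma~\ref{S5.1}, we therefore get $e_0(b)=(\varsigma(P''),p'')$ with $\os(P'')=\os(\varsigma P)$. For a $\pm$ diagram $Q$ set $d_i(Q):=$ (number of height-$i$ columns of $\is(Q)$ carrying no sign above them) $-$ (number carrying a $\mp$ pair), for each $i\equiv r\pmod 2$ with $1\le i\le r-1$. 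Inspecting Definition~\ref{varsigma-def} shows the case (1) interchanges leave $\os$ unchanged, while the case (2) interchanges give $\os(\varsigma Q)-\os Q=\sum_i d_i(Q)\,v_i$, where $v_i$ is a vertical domino at heights $i+1,i+2$; moreover $d_i(\varsigma Q)=-d_i(Q)$. Combining these with $\os(P'')=\os(\varsigma P)$ telescopes to
\[
\os(\varsigma P'')-\os(P)=\sum_i\bigl(d_i(P'')-d_i(\varsigma P)\bigr)\,v_i ,
\]
so the change in outer shape under $e_0$ equals precisely the change induced by $e_1$ in the domino-valued quantity $\sum_i d_i\,v_i$.

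The heart of the argument, and the main obstacle, is then a bounded-change statement about $e_1$ alone: applying Lemma~\ref{S5.1} to $(\varsigma P,p)$ alters $\sum_i d_i\,v_i$ by at most one domino, which gives part (i). I would verify this by running through the four cases of Lemma~\ref{S5.1}. In each case $e_1$ modifies $\is=\ms$ by one box and the sign content of the outer diagram at one or two of the adjacent heights $k-1,k,k+1$; since $d_i$ depends only on $\is$ and the signs directly above its height-$i$ columns, only the $d_i$ with $i\in\{k-1,k,k+1\}\cap(r+2\Z)$ can move, and a bookkeeping of how a single column migrates among the ``no sign'', ``$\mp$ pair'', and single-sign buckets shows each case shifts $\sum_i d_i\,v_i$ by $0$ or one domino. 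I expect this bookkeeping to be the delicate point: one must simultaneously track the box that $e_1$ adds to or removes from $\is$ and the sign it creates or destroys, and check that their joint contribution to $d_i$ never accumulates to two dominoes.

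For part (ii) I would use $\varepsilon_0(b)=\varepsilon_1(\varsigma b)=\varepsilon_1(\varsigma P,p)$, valid since $\varsigma$ is an involution and so $e_0^m=\varsigma e_1^m\varsigma$. The outer shape $\os(\varsigma P)$ is obtained from the rectangle $\La(s\omega_r)$ by removing vertical dominoes, so it has at most $s$ columns, which bounds the number of unpaired $-$ the outer diagram can supply. The hypothesis $\varepsilon_1(\varsigma P,p)>s$ then forces the first application of $e_1$ into the branch of Lemma~\ref{S5.1} governed by an unpaired $-$ in the outer diagram, at a height where the only consistent migration decreases $\sum_i d_i\,v_i$, i.e.\ removes a domino; feeding this into the telescoping identity above shows $\La(\gamma')$ is $\La(\gamma)$ with a vertical domino removed. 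Pinning down exactly how the threshold $s$ separates this domino-removing branch from the others is the remaining delicate point, and is again settled by the same case analysis of Lemma~\ref{S5.1}.
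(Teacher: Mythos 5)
Your reduction to $X_{n-2}$-highest weight vectors, the observation that $\varsigma$ acts on nested pairs by $(P,p)\mapsto(\varsigma(P),p)$, and the telescoping identity expressing $\La(\gamma')-\La(\gamma)$ as the change of $\sum_i d_i v_i$ under a single application of $e_1$ to $(\varsigma(P),p)$ are all correct, and this is essentially the paper's own argument (the paper runs the same bookkeeping in the form ``$e_1$ adds or removes one symbol, then apply the rules of $\varsigma$''). Your part (i) is therefore sound modulo the deferred case check: in each branch of Lemma~\ref{S5.1} the quantity $\sum_i d_i v_i$ moves by exactly one domino when the affected column has height $<r$, and is unchanged when the move occurs at height $r$ (full columns are invisible to $\varsigma$), which is precisely why the shape can also stay constant. (One caveat for making this airtight when $r$ is even: you must also include $i=0$, i.e.\ columns of the outer shape of height $0$ versus height $2$, in the $d_i$ bookkeeping, padding diagrams to width $s$; your range $1\le i\le r-1$ misses this boundary case.)

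The genuine gap is in part (ii), where you assign the two branches of Lemma~\ref{S5.1} exactly the opposite roles from the ones they play. First, the forcing goes the wrong way. Along the $1$-string through $\varsigma(b)$, the moves governed by an unpaired $-$ in the outer diagram each consume the unique $-$ of some column and, once there is no unpaired $+$ in $p$, there never is one again further up the string; since $\varsigma(P)$ has at most $s$ columns, a string whose \emph{first} move is a $-$-move has length at most $s$. Hence $\varepsilon_0(b)=\varepsilon_1(\varsigma(P),p)>s$ forces the existence of an unpaired $+$ in $p$, and since Lemma~\ref{S5.1} gives priority to that branch, the first move is a $+$-move (a $+$ migrates from $p$ into the outer diagram) --- not, as you assert, a $-$-move. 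Indeed your own sentence is internally inconsistent: if the number of unpaired $-$ is bounded by $s$, then $\varepsilon_1>s$ pushes you \emph{out of}, not into, that branch. Second, and independently, the $-$-branch has the wrong effect on the shape: absorbing an unpaired $-$ of the outer diagram into the intermediate shape turns a single-$-$ column into a no-sign column (or a $\mp$ column into a single-$+$ column), which \emph{increases} $\sum_i d_i v_i$; after conjugating by $\varsigma$ this \emph{adds} a vertical domino to the outer shape, or leaves it unchanged if the move is at height $r$, but it never removes one. The domino removal needed in part (ii) comes precisely from the $+$-branch at height strictly less than $r$ (a $+$ added in an empty column, or below a $-$), which is the case analysis the paper actually performs. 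Finally, even after correcting the branch, your argument must still rule out that this first $+$-move occurs at height $r$, where the shape is unchanged; the paper deduces this, too, from the width bound, and your write-up is silent on it.
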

 
\begin{proof}
Recall that for $b\in B^{r,s}$ by definition $e_0(b) = \varsigma \circ e_1\circ \varsigma (b)$.
Since $e_0$ and $e_1$ commute with $e_i$ for $i=3,4,\ldots,n$, they are defined
on $X_{n-2}$ components, which are described by pairs of $\pm$ diagrams $(P,p)$.
Consider the $1$-string $\{b_0,b_1,\ldots,b_k\}$ where $b_i=e_1^i\circ \varsigma(b)$
with corresponding $\pm$ diagrams $(P_i,p_i)$.
By Lemma~\ref{S5.1} there exists a $0\le j\le k$ such that:
\begin{enumerate}
\item[(a)] For $0\le i<j$, $P_{i+1}$ is obtained from $P_i$ by the addition of one box containing $+$;
the $+$ are added from right to left with increasing $i$.
\item[(b)] For $j\le i<k$, $P_{i+1}$ is obtained from $P_i$ by the removal of one box containing $-$;
the $-$ are removed from left to right with increasing $i$.
\end{enumerate}
Recall that $\varsigma$ interchanges $+$ and $-$ in columns of height congruent to $1\pmod{r}$
and $\mp$-pairs and empty columns of height congruent to $0\pmod{r}$. Empty columns of
height $r$ are left unchanged.
Since the width of the diagrams is at most $s$, we can only have
$\varepsilon_0(b)>s$ if we are in case (a) above and the $+$ is added at height strictly smaller
than $r$. Then there are two cases:
\begin{itemize}
\item The $+$ is added below a $-$ in $P_0$.
\item The $+$ is added in an empty column in $P_0$.
\end{itemize}
In both cases it is easy to check from the rules of $\varsigma$ that a vertical
domino is removed from the outer shape.
 
In all other cases, applying $e_1$ to $\varsigma(b)$ only adds or removes a single symbol to/from the corresponding 
$\pm$ diagram $P$, so from the definition of $\varsigma$ and the fact that $e_0= \varsigma \circ e_1 \circ \varsigma$, 
it is clear that $\Lambda(\gamma')$ differs from $\Lambda(\gamma)$ by at most one vertical domino, and the lemma 
holds.
\end{proof}
 
\subsection{$B^{r,s}$ of type $C_n^{(1)}$ for $r<n$ (in case (VIR))}
\label{vir1}
 
The KR crystals of type $C_n^{(1)}$ are constructed inside an ambient crystal of type $A_{2n+1}^{(2)}$.
 
\begin{theorem} \cite[Section 4.3, Theorem 5.7]{FOS:2009} \label{virtual-KR-C}
Fix $n \geq 2$ and $r<n$. Let $\hat{B}^{r,s}$ be the KR crystal corresponding to type
$A_{2n+1}^{(2)}$, with crystal operators $\hat e_0, \ldots, \hat e_{n+1}$.
Let $V$ be the subset of all $b\in \hat{B}^{r,s}$ which are invariant under
the involution $\varsigma$ from Definition~\ref{varsigma-def}.
Define the virtual crystal operators
$e_i:= \hat e_{i+1}$ for $1 \leq i \leq n$ and $e_0:= \hat e_0 \hat e_1$.
Then $V$ along with the operators $e_0, \ldots, e_{n}$ is isomorphic to $B^{r,s}$ for type
$C_n^{(1)}$.
 
Here the operators $f_i$ and $\hat f_i$ are defined by the condition $f_i(b) = b'$ if and only
if $e_i(b')=b$. \qed
\end{theorem}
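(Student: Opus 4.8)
The plan is to verify the hypotheses of the uniqueness Lemma~\ref{uniqueness} for type $C_n^{(1)}$: I will show that $V$, equipped with $e_0,\ldots,e_n$ and the corresponding $f_i$, is a regular abstract crystal of type $C_n^{(1)}$ which is isomorphic to $B^{r,s}$ both as a $\{1,\ldots,n\}$-crystal and as a $\{0,1,\ldots,n-1\}$-crystal. The lemma then immediately upgrades this to an isomorphism of affine crystals, which is the assertion.

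First I would record the basic structural facts. The involution $\varsigma$ is the crystal-level realization of the order-two automorphism of the $A_{2n+1}^{(2)}$ Dynkin diagram that swaps the two fork nodes $0$ and $1$ and fixes $2,\ldots,n+1$; by Theorem~\ref{vertical-realization} it satisfies $\varsigma\hat e_1\varsigma=\hat e_0$ and commutes with $\hat e_j,\hat f_j$ for $j\ge 2$. Since nodes $0$ and $1$ are non-adjacent (both attach only to node $2$), $\hat e_0$ and $\hat e_1$ commute, so $e_0=\hat e_0\hat e_1$ is unambiguous; and $\varsigma\,\hat e_0\hat e_1\,\varsigma=\hat e_1\hat e_0=\hat e_0\hat e_1$ shows that $V$ is preserved by $e_0$. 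The operators $e_i=\hat e_{i+1}$ for $i\ge 1$ preserve $V$ because they commute with $\varsigma$, so all virtual operators are well defined on $V$.

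Next I would match the two branchings. As a $\{1,\ldots,n\}$-crystal, $V$ is the $\varsigma$-fixed subset of the $\{2,\ldots,n+1\}$-parabolic (classical type $C_n$) of $\hat B^{r,s}$. Using the classical decomposition~\eqref{KR-decomp} of $\hat B^{r,s}$ (of type $A_{2n+1}^{(2)}$, so the removed piece $\diamond$ is a vertical domino), the branching rule of Theorem~\ref{branching}, and the description of $\varsigma$ as interchanging signs in the associated $\pm$ diagrams, I would compute the $C_n$-character of $V$ and check that it equals the decomposition~\eqref{KR-decomp} of $B^{r,s}$ for type $C_n^{(1)}$, where now $\diamond$ is a horizontal domino; the combinatorial content is precisely that the $\varsigma$-invariant, vertical-domino-removal data branches to horizontal-domino-removal data. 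The $\{0,1,\ldots,n-1\}$-branching is handled by the same method after conjugating by $\varsigma$, which interchanges the node-$0$ and node-$1$ data and reduces that computation to the one just performed.

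The main obstacle is regularity, and within it the rank-two behavior of the pair $(0,1)$. For pairs $(0,i)$ with $i\ge 2$ the operators $\hat e_0,\hat e_1$ commute with $\hat e_{i+1}$, so $e_0$ and $e_i$ commute and give the required $A_1\times A_1$ structure, while regularity for pairs not involving $0$ is inherited directly from $\hat B^{r,s}$. The delicate case is the double bond between the new nodes $0$ and $1$, where $e_0=\hat e_0\hat e_1$ and $e_1=\hat e_2$ must be shown to generate an integrable rank-two crystal of finite type $C_2$ on $V$. This is exactly the alignment condition of the virtual-crystal method: on $V$ one has $\hat\varepsilon_0(b)=\hat\varepsilon_1(b)$ and $\hat\varphi_0(b)=\hat\varphi_1(b)$, and I would verify by analyzing the $\{0,1,2\}$-strings of $\hat B^{r,s}$ that, on these $\varsigma$-symmetric strings, the composite $\hat e_0\hat e_1$ together with $\hat e_2$ acts as the Chevalley generators of a $C_2$-crystal. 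Once this single rank-two verification and the two character matchings are in place, $V$ is a regular abstract crystal with the two required branchings, and Lemma~\ref{uniqueness} closes the argument.
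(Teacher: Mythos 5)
The paper itself does not prove Theorem~\ref{virtual-KR-C}; it is quoted from \cite{FOS:2009} with an immediate \textit{qed}, and the proof there follows essentially the architecture you propose (stability of $V$ under the virtual operators, matching of parabolic decompositions, regularity, then a uniqueness statement). Your first paragraph is correct as far as it goes. But two of your steps have genuine problems. The claim that the $\{0,1,\ldots,n-1\}$-branching is ``handled by the same method after conjugating by $\varsigma$'' fails: on $V$ the involution $\varsigma$ is the identity, and it commutes with every virtual operator, since $\varsigma e_0 \varsigma = \hat e_1\hat e_0 = e_0$ and $\varsigma e_i \varsigma = e_i$ for $i\geq 1$; conjugation by $\varsigma$ therefore changes nothing. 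The ambient nodes $0$ and $1$ are both absorbed into the single virtual node $0$, so interchanging them has no bearing on interchanging the virtual parabolic subsets $\{1,\ldots,n\}$ and $\{0,\ldots,n-1\}$, and the $A_{2n+1}^{(2)}$ diagram has no automorphism exchanging its fork end with its double-bond end that could effect such a reduction. The virtual $\{0,\ldots,n-1\}$-structure lives inside the ambient $D_{n+1}$-parabolic on nodes $\{0,1,\ldots,n\}$ with the folded operator $e_0=\hat e_0\hat e_1$, and requires its own computation. (If you invoke Lemma~\ref{uniqueness} exactly as stated, this verification is not needed at all, since that lemma asks only for regularity plus the $\{1,\ldots,n\}$-isomorphism; so either delete this step or do it honestly. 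You should also flag that the proof of Lemma~\ref{uniqueness} in type $C_n^{(1)}$ cites Sections 5.2 and 6.1 of \cite{FOS:2009}, the same paper containing the theorem you are proving; this is not circular, because those uniqueness results are established there independently of the realization, but it deserves an explicit remark.)

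The deeper gap is the rank-two regularity for the virtual pair $(0,1)$, which you rightly call the delicate case but then defer to ``analyzing the $\{0,1,2\}$-strings.'' This cannot be settled by string analysis. What must be proven is that on each $\varsigma$-self-paired component of the ambient $\{0,1,2\}$-crystal --- a finite type $A_3$ highest weight crystal $B(\mu)$ with $\mu$ symmetric, on which $\varsigma$ restricts to a twisted (not crystal-commuting) automorphism --- the fixed points equipped with $(\hat e_0\hat e_1,\hat e_2)$ form a disjoint union of integrable highest weight $C_2$-crystals, as Definition~\ref{abstract-C} demands. Knowing that $e_0$ and $e_1$ separately act by $\fsl_2$-strings does not verify a doubly-laced rank-two crystal structure. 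The required input is the similarity/virtualization theorem of Kashiwara \cite{Kashiwara:1996} for the folding of $A_3$ onto $C_2$ (with no squaring of operators, the scaling appropriate to a twisted ambient type), which is precisely the ingredient Section~\ref{section:realization} of the paper says underlies these constructions. Moreover, similarity only identifies the subset of $V$ generated from aligned highest weight elements by the virtual operators with a union of $C_2$-crystals; since the theorem defines $V$ as the full fixed-point set, you additionally owe the equality of that generated subset with the fixed-point set (for instance by comparing cardinalities through your $\pm$-diagram count). Without these two inputs the proposal is incomplete.
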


Recall that as a classical
crystal, $B^{r,s}$ of type $C_n^{(1)}$ decomposes as in~\eqref{KR-decomp}.
 
\begin{lemma}
\label{goode-C}
Let $B^{r,s}$ with $r<n$ be the KR crystal of type $C_n^{(1)}$.
Fix $b \in B^{r,s}$, and assume $b$ lies in the component $B(\gamma)$ and $e_0(b)$
lies in the classical component $B(\gamma')$. Then
\begin{enumerate}
\item \label{only-dominoes-gC} $\Lambda(\gamma')$ is either equal to $\Lambda(\gamma)$, or else is obtained from
$\Lambda(\gamma)$ by adding or removing a single horizontal domino.
\item 
If $\varepsilon_0(b)>\lceil s/2 \rceil$,
then $\Lambda(\gamma')$ is obtained from $\Lambda(\gamma)$ by
removing a horizontal domino.
\end{enumerate}
\end{lemma}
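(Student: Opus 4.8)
The statement for $C_n^{(1)}$ in case (VIR) is the exact analogue of Lemma~\ref{goode-D} for case (AUT), and the strategy is to transport that lemma through the virtual crystal construction of Theorem~\ref{virtual-KR-C}. The plan is to express $e_0 = \hat e_0 \hat e_1$ acting on $V \subset \hat B^{r,s}$ in terms of the ambient $A_{2n+1}^{(2)}$ KR crystal, and then to track how each application of $\hat e_0$ and $\hat e_1$ changes the classical shape. First I would record the classical decomposition of $\hat B^{r,s}$ for $A_{2n+1}^{(2)}$, where the removable object is a vertical domino, and fix the correspondence (under the doubling/similarity map) between the outer shapes $\Lambda(\gamma)$ of classical components of $B^{r,s}$ and the outer shapes of the ambient $A_{2n+1}^{(2)}$ components. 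The key observation will be that a single vertical domino in the ambient type corresponds, after applying the virtual scaling, to a single horizontal domino in the $C_n^{(1)}$ shape; this is what converts the ``vertical domino'' conclusion of the ambient case into the ``horizontal domino'' conclusion claimed here.

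\medskip

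\noindent The core of the argument mirrors the proof of Lemma~\ref{goode-D}. Since $e_0$ and each $e_i$ for $i \geq 1$ (hence $\hat e_1, \hat e_2, \ldots$) can be analyzed on the relevant small-rank components described by pairs of nested $\pm$ diagrams, I would invoke Lemma~\ref{S5.1} twice: once to describe the action of $\hat e_1$ and once for $\hat e_0$ (which in the ambient type plays the role that $e_0 = \varsigma \circ e_1 \circ \varsigma$ plays in case (AUT)). Each of $\hat e_0$ and $\hat e_1$ individually either preserves the ambient shape or changes it by a single box (adding a $+$ or removing a $-$ at some height), so the composite $\hat e_0 \hat e_1$ changes the ambient shape by at most two boxes. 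Restricting to the $\varsigma$-invariant subset $V$ forces these two box-moves to be compatible, and a direct check of the cases in Lemma~\ref{S5.1} shows the net effect on $\Lambda(\gamma)$ is the addition or removal of a single horizontal domino (or no change), giving part~\eqref{only-dominoes-gC}.

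\medskip

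\noindent For the second part, I would translate the hypothesis $\varepsilon_0(b) > \lceil s/2 \rceil$ into a statement about the ambient crystal. Because $e_0 = \hat e_0 \hat e_1$, a long $0$-string of length exceeding $\lceil s/2 \rceil$ forces the corresponding ambient string (under $\hat e_0, \hat e_1$) to be long enough that, by the width bound $\leq s$ on the ambient diagrams together with the analogue of the case-(a) analysis in Lemma~\ref{goode-D}, a domino must be \emph{removed} rather than added --- one cannot keep adding $+$ signs past the available width. The scaling relationship between the ambient width and the $C_n^{(1)}$ width is exactly what produces the ceiling $\lceil s/2 \rceil$ in place of the bound $s$ appearing in Lemma~\ref{goode-D}. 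I expect the main obstacle to be precisely this bookkeeping: verifying that the doubling map sends the ambient width threshold to $\lceil s/2 \rceil$ and that the $\varsigma$-invariance of elements of $V$ is preserved under the relevant applications of $\hat e_0$ and $\hat e_1$, so that one may legitimately apply the $\pm$-diagram description throughout the string. Once the width/parity arithmetic is pinned down, the conclusion that the removed object is a horizontal domino follows from the shape correspondence established in the first step.
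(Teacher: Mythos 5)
Your overall plan---realize $B^{r,s}$ inside the ambient crystal of Theorem~\ref{virtual-KR-C} and analyze $e_0=\hat e_0\hat e_1$ via Lemma~\ref{S5.1} and $\varsigma$-invariance---is indeed the paper's plan, but the mechanism you build it on does not exist, and this is a genuine gap. Theorem~\ref{virtual-KR-C} involves no ``doubling/similarity map'' and no ``virtual scaling'': the ambient crystal is $\hat B^{r,s}$ of type $A_{2n+1}^{(2)}$ with the \emph{same} $r$ and $s$, the subset $V$ consists of the $\varsigma$-invariant elements, and the operators are $e_i=\hat e_{i+1}$ (an index shift) and $e_0=\hat e_0\hat e_1$; no operator is squared and no shape is doubled (the squaring/doubling constructions are Theorems~\ref{virtual-KR-th-B} and~\ref{virtual-KR-AD}, used for other types). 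Consequently your ``key observation''---that an ambient vertical domino corresponds to a $C_n^{(1)}$ horizontal domino after scaling---is not a fact one can establish, and the shape correspondence you propose (ambient outer shapes matched to $C_n^{(1)}$ outer shapes) is not the relevant one. What is true, and what the paper uses, is this: writing $(P,p)$ for the nested $\pm$ diagrams of the $\{3,\dots,n+1\}$-highest weight corresponding to $\hat b=S(b)$, the $C_n^{(1)}$ classical shape is the \emph{inner} shape, $\Lambda(\gamma)=\is(P)$, because the classical subalgebra $\{1,\dots,n\}$ of $C_n^{(1)}$ sits inside the ambient diagram as $\{2,\dots,n+1\}$. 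On $\varsigma$-invariant elements $e_0=\hat e_1\circ\varsigma\circ\hat e_1$; each $\hat e_1$ changes $\is(P)$ by exactly one box (Lemma~\ref{S5.1}), $\varsigma$ preserves $\is(P)$, and $\varsigma$-invariance of source and target forces the two box changes into the same row. That is where the horizontal domino comes from; it is not the image of an ambient vertical domino under any map. (Note also that $\hat e_1$ is a classical operator for the ambient crystal, so it never changes the ambient classical shape $\os(P)$ at all; your bookkeeping of ``the ambient shape'' conflates $\os(P)$ with $\is(P)$.)

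Part (ii) exhibits the same problem in sharper form: the threshold $\lceil s/2\rceil$ cannot come from ``the scaling relationship between the ambient width and the $C_n^{(1)}$ width,'' because both diagrams have width bounded by the same $s$. Its actual source is that $\varsigma$-invariance of $P$ caps the number of $-$ symbols in $P$ at $\lfloor s/2\rfloor$; since $\varepsilon_0(b)=\hat\varepsilon_1(\hat b)>\lceil s/2\rceil$, Lemma~\ref{S5.1} then forces the first $\hat e_1$ to move a $+$ from $p$ into $P$ rather than a $-$ out of $P$. One must then treat three cases for where this $+$ lands (below a $-$ in $P$; in an empty column of height $<r$; in an empty column of height $r$), and the last case requires a separate count---an empty column improves the bound to $\lfloor(s-1)/2\rfloor$ symbols $-$, so there are at least two uncanceled $+$ in $p$ and the second $\hat e_1$ also performs a $+$ move---before one can conclude that a horizontal domino is removed from $\is(P)$. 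None of this case analysis is recoverable from the width/parity arithmetic you propose, so as written the argument would not go through.
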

 
\begin{proof}
Realize the crystal $B^{r,s}$ of type
$C_n^{(1)}$ inside the ambient crystal $\hat{B}^{r,s}$ of type $A_{2n+1}^{(2)}$ using Theorem~\ref{virtual-KR-C}.
Denote the embedding by $S: B^{r,s} \hookrightarrow \hat{B}^{r,s}$, and set
$\hat{b}=S(b)$. Let $(P,p)$ be the pair of $\pm$ diagrams associated to
the $\{3,4,\ldots,n+1\}$ highest weight corresponding to $\hat{b}$, and let $\gamma$ be the highest weight of
$b$ as a $C_n \subset C_n^{(1)}$ crystal.
Since $e_0=\hat{e}_1 \hat{e}_0$ and $e_i=\hat{e}_{i+1}$, $\Lambda(\gamma)=\is(P)$.
 
Recall from~\cite[Section 4.3]{FOS:2009} that $S(b)$ is invariant under
$\varsigma$, hence in particular $P$ is invariant under $\varsigma$. Also, as operators on
$\hat{B}^{r,s}$,
\begin{equation} \label{Se0}
S(e_0)= \hat{e}_1\hat{e}_0 = \hat e_1 \circ \varsigma \circ \hat e_1 \circ \varsigma
= \hat e_1 \circ \varsigma \circ \hat e_1.
\end{equation}
By~Lemma~\ref{S5.1}, $\hat{e}_1$ either moves a $+$ from $p$ to $P$, or moves a $-$ from $P$
to $p$. That is, $\os(P)$ is unchanged, $\is(P)$ only changes by a single box, and, except for the new $+$ or 
missing $-$, the number of $+$ and $-$ on each row of $P$ is unchanged.
Also, in all situations $\varsigma$ preserves $\is(P)$. Thus~\eqref{Se0} implies that $\is(P)$ can change by at 
most 2 boxes. This along with the description of the classical decomposition of $B^{r,s}$ implies 
part~\eqref{only-dominoes-gC}.

Now assume that $\varepsilon_0(b)>\lceil s/2 \rceil$.
Since $P$ is $\varsigma$-invariant, it is clear that there can be at most $\lfloor s/2 \rfloor$
symbols $-$ in $P$. Since $\varepsilon_0(b) = \hat \varepsilon_1(\hat b) > s/2$, by the description of
$\hat e_1$ from Lemma~\ref{S5.1} we see that $\hat e_1$ must move a $+$ from $p$ to $P$.
There are three cases:
\begin{enumerate}
\item[(a)] The $+$ is added below a $-$ in $P$.
\item[(b)] The $+$ is added in an empty column in $P$ at height less than $r$.
\item[(c)] The $+$ is added in an empty column in $P$ at height $r$.
\end{enumerate}
 
It is not hard to check that in case (a), in $\varsigma\circ \hat{e}_1(P)$, there is one less $+$
in row $k+1$ and one more empty column of height $k-1$ compared to $P$, and everything else is unchanged.
Since both $P$ and  $\hat e_1 \circ \varsigma\circ \hat{e}_1(P)$ are invariant under
$\varsigma$, the only possibility is that the final $\hat e_1$ must add another $+$ to row $k$.
Thus two symbols have been added to $P$ in row $k$, which has the effect of removing
a horizontal domino from $\is(P)$.
 
In case (b), in $\varsigma\circ \hat{e}_1(P)$, there is one less $\mp$-pair
in row $k+1$ and one more $-$ in row $k$ and everything else is unchanged.
Again since  $\hat{e}_1 \circ \varsigma \circ  \hat e_1(b)$ has to be invariant under $\varsigma$, the
only possibility is that $\hat{e}_1$ adds another $+$ to row $k$ of
$\varsigma\circ \hat{e}_1(P)$. Thus $S(e_0)$ removes a horizontal domino from $\is(P)$.

In case (c), $P$ has at least one column with no symbols, and so $P$ contains at most $\lfloor (s-1)/2 \rfloor$
symbols $-$. Since $\hat \varepsilon_1(\hat b)= \varepsilon_0(b) > \lceil s/2 \rceil$, there are at least 2 
uncanceled $+$ in $p$. Consider the 
double $\pm$ diagram $(P',p')$ for $\varsigma \circ \hat e_1(\hat b)$. This differs from $(P,p)$ by: $P'$ has an
extra $-$ at height $r$, and $p'$ has one less $+$. Looking at the cancelation rules before
Lemma~\ref{S5.1}, there is still an uncanceled $+$ in $p'$ (corresponding to the second
uncanceled $+$ in $p$). Thus the second $\hat e_1$ moves another $+$ into $P'$. So in
total $S(e_0) = \hat e_1 \circ \varsigma \circ \hat e_1$ decreases the inner shape of $P$ by 2
boxes, which must remove a horizontal domino.
\end{proof}
 
\subsection{$B^{n,s}$ in type $B_n^{(1)}$ (in case (VIR))} \label{vir2}
 
\begin{theorem}~(see \cite[Lemma 4.2]{FOS:2009}) \label{virtual-KR-th-B}
Let $\hat B^{n,s}$ be the KR crystal of type $A_{2n-1}^{(2)}$ with crystal operators
$\hat e_0, \ldots, \hat e_{n}$.  Let $V$ be the subset of $\hat B^{n,s}$ which
can be reached from the classical highest weight element in $\hat B^{n,s}$ of weight
$s \omega_n$ using the virtual crystal operators $e_i = \hat e_i^2$ for $0 \leq i \leq n-1$,
$e_n = \hat e_n$.
Then $V$ along with the operators $e_0, \ldots, e_n$ is isomorphic to $B^{n,s}$ for
$B_n^{(1)}$.
 
Furthermore, the type $B_n$ highest weight vectors in $V$ are exactly the type $C_n$ highest
weight vectors in $\hat B^{n,s}$ that can be obtained from $\Lambda(s \omega_r)$ by removing
$2 \times 2$ blocks.
\end{theorem}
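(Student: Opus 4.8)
\emph{Plan.} The plan is to realize $B^{n,s}$ as a virtual crystal inside $\hat B^{n,s}$ using Kashiwara's similarity method~\cite{Kashiwara:1996} together with the virtual-crystal formalism, and then to pin down the result by its classical decomposition. The two underlying finite-type algebras are Langlands dual: deleting node $0$ leaves $B_n$ on the virtual side and $C_n$ on the ambient side, and the scaling factors $\gamma_i=2$ for $0\le i\le n-1$, $\gamma_n=1$ are exactly those attached to this folding, so the prescriptions $e_i=\hat e_i^2$ ($i<n$) and $e_n=\hat e_n$ are forced. First I would check that $V$ is closed under the virtual operators and that these operators are $\gamma_i$-aligned: for every $b\in V$ the value $\hat\varepsilon_i(b)$ (and likewise $\hat\varphi_i(b)$) is a multiple of $\gamma_i$, so that $\hat e_i^{\gamma_i}$ and $\hat f_i^{\gamma_i}$ act as honest crystal raising and lowering operators on $V$. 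Granting this, the virtual-crystal machinery of~\cite{OSS:2003,FOS:2009} gives that $V$ with the operators $e_0,\dots,e_n$ is a genuine $U_q'(B_n^{(1)})$ crystal, hence a regular abstract crystal of type $B_n^{(1)}$.

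Next I would identify the classical ($\{1,\dots,n\}$) structure. Restricting to the nodes $1,\dots,n$, the virtual operators are precisely Kashiwara's similarity embedding of $B_n$ crystals into $C_n$ crystals, squaring on the nodes $1,\dots,n-1$ and leaving node $n$ unchanged. On weights this similarity sends $\gamma=\sum_{i<n}m_i\omega_i+m_n\omega_n$ to the $C_n$ weight with $\langle\hat\alpha_i^\vee,\hat\gamma\rangle=2m_i$ for $i<n$ and $m_n$ for $i=n$, so that it doubles the number of columns of each height; in particular $\Lambda(\gamma)$ (a $B_n$ shape obtained from $\Lambda(s\omega_n)$ by removing vertical dominoes) is carried to its width-doubled $C_n$ shape. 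Comparing with the classical decomposition~\eqref{KR-decomp} of $\hat B^{n,s}$ (removal of vertical dominoes from the $n\times s$ rectangle) then shows that $V$ carries exactly the classical decomposition of $B^{n,s}$ for $B_n^{(1)}$.

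The ``Furthermore'' statement follows from this doubling. A $B_n$ highest weight vector $b\in V$ satisfies $\hat e_i^2(b)=0$ for $i<n$ and $\hat e_n(b)=0$; since alignment forces $\hat\varepsilon_i(b)$ to be even, $\hat e_i^2(b)=0$ already gives $\hat e_i(b)=0$, so $b$ is simultaneously a $C_n$ highest weight vector of $\hat B^{n,s}$. Under width-doubling a removed vertical domino (one column, two rows) becomes a removed $2\times 2$ block (two columns, two rows), so the $C_n$ shapes occurring in $V$ are exactly those obtained from $\Lambda(s\omega_n)$ by removing $2\times 2$ blocks. Finally, with the correct classical decomposition established, I would invoke the uniqueness result Lemma~\ref{uniqueness} to upgrade the $\{1,\dots,n\}$-isomorphism to an isomorphism of affine $B_n^{(1)}$ crystals (alternatively, one checks directly from the model that $e_0=\hat e_0^2$ reproduces the $0$-arrows of $B^{n,s}$).

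\emph{Main obstacle.} The crux is the alignment step: one must show that on $V$ the ambient $\hat e_i$-strings have lengths divisible by $\gamma_i$, so that the powers $\hat e_i^{\gamma_i}$ are well defined and genuinely crystal-theoretic. This is where the explicit $A_{2n-1}^{(2)}$ model together with its $\pm$-diagram description and the $\varsigma$-symmetry (Lemma~\ref{S5.1} and Definition~\ref{varsigma-def}) enter, since the required divisibility reflects the $\varsigma$-invariance built into the ambient KR crystal; everything afterward is a comparison of shapes.
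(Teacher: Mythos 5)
Your plan reproduces the architecture of the proof of \cite[Lemma 4.2]{FOS:2009} (similarity embedding, alignment, then uniqueness), whereas the paper does not reprove that lemma at all: it cites it for the entire first statement, and its only new content is the ``furthermore'' clause, which it reads off from the \emph{known} classical decomposition of $B^{n,s}$ of type $B_n^{(1)}$ combined with the weight-doubling of the similarity map. This difference matters, because the two steps you leave unfinished are exactly the ones carrying the mathematical weight. The alignment property you flag as the ``main obstacle'' is not a routine preliminary check; it is essentially the content of \cite[Lemma 4.2]{FOS:2009}, so as written your argument defers, rather than supplies, the key input.

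The second, unacknowledged, gap is the sentence ``Comparing with the classical decomposition~\eqref{KR-decomp} of $\hat B^{n,s}$ \ldots{} shows that $V$ carries exactly the classical decomposition of $B^{n,s}$.'' What your reasoning actually yields is only an upper bound: a $B_n$ highest weight vector of $V$ is a $C_n$ highest weight vector of $\hat B^{n,s}$ whose weight is doubled at the nodes $i<n$. It does not show that every expected component actually occurs in $V$ (the lower bound), and without that you can neither apply Lemma~\ref{uniqueness} (which requires an isomorphism of $\{1,\dots,n\}$-crystals, not merely an embedding into the right thing) nor obtain the word ``exactly'' in the furthermore statement. Moreover, the upper bound itself is weaker than you claim: since $e_n=\hat e_n$ is unscaled, alignment imposes nothing at node $n$, and evenness of $\hat\varphi_i(b)$ for $i<n$ alone does not force a $2\times 2$-removal shape. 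Concretely, for $n=2$, $s=2$, the ambient crystal $\hat B^{2,2}$ of type $A_3^{(2)}$ has a $C_2$ component $B(\omega_2)$ (shape a single column of height $2$, obtained from the $2\times 2$ rectangle by removing one vertical domino); its highest weight vector has $\hat\varphi_1=0$, which is even, yet a single column is not obtained by removing $2\times 2$ blocks. Ruling such components out requires the parity constraint coming from the fact that all weights of $V$ lie in the coset $s\omega_n+\sum_{i<n}2\Z\alpha_i+\Z\alpha_n$ (equivalently, it comes from the known decomposition of $B^{n,s}$ of type $B_n^{(1)}$, in which the number of height-$n$ columns is congruent to $s$ modulo $2$), and your argument never invokes this. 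The paper avoids all of these issues by citing \cite[Lemma 4.2]{FOS:2009} for the isomorphism $V\cong B^{n,s}$ and only then translating the known $B_n$ decomposition into the $2\times 2$-block description.
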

 
\begin{proof}
The virtual crystal realization is proven in~\cite[Lemma 4.2]{FOS:2009}. The classical
decomposition of $B^{n,s}$ as a type $B_n$ crystal is
\begin{equation*}
        \bigoplus_{\bf{k}} B(k_\iota \omega_\iota +  k_{\iota+2} \omega_{\iota+2}
        + \cdots +  k_{n-2} \omega_{n-2} + k_n \omega_n),
\end{equation*}
where $\iota =0,1$ so that $\iota\equiv n \pmod{2}$, $\omega_0=0$, and the sum is over all
nonnegative integer vectors $\bf k$ such that $2k_\iota+\cdots + 2k_{n-2}+k_n < s$.
As discussed in~\cite[Lemma 4.2]{FOS:2009}, the highest weight vectors for each of these components
must be classical type $C_n$ highest weight vector in the ambient crystal $\hat B^{n,s}$ of weight
\begin{equation*}
        2k_\iota \omega_\iota + 2 k_{\iota+2} \omega_{\iota+2}
        + \cdots + 2 k_{n-2} \omega_{n-2} + k_n \omega_n.
\end{equation*}
The weight of these highest weight vectors in the ambient crystal are exactly those $\gamma$
such that $\Lambda(\gamma)$ is obtained from $\Lambda(s \omega_r)$ by removing
$2 \times 2$ blocks. The result follows.
\end{proof}
 
\begin{lemma}
\label{goode-Bn}
Let $B^{n,s}$ be the KR crystal of type $B_n^{(1)}$.
Fix $b \in B^{n,s}$, and assume $b$ lies in the component $B(\gamma)$ and $e_0(b)$
lies in the classical component $B(\gamma')$. Then
\begin{enumerate}
\item \label{only-squares-Bn}
$\Lambda(\gamma')$ is either equal to $\Lambda(\gamma)$, or else is obtained from
$\Lambda(\gamma)$ by adding or removing a single vertical domino.
\item \label{rest-part-Bn}
If $\varepsilon_0(b)>\lceil s/2 \rceil$,
then $\Lambda(\gamma')$ is obtained from $\Lambda(\gamma)$ by
removing a vertical domino.
\end{enumerate}
\end{lemma}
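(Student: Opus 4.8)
The plan is to realize $B^{n,s}$ of type $B_n^{(1)}$ inside its ambient crystal $\hat B^{n,s}$ of type $A_{2n-1}^{(2)}$ via Theorem~\ref{virtual-KR-th-B}, and then transport information about $e_0$ down to the virtual side, exactly mirroring the structure of the proofs of Lemmas~\ref{goode-D} and~\ref{goode-C}. Write $S\colon B^{n,s}\hookrightarrow \hat B^{n,s}$ for the embedding and set $\hat b = S(b)$. Since the virtual operators are $e_i=\hat e_i^2$ for $0\le i\le n-1$ and $e_n=\hat e_n$, we have in particular $e_0 = \hat e_0^2$. The key point is that $\hat B^{n,s}$ is itself a case (AUT) crystal (type $A_{2n-1}^{(2)}$, finite type $C_n$), so Lemma~\ref{goode-D} applies to the \emph{single} operator $\hat e_0$ acting on $\hat b$: each application of $\hat e_0$ changes the outer shape $\Lambda$ of the classical $C_n$-component by at most one vertical domino.

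First I would establish part~\eqref{only-squares-Bn}. Applying Lemma~\ref{goode-D} twice along the string $\hat b,\ \hat e_0(\hat b),\ \hat e_0^2(\hat b)=S(e_0(b))$ shows that the $C_n$ outer shape in $\hat B^{n,s}$ changes by at most two vertical dominoes in passing from $\hat b$ to $S(e_0(b))$. I would then invoke the relationship between the $C_n$ highest weight in the ambient crystal and the $B_n$ highest weight $\gamma$ of $b$ recorded in the proof of Theorem~\ref{virtual-KR-th-B}: the $B_n$-components correspond to $C_n$ highest weights obtained by doubling, and $\Lambda(\gamma)$ is obtained from the ambient shape by removing $2\times 2$ blocks. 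Because the classical decomposition~\eqref{KR-decomp} of $B^{n,s}$ only allows $\Lambda(\gamma)$ and $\Lambda(\gamma')$ to differ by vertical dominoes (the $\diamond$ for type $B_n^{(1)}$, $r=n$, is a vertical domino), this combinatorial constraint forces the two-domino change upstairs to project to a single-domino change in $\Lambda(\gamma')$ relative to $\Lambda(\gamma)$, or to no change at all. This gives part~\eqref{only-squares-Bn}.

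For part~\eqref{rest-part-Bn}, I would use the level bound together with the quantitative statement of Lemma~\ref{goode-D}(ii). Here the ambient $A_{2n-1}^{(2)}$ crystal $\hat B^{n,s}$ has ``size'' $s$, and since $e_0=\hat e_0^2$ we have $\varepsilon_0(b)=\hat\varepsilon_0(\hat b)/2$ (the $n-1$ long roots are doubled in the similarity/virtual construction). Thus the hypothesis $\varepsilon_0(b)>\lceil s/2\rceil$ translates into $\hat\varepsilon_0(\hat b)>s$, which is precisely the hypothesis of Lemma~\ref{goode-D}(ii) for the ambient crystal. That lemma then guarantees that the very first application of $\hat e_0$ already removes a vertical domino from the ambient $C_n$ shape. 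Tracking the remaining application of $\hat e_0$ and projecting back through the $2\times 2$-block correspondence of Theorem~\ref{virtual-KR-th-B}, I would conclude that $\Lambda(\gamma')$ is obtained from $\Lambda(\gamma)$ by removing (rather than adding or fixing) a vertical domino.

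The main obstacle I anticipate is bookkeeping the precise relationship between $\varepsilon_0(b)$ and $\hat\varepsilon_0(\hat b)$ under the virtual construction, and more subtly, verifying that the two-step action $\hat e_0^2$ descends correctly through the doubling/block correspondence so that a single domino is removed downstairs rather than two independent ambient moves producing an inconsistent shape. In particular, one must rule out the possibility that the first $\hat e_0$ removes a domino while the second $\hat e_0$ adds one back (netting to no change), which could contradict~\eqref{rest-part-Bn}; ruling this out should follow from the monotonicity in Lemma~\ref{goode-D} (the $+$'s are added, then the $-$'s removed, in a fixed order along the string) combined with the constraint that $S(e_0(b))$ must again be $\varsigma$-invariant and lie in a legitimate $B_n$-component, exactly as in the case analysis of Lemma~\ref{goode-C}. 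I expect the clean way to finish is to import the $\varsigma$-invariance argument verbatim from the type $C_n^{(1)}$ proof, adapting only the shape of $\diamond$ from horizontal to vertical domino.
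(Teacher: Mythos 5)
Your overall strategy is the paper's: realize $B^{n,s}$ inside $\hat B^{n,s}$ of type $A_{2n-1}^{(2)}$ via Theorem~\ref{virtual-KR-th-B}, use $e_0=\hat e_0^2$, apply Lemma~\ref{goode-D} to each factor $\hat e_0$, and project back through the $2\times 2$-block description of the ambient components. Your part~(\ref{only-squares-Bn}) is essentially identical to the paper's argument and is fine.

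In part~(\ref{rest-part-Bn}), however, there is a concrete gap that you yourself flag but do not close correctly. You translate the hypothesis $\varepsilon_0(b)>\lceil s/2\rceil$ into $\hat\varepsilon_0(\hat b)>s$, which lets you apply Lemma~\ref{goode-D}(ii) to the \emph{first} $\hat e_0$ only; after that application $\hat\varepsilon_0$ has dropped by $1$ and your stated bound no longer guarantees it exceeds $s$, so nothing yet prevents the second $\hat e_0$ from adding a domino back (your ``remove then add'' scenario, which would indeed falsify the claim, since it nets to no change of shape rather than the removal of a domino). The clean fix is quantitative, not qualitative, and is exactly what the paper does: since $\hat\varepsilon_0(\hat b)=2\varepsilon_0(b)$ and $\varepsilon_0(b)\ge\lceil s/2\rceil+1$, one has $\hat\varepsilon_0(\hat b)\ge 2\lceil s/2\rceil+2\ge s+2$. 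Hence even after the first application $\hat\varepsilon_0\ge s+1>s$, so Lemma~\ref{goode-D}(ii) applies to \emph{both} steps, each removes a vertical domino, and the two removed dominoes must assemble into a single $2\times 2$ block because the resulting element must again lie in a legitimate ambient component; this is precisely the removal of one vertical domino from $\Lambda(\gamma)$. Your proposed fallback --- monotonicity along the string plus the requirement that ``$S(e_0(b))$ must again be $\varsigma$-invariant'' --- is both unnecessary and partly misapplied: $\varsigma$-invariance characterizes the image of the virtual construction for type $C_n^{(1)}$ inside $A_{2n+1}^{(2)}$ (Theorem~\ref{virtual-KR-C}), not the image of $B^{n,s}$ of type $B_n^{(1)}$ inside $A_{2n-1}^{(2)}$, which Theorem~\ref{virtual-KR-th-B} describes instead as the set generated from the highest weight element by the virtual operators. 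So the case analysis of Lemma~\ref{goode-C} does not transfer verbatim; keep the shape constraint from Theorem~\ref{virtual-KR-th-B} (and~\eqref{KR-decomp}), but replace the invariance argument by the inequality $\hat\varepsilon_0(\hat b)\ge s+2$.
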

 
\begin{proof}
For part \eqref{only-squares-Bn}, use the fact that by the virtual realization from
Theorem~\ref{virtual-KR-th-B},  $e_0 = \hat e_0^2$. So, it follows from
Lemma~\ref{goode-D} that $e_0$ adds or subtracts at most 2 vertical dominoes. By
Theorem~\ref{virtual-KR-th-B}, the only possibilities are that $e_0$ either leaves the shape
unchanged, or adds or subtracts a single $2 \times 2$ block.
 
Part \eqref{rest-part-Bn} follows since, if $\varepsilon_0(b)>\lceil s/2 \rceil$, then
$\hat \varepsilon_0(b) \geq s+2$. Hence applying $e_0 = \hat e_0^2$ subtracts 2 vertical
dominoes by Lemma~\ref{goode-D}, which as above must fit together as a $2 \times 2$ block.
\end{proof}
 
\subsection{The KR crystals of type $A_{2n}^{(2)}$ and $D_{n+1}^{(2)}$ in case (VIR)} \label{vir3}
 
We now present the virtual crystal construction of the KR crystals $B^{r,s}$ of types
$A_{2n}^{(2)}$ and $D_{n+1}^{(2)}$ of case (VIR), and then prove the analogue of
Lemma~\ref{goode-D} in this setting.
 
\begin{theorem} \cite[Section 4.4, Theorem 5.7]{FOS:2009} \label{virtual-KR-AD}
Fix $n \geq 2$. Let $\hat{B}^{r,2s}$ be the KR crystal corresponding to type
$C_n^{(1)}$, with crystal operators $\hat e_0, \ldots, \hat e_n$.
In each case below, let $V$ be the subset of $\hat B^{r,2s}$ which can
be reached from the classical highest weight element in $\hat B^{r,2s}$ of weight $2s \omega_r$
using  the listed virtual crystal operators:
\begin{enumerate}
\item  \label{virtual-D2}
Let $1\le r<n$. Define $e_i:= \hat e_i^2$ for $1 \leq i \leq n-1$,
$e_0:= \hat e_0$, and $e_n:= \hat e_n$.  Then $V$, along with the operators
$e_0, \ldots, e_n$, is isomorphic to the KR crystal $B^{r,s}$ for type $D_{n+1}^{(2)}$.
\item  \label{virtual-A2e}
Let $1\le r\le n$. Define $e_i:= \hat e_i^2$ for $1 \leq i \leq n$ and
$e_0:= \hat e_0$. Then $V$ along with the operators $e_0, \ldots, e_n$ is
$B^{r,s}$ for type $A_{2n}^{(2)}$.
\end{enumerate}
Here the operators $f_i$ and $\hat f_i$ are defined by the condition $f_i(b) = b'$ if and only
if $e_i(b')=b$.
\end{theorem}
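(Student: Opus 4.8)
The plan is to follow the virtual crystal method of~\cite{OSS:2003, OSS:2002, FOS:2009}, using Kashiwara's similarity technique~\cite{Kashiwara:1996} for the classical part and the uniqueness result Lemma~\ref{uniqueness} to pin down the affine structure. This parallels the arguments behind the earlier virtual realizations (Theorems~\ref{virtual-KR-C} and~\ref{virtual-KR-th-B}), and I would treat both parts of the statement uniformly through their scaling factors. First I would record the diagram foldings underlying the two embeddings into $C_n^{(1)}$. The scaling factors are $\gamma_i=2$ for $1\le i\le n-1$ and $\gamma_0=\gamma_n=1$ in the $D_{n+1}^{(2)}$ case, and $\gamma_i=2$ for $1\le i\le n$ with $\gamma_0=1$ in the $A_{2n}^{(2)}$ case; these are exactly the exponents in the definitions of the $e_i$. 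I would fix the associated embedding of weight lattices under which a target simple root $\alpha_i$ is sent to $\gamma_i\hat\alpha_i$, so that $e_i=\hat e_i^{\gamma_i}$ shifts the target weight by $-\alpha_i$. The factor $2s$ on the ambient side reflects this rescaling, since $\gamma_r=2$ at the relevant node $r$ forces $s\omega_r\mapsto 2s\omega_r$.

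The crucial step is the \emph{alignment} condition: for every $b\in V$ and every $i$ one must have $\gamma_i\mid\hat\varepsilon_i(b)$ and $\gamma_i\mid\hat\varphi_i(b)$. Granting this, the virtual string lengths $\varepsilon_i(b)=\hat\varepsilon_i(b)/\gamma_i$ and $\varphi_i(b)=\hat\varphi_i(b)/\gamma_i$ are integers, $V$ is closed under each $e_i=\hat e_i^{\gamma_i}$ and the corresponding $f_i$, and the general virtual crystal theorem of~\cite{OSS:2003} shows that $V$ together with the $e_i,f_i$ is a regular abstract crystal (Definition~\ref{abstract-C}) of the target affine type. I would prove alignment by induction along the generators: the base case is the classical highest weight vector of weight $2s\omega_r$, which is annihilated by all $\hat e_i$, and the inductive step uses the tensor product rule together with the explicit behavior of $\hat e_i$ on the $C_n^{(1)}$ ambient KR crystal, much as in the proofs of Lemmas~\ref{goode-C} and~\ref{goode-Bn}.

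It then remains to identify the abstract crystal $V$ with the specific KR crystal $B^{r,s}$. For this I would compute the classical decomposition of $V$ as a $\{1,\dots,n\}$-crystal, where the squared operators realize the relevant finite-type crystal via the similarity method~\cite{Kashiwara:1996}. The $\pm$-diagram branching combinatorics of Section~\ref{pm-diagrams} then translate the ambient shapes into the removal of the appropriate $\diamond$ from~\eqref{diamond-eq} (a horizontal domino for $D_{n+1}^{(2)}$, a box for $A_{2n}^{(2)}$), matching the predicted decomposition~\eqref{KR-decomp}. Once $V\cong B^{r,s}$ as a $\{1,\dots,n\}$-crystal is established, Lemma~\ref{uniqueness} upgrades this directly to an isomorphism of affine crystals, completing the identification in both cases.

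The hard part will be the alignment verification together with the accompanying decomposition match: one must show that squaring an ambient operator never splits a string on $V$, equivalently that the relevant $\hat\varepsilon_i$ are genuinely divisible by $\gamma_i$, and that the resulting even string lengths reorganize the ambient $\pm$-diagrams into the shapes of~\eqref{KR-decomp}. This is precisely where the detailed $\pm$-diagram and branching structure of the $C_n^{(1)}$ ambient crystal from Lemma~\ref{S5.1} is indispensable; everything after alignment is a bookkeeping comparison of characters.
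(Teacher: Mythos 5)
You should note at the outset that the paper contains no proof of this statement: it is quoted, with attribution, from \cite[Section 4.4, Theorem 5.7]{FOS:2009}, so there is no in-paper argument to compare against. Your outline is nonetheless a faithful reconstruction of how the cited result is established, and of the architecture this paper itself uses for neighboring statements (cf.\ the proof of Theorem~\ref{virtual-KR-th-B}, and Lemma~\ref{D-extra-auts} followed by Lemma~\ref{uniqueness}): similarity-type scaling factors matching the stated operators, alignment, a classical decomposition matching \eqref{KR-decomp}, and then a uniqueness result promoting a $\{1,\ldots,n\}$-crystal isomorphism of regular crystals to an affine one.

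A few points would need tightening before this counts as a proof. First, circularity must be addressed: Lemma~\ref{uniqueness} in types $D_{n+1}^{(2)}$ and $A_{2n}^{(2)}$ rests on the uniqueness statements of \cite[Sections 5.2 and 6.1]{FOS:2009}, i.e.\ on the very paper whose Theorem 5.7 you are reproving; the argument is non-circular only because those uniqueness statements concern abstract regular crystals and are proved independently of the virtual realization, and this should be said explicitly. Second, \cite{OSS:2003} does not supply an off-the-shelf theorem that an aligned virtual subset is a regular abstract crystal of the target type; regularity of $V$ in the sense of Definition~\ref{abstract-C} is itself one of the things to be proved, e.g.\ by applying Kashiwara's similarity method \cite{Kashiwara:1996} pair-by-pair of indices inside the regular ambient crystal, with the pairs involving node $0$ requiring genuine work. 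Third, two local slips: the classical highest weight element of weight $2s\omega_r$ is annihilated by $\hat e_1,\ldots,\hat e_n$ but not by $\hat e_0$ (alignment at $i=0$ is instead trivial because $\gamma_0=1$); and the ambient $\hat B^{r,2s}$ is a single KR crystal, not a tensor product, so the tensor product rule cannot drive your induction --- what replaces it is the explicit $\pm$ diagram analysis, exactly as in the proofs of Lemmas~\ref{goode-C} and~\ref{goode-A2D2}. None of this changes the verdict that your strategy is the standard and correct one; the deferred ``hard parts'' you identify are indeed where the content of \cite{FOS:2009} lies.
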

 
\begin{lemma} \label{goode-A2D2}
Consider $B^{r,s}$ of type $D_{n+1}^{(2)}$ with $r <n$, or of type $A_{2n}^{(2)}$ with $1\le r\le n$.
Fix $b \in B^{r,s}$, and assume $b$ lies in the classical (type $B_n$ or $C_n$, respectively)
component $B(\gamma)$, and $e_0(b)$  lies in the classical component $B(\gamma')$. Then
\begin{enumerate}
\item \label{only-dominoes} $\Lambda(\gamma')$ is either equal to $\Lambda(\gamma)$, or else
is obtained from $\Lambda(\gamma)$ by adding or removing a single box.
\item 
If $\varepsilon_0(b)> s$,
then $\Lambda(\gamma')$ is obtained from $\Lambda(\gamma)$ by
removing a box.
\end{enumerate}
\end{lemma}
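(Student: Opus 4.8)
The plan is to reduce Lemma~\ref{goode-A2D2} to the already-established Lemma~\ref{goode-C} (for the $D_{n+1}^{(2)}$ virtual construction sitting inside $C_n^{(1)}$, itself built inside $A_{2n+1}^{(2)}$), exactly as Lemma~\ref{goode-Bn} was reduced to Lemma~\ref{goode-D}. The key observation is that in both cases of Theorem~\ref{virtual-KR-AD} the ambient KR crystal is $\hat B^{r,2s}$ of type $C_n^{(1)}$, and the relation between the classical decompositions is governed by a doubling of the rectangle width: the crystal $B^{r,s}$ we care about sits inside $\hat B^{r,2s}$, whose width-$2s$ rectangle decomposes by removing horizontal dominoes per Lemma~\ref{goode-C}. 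So the first step is to record, in each of the two cases, the precise relation between the classical shape $\Lambda(\gamma)$ of $b \in B^{r,s}$ and the ambient classical shape $\hat\Lambda$ of $\hat b = S(b)$, and the relation between $\varepsilon_0(b)$ and $\hat\varepsilon_0(\hat b)$.

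First I would treat the $D_{n+1}^{(2)}$ case, $r<n$. Here $e_0 = \hat e_0$ (no squaring on the $0$-node), $e_i = \hat e_i^2$ for $1\le i\le n-1$, and $e_n=\hat e_n$. Since a single application of $\hat e_0$ in the type $C_n^{(1)}$ ambient crystal moves $b$ between classical components whose shapes differ by at most one horizontal domino (Lemma~\ref{goode-C}, part~\eqref{only-dominoes-gC}), I would argue that the virtual/folding identification halves the horizontal domino of the ambient $C_n$ shape down to a single box of the type $B_n$ shape $\Lambda(\gamma)$ — this is the analogue of the $2\times2$-block-to-vertical-domino passage in Lemma~\ref{goode-Bn}, but now in the transverse (horizontal) direction. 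The classical decomposition of $B^{r,s}$ for $D_{n+1}^{(2)}$ (diamond $=$ box, per~\eqref{diamond-eq}) is precisely the statement that only single boxes can be added or removed, so combining this with the ambient domino statement yields part~\eqref{only-dominoes}. For part~(ii), I would use $\varepsilon_0(b)=\hat\varepsilon_0(\hat b)$ (since $e_0=\hat e_0$ with no squaring): the hypothesis $\varepsilon_0(b)>s$ becomes $\hat\varepsilon_0(\hat b)>s = \lceil 2s/2\rceil$, which is exactly the threshold in Lemma~\ref{goode-C} part~(ii) for the ambient crystal $\hat B^{r,2s}$, forcing the removal of a horizontal domino upstairs and hence of a box downstairs.

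The $A_{2n}^{(2)}$ case, $1\le r\le n$, is parallel but with $e_n=\hat e_n^2$ as well; again $e_0=\hat e_0$, so the same threshold bookkeeping $\varepsilon_0(b)=\hat\varepsilon_0(\hat b)>s=\lceil 2s/2\rceil$ applies verbatim, and the classical decomposition (diamond $=$ box) again restricts shape changes to single boxes, so parts~(i) and~(ii) follow by the identical argument. The main obstacle I anticipate is \emph{not} the threshold arithmetic, which is clean, but rather verifying precisely how the $\varsigma$-invariance and the squaring of the non-zero crystal operators force the ambient horizontal domino to descend to a single box of the folded shape $\Lambda(\gamma)$ in each case — i.e.\ pinning down the exact correspondence between the ambient $C_n^{(1)}$ classical components and the folded $B_n$ or $C_n$ components. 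This is a careful case analysis of the $\pm$-diagram moves (cases (a),(b),(c) as in the proof of Lemma~\ref{goode-C}) under the virtual embedding, and it is where one must be most careful that the width-$2s$ constraint upstairs translates correctly to the width-$s$ constraint downstairs.
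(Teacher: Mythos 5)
Your strategy is the same as the paper's: realize $B^{r,s}$ inside the ambient crystal $\hat B^{r,2s}$ of type $C_n^{(1)}$ via Theorem~\ref{virtual-KR-AD}, use $e_0=\hat e_0$ (so $\varepsilon_0(b)=\hat\varepsilon_0(S(b))$, and the hypothesis $\varepsilon_0(b)>s$ is exactly the threshold $\lceil 2s/2\rceil=s$ of Lemma~\ref{goode-C} applied to the width-$2s$ ambient crystal), and then descend the ambient horizontal-domino statement to a single-box statement for the folded shape. For that last descent the paper does not redo the $\pm$-diagram case analysis you anticipate as the ``main obstacle''; it simply cites the description of the classical components of these virtual crystals from~\cite[Lemma 4.9]{FOS:2009}, under which adding or removing a horizontal domino upstairs corresponds to adding or removing a single box of the folded shape. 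So that part of your plan is sound, and is settled by citation rather than computation.

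The one genuine gap is the case $A_{2n}^{(2)}$ with $r=n$, which your ``applies verbatim'' claim glosses over. Lemma~\ref{goode-C} is stated only for $B^{r,s}$ of type $C_n^{(1)}$ with $r<n$; moreover, for $r=n$ the ambient object $\hat B^{n,2s}$ appearing in Theorem~\ref{virtual-KR-AD} is \emph{not} the KR crystal $B^{n,2s}$ of type $C_n^{(1)}$ --- that crystal is the classically irreducible one of case (IRR), constructed quite differently and with trivial $D$-function behavior. So for $r=n$ your reduction has, as written, nothing to invoke. The paper closes this by observing that the construction of Theorem~\ref{virtual-KR-C} (the $\varsigma$-fixed points inside the type $A_{2n+1}^{(2)}$ crystal, with the virtual operators) still produces a combinatorial type $C_n^{(1)}$ crystal when $r=n$, and that the proof of Lemma~\ref{goode-C} goes through unchanged for it. With that remark added, your argument coincides with the paper's proof; without it, the case $r=n$ of type $A_{2n}^{(2)}$ is unproven.
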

 
\begin{proof}
Let $\hat B^{r,2s}$ be the ambient KR crystal of type $C_n^{(1)}$ with crystal operators $\hat e_i$
as in Theorem~\ref{virtual-KR-AD}. Denote the virtualization map $S: B^{r,s} \rightarrow V
\subseteq \hat B^{r,2s}$. Recall that $V$ is the subset of $\hat B^{r,2s}$ generated by
the element of weight $2s \omega_r$ by applying the virtual crystal operators $\hat e_i$:
\begin{enumerate}
\item In type $D_{n+1}^{(2)},$ $e_0 = \hat e_0,$ $e_n = \hat e_n$, and $e_i = \hat e_i^2$
for $1 \leq i \leq n-1$.
\item  In type $A_{2n}^{(2)},$ $e_0 = \hat e_0,$ and $e_i = \hat e_i^2$ for $1 \leq i \leq n$.
\end{enumerate}
By Lemma~\ref{goode-C}, $\hat e_0$ changes the classical component of the underlying KR
crystal of type $C_n^{(1)}$ by adding or subtracting at most one horizontal domino, and furthermore
if $\hat \varepsilon_0(S(b)) > s$, then $\hat e_0$ always removes a horizontal domino. The result
now follows immediately from the description of the classical components of these virtual crystals
as given in~\cite[Lemma 4.9]{FOS:2009}.
 
Note that in the case of $A_{2n}^{(2)}$ with $r=n$, $\hat B^{n,2s}$ is not a KR crystal of type
$C_n^{(1)}$. However, Theorem~\ref{virtual-KR-C} still gives a type $C_n^{(1)}$ combinatorial
crystal in this case and Lemma~\ref{goode-C} still holds.
So the proof still goes though in this case.
\end{proof}

\section{Energy functions}
\label{section.energy}
 
We define two energy functions on tensor products of KR crystals. The
function $E^\intrinsic$ is given by a fairly natural ``global" condition on tensor products of level-$\ell$ KR crystals. 
The function $D$ is defined by summing up combinatorially defined ``local" contributions, and makes sense for 
general tensor products of KR crystals. It was suggested  (but not proven) in~\cite[Section 2.5]{OSS:2002} that, 
when $E^\intrinsic$ is defined, these two functions agree up to a shift. This will be proven in
Theorem~\ref{E=D} below.
 
\subsection{The function $E^\intrinsic$} \label{Eint:section}
 
\begin{definition} \label{def:u tau}
For each node $r\in I\setminus\{0\}$ and each $\ell \in \bz_{>0}$, let $u_{r, \ell c_r}$ be the
unique element of $B^{r,\ell c_r}$ such that $\varepsilon(u_{r, \ell c_r})=\ell \Lambda_0$
(which exists as $B^{r, \ell c_r}$ is perfect).
\end{definition}
 
The following is essentially the definition of a ground state path from \cite{KMN1:1992}.
 
\begin{definition} \label{def:uB}
Let $B= B^{r_N, \ell c_{r_N}} \otimes \cdots \otimes B^{r_1, \ell c_{r_1}}$ be a composite level-$\ell$ 
KR crystal. Define $u_B = u_B^N \otimes \cdots \otimes u_B^1$ to be the unique element of $B$ such that
\begin{enumerate}
\item $u_B^1 = u_{r_1,\ell c_{r_1}}$ and
\item for each $1 \leq j < N$, $\varepsilon(u_B^{j+1}) = \varphi(u_B^j)$.
\end{enumerate}
This $u_B$ is well-defined by condition~\eqref{perfect:bij} in Definition~\ref{def:perfect} of a perfect crystal.
The element $u_B$ is called the \textit{ground state path} of $B$.
\end{definition}
 
\begin{definition} \label{def:intrinsic}
Let $B$ be a composite KR crystal of level $\ell$ and consider $u_B$ as in Definition~\ref{def:uB}. The
\textit{intrinsic energy function} $E^{\intrinsic}$ on $B$ is defined by setting $E^{\intrinsic}(b)$ to be the minimal
number of $f_0$ in a string $f_{i_N} \cdots f_{i_1}$ such that  $f_{i_N} \cdots f_{i_1}(u_B)=b$.
\end{definition}
 
\subsection{The $D$ function} \label{D-section}
 
\begin{definition} \label{primeD-def}
The $D$-function on $B^{r,s}$ is the function defined as follows:
\begin{enumerate}
\item $D_{B^{r,s}} : B^{r,s} \to \Z$ is constant on all classical components.
\item On the component $B(\lambda)$, $D_{B^{r,s}}$ records the maximum number of $\diamond$
that can be removed from $\Lambda(\lambda)$ such that the result is still a (generalized) partition,
where $\diamond$ is as in \eqref{diamond-eq}.
\end{enumerate}
In those cases when $\diamond = \emptyset$, this is interpreted as saying that $D_{B^{r,s}}$ is the constant function $0$.
\end{definition}
 
Let $B_1$, $B_2$ be two affine crystals with generators $v_1$ and $v_2$, respectively, such
that $B_1\otimes B_2$ is connected and $v_1\otimes v_2$ lies in a one-dimensional weight space.
By~\cite[Proposition 3.8]{LOS:2010}, this holds for any two
KR crystals. The generator $v$ for the KR crystal $B^{r,s}$ is chosen to be the unique element of classical weight 
$s\omega_r$.
 
The \textit{combinatorial $R$-matrix}~\cite[Section 4]{KMN1:1992} is the unique
crystal isomorphism
\begin{equation*}
    \sigma : B_2 \otimes B_1 \to B_1 \otimes B_2.
\end{equation*}
By weight considerations, this must satisfy $\sigma(v_2 \otimes v_1) = v_1 \otimes v_2$.
 
As in~\cite{KMN1:1992} and~\cite[Theorem 2.4]{OSS:2003}, there is a function
$H=H_{B_2,B_1}:B_2\otimes B_1\rightarrow\Z$, unique up to global additive constant, such that,
for all $b_2\in B_2$ and $b_1\in B_1$,
\begin{equation} \label{eq:local energy}
 H(e_i(b_2\otimes b_1))=
 H(b_2\otimes b_1)+
 \begin{cases}
   -1 & \text{if $i=0$ and LL,}\\
   1 & \text{if $i=0$ and RR,}\\
   0 & \text{otherwise.}
 \end{cases}
\end{equation}
Here LL (resp. RR) indicates that $e_0$ acts on the left (resp. right) tensor factor in both
$b_2\otimes b_1$ and $\sigma (b_2 \otimes b_1)$. When $B_1$ and $B_2$ are KR crystals, we normalize
$H_{B_2, B_1}$ by requiring $H_{B_2, B_1}(v_2 \otimes v_1)= 0$, where $v_1$ and $v_2$ are the generators
defined above.
 
\begin{definition} \label{DB-def}
For $B=B^{r_N,s_N} \otimes \cdots \otimes B^{r_1,s_1}$, $ 1 \leq i \le N$ and $i< j \leq N,$ set
\begin{equation*}
        D_i :=  D_{B^{r_i,s_i}}\sigma_1 \sigma_2 \cdots \sigma_{i-1} \quad \text{and} \quad
        H_{j,i}:= H_{i} \sigma_{i+1} \sigma_{i+2} \cdots \sigma_{j-1},
\end{equation*}
where $\sigma_i$ and $H_i$ act on the $i$-th and $(i+1)$-st tensor factors and
$D_{B^{r_i,s_i}}$ is the $D$-function on the rightmost tensor factor $B^{r_i,s_i}$ as given in Definition \ref{primeD-def}. 
The $D$-function $D_B : B \to \Z$ is defined as
\begin{equation}
       D_B := \sum_{N \geq j > i \geq 1} H_{j,i} + \sum_{i=1}^N D_i.
\end{equation}
Where it does not cause confusion, we shorten $D_B$ to simply $D$.
\end{definition}

\section{Perfect KR crystals and Demazure crystals}
\label{section.demazure}
 
We now state a precise relationship between KR crystals and Demazure
crystals (see Theorem~\ref{FSS-theorem}). This was proven  by Fourier, Schilling, and
Shimozono~\cite{FSS:2007}, under a few additional assumptions on the KR crystals
since at the time the existence and combinatorial models for KR crystals did not yet exist.
Here we point out that most of the assumptions in~\cite{FSS:2007} follow 
from the later results of~\cite{Okado:2007,OS:2008,FOS:2009,FOS:2010} showing that the relevant KR crystals 
exist, have certain symmetries related to Dynkin diagram automorphisms, and are perfect. 
In the special cases of type $A_{2n}^{(2)}$ and the exceptional nodes in type $D_n^{(1)}$,
we give separate proofs as the assumptions from~\cite{FSS:2007} do not follow directly from the above
papers or need to be slightly modified.

\begin{theorem} \label{FSS-theorem}
Let  $B= B^{r_N, \ell c_{r_N}} \otimes \cdots \otimes B^{r_1, \ell c_{r_1}}$ be a level-$\ell$ composite KR crystal
of non-exceptional type. Define $\lambda = -({c_{r_1} \omega_{r^*_1}}+ \cdots + {c_{r_N} \omega_{r_N^*}})$,
where $r^*$ is defined by $\omega_{r^*}=-w_0(\omega_r)$ with $w_0$ the longest element of $\overline{W}$, 
and write $t_\lambda \in T(\widetilde M) \subset \widetilde W$
as $t_\lambda  = v \tau$. Then there is a unique isomorphism of affine crystals
\begin{equation*}
       j: B(\ell \Lambda_{\tau(0)})  \rightarrow B \otimes B(\ell \Lambda_0).
\end{equation*}
This satisfies
\begin{equation*}
	j(u_{\ell \Lambda_{\tau(0)}})= u_B \otimes u_{\ell \Lambda_0},
\end{equation*}
where $u_B$ is the distinguished element from Definition~\ref{def:uB}, and
\begin{equation} \label{DRK}
       j \left(  B_v(\ell \Lambda_{\tau(0)}) \right)
       = B \otimes u_{\ell \Lambda_0},
\end{equation}
where $B_v(\ell \Lambda_{\tau(0)})$ is the Demazure crystal corresponding to the translation $t_\lambda$
as defined in Section~\ref{DC-sec}.
\end{theorem}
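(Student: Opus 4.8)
The plan is to deduce the theorem from \cite[Theorem 4.4]{FSS:2007}, whose proof already yields all three conclusions under a list of hypotheses on the factors $B^{r_i,\ell c_{r_i}}$, and then to check that these hypotheses are now available in every non-exceptional type. The hypotheses needed in \cite{FSS:2007} are that each $B^{r_i,\ell c_{r_i}}$ exists as a $U_q'(\g)$-crystal with the classical decomposition \eqref{KR-decomp}, that it is perfect of level $\ell$, and that it carries the symmetry induced by the relevant affine Dynkin diagram automorphism. Granting these, the isomorphism $j$ is built as the iterated ground-state-path isomorphism of \cite{KMN1:1992}: perfectness of a single factor gives $B^{r_i,\ell c_{r_i}} \otimes B(\ell\Lambda) \cong B(\ell\Lambda^{\sharp})$ for the appropriate level-$\ell$ dominant weight $\Lambda^\sharp$, and composing these $N$ identifications produces an isomorphism whose source is $B(\ell\Lambda_{\tau(0)})$. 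The equalities $\varepsilon(u_B)=\ell\Lambda_0$ and $\varphi(u_B)=\ell\Lambda_{\tau(0)}$ come directly from Definitions~\ref{def:u tau} and~\ref{def:uB} together with condition~\eqref{perfect:bij} of perfectness, and the identification of the composed crystal automorphism with the element $\tau$ of $t_\lambda=v\tau$ is part of the extended affine Weyl group bookkeeping in \cite{FSS:2007}; together these give $j(u_{\ell\Lambda_{\tau(0)}})=u_B\otimes u_{\ell\Lambda_0}$.

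For the Demazure identity \eqref{DRK} I would use \eqref{E:fwDem} to write $B_v(\ell\Lambda_{\tau(0)})=f_v(u_{\ell\Lambda_{\tau(0)}})$ for a reduced word of $v$, expand $v$ by means of \eqref{E:s0} and the isomorphism \eqref{E:Waffiso}, and transport the resulting string of $f_i$'s through $j$; one checks that it fills out precisely one copy of $B$ sitting over the ground state $u_{\ell\Lambda_0}$. This is the combinatorial core of \cite[Theorem 4.4]{FSS:2007} and goes through verbatim once the three hypotheses are in place, so no recomputation is required at this point.

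It therefore remains to supply the hypotheses. Existence and the decomposition \eqref{KR-decomp} are given by \cite{Okado:2007,OS:2008} (together with \cite{Hernandez:2010} in the twisted cases), and perfectness at the multiples $\ell c_r$ is \cite[Theorem 1.2]{FOS:2010}. For the symmetry hypothesis I would combine the Dynkin-automorphism realizations of Theorem~\ref{vertical-realization} and the virtual constructions of Section~\ref{section:realization} with the uniqueness statement Lemma~\ref{uniqueness}: the latter shows that a regular abstract crystal which matches $B^{r,s}$ as a classical crystal is forced to agree with it as an affine crystal, so it suffices to exhibit a classical-crystal model of the $\tau$-twisted object and match characters. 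In most types this is exactly what \cite{FOS:2009} supplies. In the two cases not directly covered, namely type $A_{2n}^{(2)}$ for all $r$ and the spin nodes $r=n-1,n$ of $D_n^{(1)}$, I would argue by hand: construct the candidate twisted crystal from the explicit model (the virtual realization of Theorem~\ref{virtual-KR-AD} for $A_{2n}^{(2)}$, and the case (IRR) model for the spin nodes), verify that it is a regular abstract crystal with the correct classical and $I\setminus\{j\}$ characters, and invoke Lemma~\ref{uniqueness}.

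The main obstacle is precisely this last verification in the two special families. For $A_{2n}^{(2)}$ the crystal is realized virtually inside a $C_n^{(1)}$ crystal with folding $e_i=\hat e_i^2$, $e_0=\hat e_0$, so the automorphism $\tau$ must be tracked through the virtualization map and checked to respect the folding; the control over how $e_0$ moves between classical components furnished by Lemma~\ref{goode-A2D2} (and, underneath it, Lemma~\ref{goode-C}) is what makes this bookkeeping feasible. For the spin nodes of $D_n^{(1)}$ the difficulty is instead that the relevant classical components are irreducible but indexed by spin weights lying outside the ordinary Kashiwara--Nakashima tableau framework, so the character computation underlying Lemma~\ref{uniqueness} must be carried out with the spin realizations before uniqueness can be applied.
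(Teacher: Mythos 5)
Your main-line reduction (invoke \cite[Theorems 4.4 and 4.7]{FSS:2007} and supply its Assumption~1 from the existence, perfectness, and automorphism results of \cite{Okado:2007,OS:2008,FOS:2009,FOS:2010}) is exactly the paper's route, and it works for types $A_n^{(1)}$, $B_n^{(1)}$, $C_n^{(1)}$, $A_{2n-1}^{(2)}$, $D_{n+1}^{(2)}$ and the nodes $r\le n-2$ of $D_n^{(1)}$ (this is Lemma~\ref{A:KR}). The genuine gap is in your handling of the two special families, and it is not a matter of harder bookkeeping: in both of them the hypothesis you propose to verify is either vacuous or false, so ``build the twisted crystal and invoke Lemma~\ref{uniqueness}'' cannot succeed. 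In type $A_{2n}^{(2)}$ the affine Dynkin diagram has \emph{no} nontrivial automorphisms ($\Sigma$ is trivial and $\tau=\mathrm{id}$ in Figure~\ref{u-table}), so there is no automorphism to ``track through the virtualization'' and no twisted crystal to compare with $B^{r,s}$. What \cite{FSS:2007} actually requires in this type is an assumption of a completely different, purely combinatorial nature, whose only use is in \cite[Lemma 4.3]{FSS:2007}: for the classically highest weight element $u(s\omega_k)$ and $y=S_1\cdots S_k(u(s\omega_k))$ one must check $f_0^s\,y=u(s\omega_{k+1})$ in the explicit realization of Theorem~\ref{virtual-KR-AD}. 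That is what the paper verifies (Lemma~\ref{T4.7-A}). Your proposed tool, Lemma~\ref{goode-A2D2}, only controls \emph{which classical component} $e_0(b)$ lands in; it cannot identify the element $f_0^s\,y$, so it is insufficient even as a substitute.

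For the spin nodes $r=n-1,n$ of $D_n^{(1)}$ the situation is worse: the symmetry demanded by \cite[Assumption 1]{FSS:2007} is an involution of $B^{r,s}$ intertwining $f_0$ and $f_1$, and such an involution does not exist. For instance, for $B^{4,1}$ of $D_4^{(1)}$ a direct computation shows that as a $\{0,2,3,4\}$-crystal it is the fundamental crystal at node $3$, so $(B^{4,1})^{\varsigma_{0,1}}$ has classical decomposition $B(\omega_3)$, not $B(\omega_4)$; the characters do not match, and Lemma~\ref{uniqueness} has nothing to apply to. This non-existence is precisely why these nodes are excluded from Lemma~\ref{A:KR}(ii), and no character computation with spin realizations can repair it. The paper's actual argument (Lemma~\ref{T4.7-D}) is different in kind: since $B^{n,\ell}$ is classically irreducible and $t_{-\omega_{n^*}}=w_0^n\tau_n$ with $w_0^n$ lying in the \emph{finite} Weyl group $\oW$, the Demazure crystal $B_{w_0^n}(\ell\Lambda_n)=B_{w_0}(\ell\Lambda_n)$ is swept out by classical lowering operators alone by \cite[Proposition 3.2.3]{Kashiwara:1993}, and the \cite{KMN1:1992} isomorphism $j$ then yields \eqref{DRK} at once; tensor products follow by induction using Lemma~\ref{D-extra-auts}, which is where Lemma~\ref{uniqueness} is legitimately used --- to show $(B^{r,s})^\tau\cong B^{r,s}$ for $\tau\in\Sigma$, not for the automorphism $\varsigma_{0,1}$ required by Assumption~1. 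You would need to replace your paragraph on the special cases with arguments of this shape.
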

 
We delay the proof of Theorem~\ref{FSS-theorem} until the end of this section.

\begin{remark}
For non-exceptional types, we have $r^*=r$ except for type $A_n^{(1)}$ where $r^*=n+1-r$, and
$D_n^{(1)}$ for $n$ odd where $n^*=n-1$ and $(n-1)^*=n$.
\end{remark}
 
\begin{lemma}
\label{A:KR}
Assume $\g$ is of non-exceptional type, and let $B^{r,\ell c_r}$ be a level-$\ell$ KR crystal. Then:
\begin{enumerate}
\item \label{A:u}
There is a unique element $u\in B^{r,\ell c_r}$ such that
\begin{align*}
        \varepsilon(u)=\ell \Lambda_0 \quad \text{and} \quad \varphi(u) = \ell \Lambda_{\tau(0)},
\end{align*}
where $t_{-c_r\omega_{r^*}} = v \tau$ with $v\in W$ and $\tau\in \Sigma$.
\item \label{A:auto}
Let $\varsigma$ be the Dynkin diagram automorphism defined by
\begin{itemize}
\item  For type $A_n^{(1)}$, $i\mapsto i+1 \pmod{n+1}$.
\item  For types $B_n^{(1)}$, $D_n^{(1)}$, $A_{2n-1}^{(2)}$, exchange nodes
0 and 1.
\item For types $C_n^{(1)}$ and $D_{n+1}^{(2)}$,  $i\mapsto n-i$ for all $i\in I$.
\end{itemize}
In all cases other than type $A_{2n}^{(2)}$ and  
$B^{n-1,s}$ and $B^{n,s}$ for type $D_n^{(1)}$, there
is a unique involution of $B^{r,\ell c_r}$, also denoted $\varsigma$, such that for all
$b \in B^{r,\ell c_r}$ and $i \in I$,
\begin{equation*}
f_i(b) = \varsigma^{-1} \circ  f_{\varsigma(i)} \circ \varsigma(b).
\end{equation*}
\end{enumerate}
\end{lemma}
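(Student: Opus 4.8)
The plan is to treat the two parts separately. For part~\eqref{A:u} I would get existence and uniqueness of $u$ directly from perfectness and then pin down $\varphi(u)$ by a weight computation. Since $B^{r,\ell c_r}$ is perfect of level $\ell$ by~\cite{FOS:2010}, applying condition~\eqref{perfect:bij} of Definition~\ref{def:perfect} to $\Lambda=\ell\Lambda_0\in P^+_\ell$ produces a unique $u:=b_{\ell\Lambda_0}$ with $\varepsilon(u)=\ell\Lambda_0$, which settles existence and uniqueness. It then remains to show $\varphi(u)=\ell\Lambda_{\tau(0)}$. Here $\varphi(u)=\sum_i\varphi_i(u)\Lambda_i$ is dominant by construction, and it has level $\ell$: as $B^{r,\ell c_r}$ is finite, every weight $\wt(b)$ has level $0$, whence $\lev\varphi(u)=\lev\varepsilon(u)=\ell$. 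Because node $0$, and hence its image $\tau(0)$ under the diagram automorphism $\tau$, has unit comark, a level-$\ell$ dominant weight is determined by its classical projection; so it suffices to prove that the classical weight $\overline{\wt(u)}=\overline{\varphi(u)}$ (the equality holding since $\overline{\varepsilon(u)}=0$) equals $\ell\omega_{\tau(0)}$, with the convention $\omega_0=0$.

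This last identification is the crux, and the step I expect to be the main obstacle, since it must bridge the crystal-theoretic element $u$ and the Weyl-group datum $\tau$. On one side, $\tau$ is read off from $t_{-c_r\omega_{r^*}}=v\tau$ in $\widetilde W=\oW\ltimes T(\widetilde M)$, so $\tau(0)$ is governed by the class of $-c_r\omega_{r^*}$ in $\widetilde M/M$ together with the description of $\Sigma$ in Lemma~\ref{Sigma-alt}. On the other side, $u$ is classically highest weight (as $\varepsilon_i(u)=0$ for $i\neq0$), hence the highest weight vector of one of the components in the decomposition~\eqref{KR-decomp}, and I expect the minimality $\varepsilon_0(u)=\ell$ to single out the smallest such component, namely the one obtained by removing the maximal number of $\diamond$ from $\Lambda(\ell c_r\omega_r)$. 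Using the explicit models of Section~\ref{section:realization} I would check that this component does carry $\varepsilon_0=\ell$, read off its highest weight from~\eqref{diamond-eq} and~\eqref{KR-decomp}, and verify case by case over the families (IRR), (AUT) and (VIR) that it equals $\ell\omega_{\tau(0)}$. It is at this point that the explicit realizations seem unavoidable and a uniform argument out of reach.

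For part~\eqref{A:auto}, I would exhibit $\varsigma$ concretely in each family. The principle is that the listed diagram automorphism $\varsigma$ is a symmetry of the construction of $B^{r,\ell c_r}$, so relabeling its crystal operators by $\varsigma$ yields an isomorphic crystal and the intertwiner is the map we want. In case (AUT) this map is exactly the involution of Definition~\ref{varsigma-def}, which by Theorem~\ref{vertical-realization} already satisfies $f_0=\varsigma\circ f_1\circ\varsigma$ while fixing $f_2,\dots,f_n$, realizing the swap of nodes $0$ and $1$. In type $A_n^{(1)}$ it is the promotion operator, which intertwines $f_i$ with $f_{i+1}$. For the reflection $i\mapsto n-i$ in types $C_n^{(1)}$ and $D_{n+1}^{(2)}$, and for the remaining virtually constructed crystals (such as $B^{n,s}$ of type $B_n^{(1)}$), the symmetry descends from the corresponding symmetry of the ambient crystal used in the virtual realizations.

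Uniqueness is then formal. If $\varsigma'$ satisfies the same relation, then $\varsigma'\varsigma^{-1}$ commutes with every $f_j$, hence preserves all $\varepsilon_j,\varphi_j$ and so the weight; since $B^{r,\ell c_r}$ is connected as an affine crystal and its generator (the unique element of classical weight $\ell c_r\omega_r$) is the unique element of its affine weight, $\varsigma'\varsigma^{-1}$ must fix that generator and is therefore the identity. Finally, $\varsigma^2$ commutes with all $f_i$ whenever $\varsigma$ squares to the identity as a diagram automorphism, so the same connectedness argument yields $\varsigma^2=\id$; this is the step giving the involution property, which I would check directly in each of the relevant cases.
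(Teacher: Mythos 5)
Your proposal follows essentially the same route as the paper's own proof: part~(i) is obtained from perfectness (existence and uniqueness of $u$ with $\varepsilon(u)=\ell\Lambda_0$) followed by a case-by-case identification of the classical component of $u$ and of $\tau$ via the explicit models --- precisely the content of the paper's Figure~\ref{u-table} --- and part~(ii) invokes the same explicit symmetries (promotion in type $A_n^{(1)}$, the involution of Definition~\ref{varsigma-def} in case (AUT), and the automorphisms induced on the virtual realizations from~\cite{FOS:2009}). The connectedness argument you add for uniqueness and the involution property is sound (the paper leaves this implicit), with the caveat you already note that in type $A_n^{(1)}$ the map has order $n+1$ rather than $2$.
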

 
\begin{remark}
Lemma~\ref{A:KR} was stated as~\cite[Assumption 1]{FSS:2007} (with the misprint $t_{-c_r \omega_r}$ instead
of $t_{-c_r \omega_{r^*}}$ in~\eqref{A:u}). This Assumption also included
the requirement that $B^{r,\ell c_r}$ is regular. Since we now know that $B^{r,\ell c_r}$ is the crystal
of a KR module by~\cite{Okado:2007,OS:2008,FOS:2009}, this is immediate, and so we do not
include it in Lemma~\ref{A:KR}. The statement in~\cite{FSS:2007} also included an additional assumption
for type $A_{2n}^{(2)}$, which we omit as we deal with type $A_{2n}^{(2)}$ separately.
\end{remark}
 
\begin{proof}[Proof of Lemma \ref{A:KR}]
By perfectness of $B^{r,\ell c_r}$ for non-exceptional types~\cite{FOS:2010}, there exists a
unique element $u\in B^{r,\ell c_r}$ such that $\varepsilon(u)=\ell \Lambda_0$. These elements
are listed for all non-exceptional types in Figure \ref{u-table}. It is easy to check explicitly that
these also satisfy $\varphi(u) = \ell \Lambda_{\tau(0)}$, where $\tau$ is as listed. Furthermore,
one can easily check that all these $\tau$ satisfy the remaining conditions of~\eqref{A:u}.
 
\begin{figure}
\begin{equation*}
\begin{array}{|l|l|c|c|l|}
        \hline
        \text{Type} & u \in B^{r,\ell c_r} & r & c_r & \tau \\ \hline \hline
        A_n^{(1)}   & u(\ell\Lambda_r) & 1\le r \le n & 1& \mathrm{pr}^r \\ \hline
        B_n^{(1)}  &  u(\emptyset) & 1\le r <n, \text{$r$ even} & 1& \mathrm{id}\\
                            &  u(\ell \Lambda_1)   & 1\le r < n, \text{$r$ odd} & 1 & \varsigma_{0,1}\\
                                &  u(\emptyset) & \text{$r=n$ even} & 2& \mathrm{id}\\
                            & u(\ell \Lambda_1) & \text{$r=n$ odd} & 2 & \varsigma_{0,1}\\ \hline
          C_n^{(1)} & u(\emptyset) & 1\le r< n & 2 & \mathrm{id}\\
                             & u(\ell \Lambda_n) & r=n & 1 & \varsigma_{\leftrightarrow}\\ \hline
          D_n^{(1)} & u(\emptyset) & 1\le r\le n-2, \text{$r$ even} & 1& \mathrm{id}\\
                             & u(\ell\Lambda_1) & 1\le r\le n-2, \text{$r$ odd} & 1& \varsigma_{0,1} \varsigma_{n-1,n}\\
                             & u(\ell \Lambda_{n-1}) & r=n-1& 1& \varsigma_{\leftrightarrow}\varsigma_{0,1}\varsigma_{n-1,n}^{n+1}\\
                             & u(\ell \Lambda_n) & r=n& 1& \varsigma_{\leftrightarrow} \varsigma_{n-1,n}^n\\ \hline
          A_{2n-1}^{(2)} & u(\emptyset) & 1\le r\le n, \text{$r$ even} & 1 & \mathrm{id}\\
                                     & u(\ell \Lambda_1) & 1\le r \le n, \text{$r$ odd} & 1 & \varsigma_{0,1}\\ \hline
          D_{n+1}^{(2)} & u(\emptyset) & 1\le r<n & 1 & \mathrm{id}\\
                                    & u(\ell \Lambda_n) & r=n & 1 & \varsigma_{\leftrightarrow}\\ \hline
          A_{2n}^{(2)} & u(\emptyset) & 1\le r\le n & 1& \mathrm{id}\\ \hline
\end{array}
\end{equation*}
 
\caption{The elements $u$ and related data. Here $u(\lambda)$ is the highest weight vector in the classical
component $B(\lambda)$, $\mathrm{pr}$ is the map $i\mapsto i+1 \pmod{n+1}$, and $\varsigma_{0,1}$
(resp. $\varsigma_{n-1,n}$) is the map that interchanges 0 and 1 (resp. $n-1$ and $n$) and
fixes all other $i$. The map $\varsigma_{\leftrightarrow}$ is $i \mapsto n-i$. In the expression for 
$\tau$ the maps act on the left, so that e.g. $\varsigma_\leftrightarrow \varsigma_{0,1} (1) 
= \varsigma_\leftrightarrow(0)= n$. 
\label{u-table}}
\end{figure}
 
The combinatorial model for $B^{r,s}$ of type $A_n^{(1)}$ uses the promotion operator
which corresponds to the Dynkin diagram automorphism mapping $i\mapsto i+1 \pmod{n+1}$.
Similarly, the KR crystals in case (AUT) are constructed using the automorphism $\varsigma$
(see Section~\ref{caseAUT:section}).

By Theorem~\ref{virtual-KR-th-B} the KR crystal $B^{n,2\ell}$ of type $B_n^{(1)}$
can be constructed as a virtual crystal inside $\hat B^{n,2\ell}$ of type $A_{2n-1}^{(2)}$ with
$e_0=\hat e_0^2$ and $e_1=\hat e_1^2$. The virtual crystal is constructed using the
analogue of the Dynkin automorphism $\hat \varsigma$ exchanging $0$ and $1$, so
that $e_0 = \hat e_0^2 = (\hat \varsigma \hat e_1 \hat \varsigma) (\hat \varsigma \hat e_1\hat
\varsigma) = \hat \varsigma \hat e_1^2 \hat \varsigma = \hat \varsigma e_1 \hat \varsigma$.
Furthermore, by~\cite[Lemma 3.5]{FOS:2009} the image of $B^{n,2\ell}$ in the ambient crystals
$\hat{B}^{n,2\ell}$ is closed under $\hat{\varsigma}$.
Hence on $B^{n,2\ell}$ of type $B_n^{(1)}$ there also exists an isomorphism interchanging
nodes 0 and 1, induced by $\hat{\varsigma}$.

It was shown in~\cite[Theorem 7.1]{FOS:2009} that $B^{r,s}$ of types $C_n^{(1)}$ and
$D_{n+1}^{(2)}$ admit a twisted isomorphism $\varsigma$ corresponding to the Dynkin
automorphism mapping $i\mapsto n-i$ for $i\in I$. This shows~\eqref{A:auto}.
 \end{proof}
 
For $B$ a crystal of affine type and $\tau \in \Aut(\Gamma)$, where $\Gamma$ is the affine
Dynkin diagram, let $B^\tau$ be the crystal with the same underlying set as $B$, but where
$e_i^\tau = \tau \circ e_i \circ \tau^{-1}$ and $f_i^\tau = \tau \circ f_i \circ \tau^{-1}$.
 
\begin{lemma} \label{D-extra-auts} 
Let $\Gamma$ be a non-exceptional affine Dynkin diagram and $\tau \in \Sigma$.
Then $(B^{r,s})^\tau \cong B^{r,s}$ for all $r\in I\setminus \{0\}$ and $s\ge 1$.
\end{lemma}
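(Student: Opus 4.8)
The plan is to show that every $\tau\in\Sigma$ is realized by a \emph{twisted automorphism} of $B^{r,s}$, i.e.\ a bijection $\psi_\tau\colon B^{r,s}\to B^{r,s}$ with $\psi_\tau\circ f_i=f_{\tau(i)}\circ\psi_\tau$ for all $i\in I$ (as in Lemma~\ref{A:KR}\eqref{A:auto}); the existence of such a $\psi_\tau$ is precisely the assertion $(B^{r,s})^\tau\cong B^{r,s}$. Since $\psi_{\tau_1}\psi_{\tau_2}$ realizes $\tau_1\tau_2$ and $\psi_\tau^{-1}$ realizes $\tau^{-1}$, the set of realizable $\tau$ is a subgroup of $\Aut(\Gamma)$, so it suffices to realize a generating set of $\Sigma$. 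The second, crucial, reduction is Lemma~\ref{uniqueness}: to produce $\psi_\tau$ it is enough to exhibit an isomorphism $(B^{r,s})^\tau\cong B^{r,s}$ of $\{1,\dots,n\}$-crystals, since any such isomorphism is automatically one of affine crystals and so is the desired twisted automorphism. This converts each affine statement into a purely finite-type one.

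For the types in which $\Sigma$ is cyclic the generator is already in hand. In type $A_n^{(1)}$ we have $\Sigma=\langle\mathrm{pr}\rangle$, and $\mathrm{pr}$ is realized by the promotion operator used to define the model. In types $B_n^{(1)}$ and $A_{2n-1}^{(2)}$ one has $\Sigma=\langle\varsigma_{0,1}\rangle$, and in types $C_n^{(1)}$ and $D_{n+1}^{(2)}$ one has $\Sigma=\langle\varsigma_\leftrightarrow\rangle$; in each case the generator is realized by the twisted involution $\varsigma$ produced in Lemma~\ref{A:KR}\eqref{A:auto} (for $B_n^{(1)}$ this includes the spin node $B^{n,s}$, handled there through the virtual realization). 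In type $A_{2n}^{(2)}$ the diagram has no nontrivial automorphism in $\Sigma$, so there is nothing to prove.

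The remaining work is type $D_n^{(1)}$, where $\Sigma\cong(\Z/2)^2$ for $n$ even and $\Sigma\cong\Z/4$ for $n$ odd; note that its nontrivial elements are not the single swaps of Lemma~\ref{A:KR}\eqref{A:auto} but the ``rotations'' such as the vector element $\tau=\varsigma_{0,1}\varsigma_{n-1,n}$ and the reversal-type elements, so Lemma~\ref{A:KR}\eqref{A:auto} does not apply directly. For the vector element I would argue classically: $(B^{r,s})^\tau$ restricted to $\{1,\dots,n\}$ is $B^{r,s}$ restricted to $I\setminus\{1\}$, relabeled by $\tau$. For non-spin $r\le n-2$ every classical constituent $B(\lambda)$ in \eqref{KR-decomp} has $m_{n-1}=m_n=0$, so the classical diagram automorphism $\varsigma_{n-1,n}$ fixes each $\lambda$ and induces a genuine crystal isomorphism on each $B(\lambda)$; combining this componentwise isomorphism with the $\varsigma_{0,1}$-symmetry of Lemma~\ref{A:KR}\eqref{A:auto} yields the required $\{1,\dots,n\}$-isomorphism, which Lemma~\ref{uniqueness} then upgrades to an affine one.

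The genuine obstacle is the reversal-type generator $\rho$, which carries the affine node to a spin node and is needed for every $r$, together with the spin nodes $r=n-1,n$, for which Lemma~\ref{A:KR}\eqref{A:auto} provides nothing. Realizing $\rho$ through Lemma~\ref{uniqueness} reduces to the finite-type identity $\ch\!\big(B^{r,s}|_{I\setminus\{\rho(0)\}}\big)=\ch\!\big(B^{r,s}|_{I\setminus\{0\}}\big)$ after relabeling by $\rho$, i.e.\ a left--right symmetry of the $D_n^{(1)}$ KR crystal that is invisible from the $\varsigma_{0,1}$-symmetry alone, and for the spin nodes the $m_{n-1}=m_n=0$ shortcut above is unavailable. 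I expect this to be the hard part, and I would settle it from the explicit constructions of~\cite{FOS:2009}: the classical decomposition \eqref{KR-decomp} together with the branching rules of Section~\ref{pm-diagrams} lets one compute both restrictions and check that the multiplicities match after relabeling by $\rho$. Once a generating set of $\Sigma$ is realized in every type, the subgroup property from the first step gives $(B^{r,s})^\tau\cong B^{r,s}$ for all $\tau\in\Sigma$.
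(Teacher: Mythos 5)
Your overall skeleton --- produce an isomorphism $(B^{r,s})^\tau\cong B^{r,s}$ of $\{1,\dots,n\}$-crystals and then invoke Lemma~\ref{uniqueness} to upgrade it to an affine isomorphism --- is exactly the paper's final step, and your handling of the easy types and of the vector element $\varsigma_{0,1}\varsigma_{n-1,n}$ in type $D_n^{(1)}$ is sound. But the proposal has a genuine gap: in type $D_n^{(1)}$ you never prove the statement for the reversal-type elements of $\Sigma$, and you say so yourself (``I expect this to be the hard part, and I would settle it from the explicit constructions of~\cite{FOS:2009}''). Since $\Sigma$ for $D_n^{(1)}$ is $\Z/4$ ($n$ odd) or $(\Z/2)^2$ ($n$ even) and is \emph{not} generated by the vector element alone, the lemma remains unproven for \emph{every} node $r$ in type $D_n^{(1)}$, not only the spin nodes. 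Worse, the deferred verification is not routine: realizing a reversal-type $\rho$ via Lemma~\ref{uniqueness} requires knowing the decomposition of $B^{r,s}$ as an $I\setminus\{j\}$-crystal where $j=\rho(0)$ is a \emph{spin} node of the affine diagram, and the $\pm$-diagram branching machinery of Section~\ref{pm-diagrams} only describes branching upon removing node $1$ (hence node $0$ via $\varsigma$). So ``compute both restrictions and check the multiplicities match'' is precisely the hard content you would need to supply, not a bookkeeping step.

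What you are missing is the idea that makes the paper's proof uniform and computation-free: Lemma~\ref{Sigma-alt}. For any $\tau\in\Sigma$ there is a finite Weyl group element $w_\tau\in\oW$ carrying $\iota(i)$ to $\iota(\tau(i))$, where $\iota(i)=\alpha_i$ for $i\neq 0$ and $\iota(0)=-\theta$. Since $B^{r,s}$ has level $0$, the classical weight of any element is determined by its pairings with the $\iota(i)^\vee$, and it follows that the $\{1,\dots,n\}$-character of $(B^{r,s})^\tau$ is the image under $w_\tau$ of the $\{1,\dots,n\}$-character of $B^{r,s}$; by $\oW$-invariance of characters of finite type crystals these are equal, and since finite type crystals are classified by their characters, $(B^{r,s})^\tau\cong B^{r,s}$ as $\{1,\dots,n\}$-crystals for \emph{all} $\tau\in\Sigma$ and all types at once. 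Lemma~\ref{uniqueness} then finishes, with no case analysis, no generators of $\Sigma$, no appeal to Lemma~\ref{A:KR}\eqref{A:auto}, and in particular no special treatment of the $D_n^{(1)}$ reversal elements or spin nodes --- exactly the cases your argument leaves open.
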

 
\begin{proof}
Fix $\tau \in \Sigma$. By Lemma~\ref{Sigma-alt}, there exists $w_\tau \in \oW$ such that
  \begin{equation}
  \xymatrix{
    I \ar[r]^\iota \ar[d]^\tau & \Delta \ar[d]^{w_\tau} \\
    I \ar[r]_{\iota} & \Delta
  }
\end{equation}
commutes, where $\iota$ is the map that takes $i$ to $\alpha_i$ for all $i \neq 0$, and $0$ to
$- \theta$ (where $\theta = \delta-\alpha_0$). Thus the character of $(B^{r,s})^\tau$ as a
$\{1,2, \ldots, n\}$-crystal is in the Weyl group orbit of the character of $B^{r,s}$ as a
$\{1,2,\ldots, n\}$-crystal, and hence by Weyl group invariance these are equal. By the classification
of classical crystals, it follows that $(B^{r,s})^\tau \cong B^{r,s}$ as a  $\{1,2, \ldots, n\}$-crystal.
Thus the lemma follows by Lemma~\ref{uniqueness}.  
\end{proof}
 
\begin{lemma} \label{T4.7-D}
Theorem~\ref{FSS-theorem} holds in type $D_n^{(1)}$.
\end{lemma}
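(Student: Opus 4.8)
The plan is to invoke the proof of Theorem~\ref{FSS-theorem} from~\cite[Theorem 4.4]{FSS:2007} and verify that all of its hypotheses hold in type $D_n^{(1)}$. That proof requires, for each factor $B^{r,\ell c_r}$ of the composite crystal: the distinguished element $u$ with $\varepsilon(u)=\ell\Lambda_0$ and $\varphi(u)=\ell\Lambda_{\tau(0)}$, supplied by Lemma~\ref{A:KR}\eqref{A:u}; a twisted crystal isomorphism realizing a diagram automorphism $\tau\in\Sigma$ that carries the node $0$ to a classical node, supplied by Lemma~\ref{A:KR}\eqref{A:auto}; together with perfectness and regularity, both now known. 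The twisted isomorphism is exactly what converts the affine $f_0$-action into a classical $f_{\tau(0)}$-action, reducing the Demazure identification~\eqref{DRK} to a classical computation.

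For the non-spin factors $1\le r\le n-2$ these inputs are all provided by Lemma~\ref{A:KR}, and the argument of~\cite{FSS:2007} applies without change. The only gap is at the spin nodes $r=n-1,n$, which are precisely the cases excluded from Lemma~\ref{A:KR}\eqref{A:auto}: there the swap $\varsigma_{0,1}$ is not realized as a crystal automorphism of $B^{r,\ell c_r}$. To fill this gap I would replace the use of Lemma~\ref{A:KR}\eqref{A:auto} by Lemma~\ref{D-extra-auts}, which gives $(B^{r,s})^\tau\cong B^{r,s}$ for \emph{every} $\tau\in\Sigma$. For $D_n^{(1)}$ the group $\Sigma$ permutes the four branch nodes $\{0,1,n-1,n\}$ of the affine diagram, so it contains automorphisms moving $0$ to a classical node---in particular the $\tau$ attached to each spin $r$ in Figure~\ref{u-table}, for which $\tau(0)=n-1$ and $\tau(0)=n$ respectively---and the associated isomorphism plays the role that $\varsigma_{0,1}$ played in the non-spin case. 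Since a composite KR crystal may mix spin and non-spin factors, I would use Lemma~\ref{D-extra-auts} uniformly in place of Lemma~\ref{A:KR}\eqref{A:auto}, avoiding any case split among the factors.

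The main obstacle is not the existence of the symmetry, which Lemma~\ref{D-extra-auts} hands us, but checking that the Demazure-filling step of~\cite{FSS:2007}---written for the single swap $\varsigma_{0,1}$---still closes up for the more intricate automorphisms $\varsigma_\leftrightarrow\varsigma_{0,1}\varsigma_{n-1,n}^{n+1}$ and $\varsigma_\leftrightarrow\varsigma_{n-1,n}^{n}$ occurring at the spin nodes. Concretely, one must verify that applying the lowering string $f_v$ (see~\eqref{E:fw}) to $u_B\otimes u_{\ell\Lambda_0}$ sweeps out exactly $B\otimes u_{\ell\Lambda_0}$, converting each $f_0$-string into a classical $f_{\tau(0)}$-string via the $\tau$-twist; by~\eqref{E:fwDem} this is equivalent to~\eqref{DRK}. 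Unlike $\varsigma_{0,1}$, which fixes every interior node, the flip $\varsigma_\leftrightarrow$ reverses the interior chain $2,\dots,n-2$, so the $\tau$-twist relabels the classical nodes as well; one must therefore check that the reduced word for $v$ and the classical branching of $B^{r,\ell c_r}$---governed by the nested $\pm$-diagram description of Section~\ref{pm-diagrams}---remain compatible under this relabeling. Since $\tau(0)$ is a classical node and the relabeling is by a fixed element of $\Sigma$, this is a careful but routine extension of the bookkeeping in~\cite{FSS:2007}.
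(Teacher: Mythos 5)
Your treatment of the non-spin factors ($1\le r\le n-2$ via Lemma~\ref{A:KR} and \cite[Theorem 4.4]{FSS:2007}) and your instinct to use Lemma~\ref{D-extra-auts} both match the paper --- but the paper invokes Lemma~\ref{D-extra-auts} only for the induction on the number of tensor factors, as in \cite[Theorem 4.7]{FSS:2007}. The genuine gap is in the spin-node base case $B^{r,\ell}$, $r=n-1,n$, which is the entire new content of this lemma: you propose to rerun the automorphism argument of \cite{FSS:2007} with the twisted isomorphisms from Lemma~\ref{D-extra-auts} and declare the ``Demazure-filling step'' to be routine bookkeeping, but that step is never carried out, and it is where all the content lies. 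Moreover, the mechanism you describe --- ``converting each $f_0$-string into a classical $f_{\tau(0)}$-string'' --- presupposes that the Weyl-group part $v$ of $t_{-c_r\omega_{r^*}}$ contains reflections $s_0$. For the spin nodes it does not: as in \cite[Equation (2.10)]{FSS:2007}, $t_{-\omega_{n^*}}=w_0^n\tau_n$ with $w_0^n\in\oW$ of minimal length such that $w_0^n(\omega_n)=w_0(\omega_n)$. Consequently $B_v(\ell\Lambda_{\tau_n(0)})=B_{w_0^n}(\ell\Lambda_n)=B_{w_0}(\ell\Lambda_n)$ is swept out from $u_{\ell\Lambda_n}$ by \emph{classical} lowering operators alone. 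Since $B^{n,\ell}$ is classically irreducible (case (IRR), $B^{n,\ell}\cong B(\ell\omega_n)$) and perfect, the isomorphism $j\colon B(\ell\Lambda_n)\to B^{n,\ell}\otimes B(\ell\Lambda_0)$ of \cite{KMN1:1992} sends $u_{\ell\Lambda_n}$ to $u(\ell\Lambda_n)\otimes u_{\ell\Lambda_0}$, and since $\varepsilon_i(u_{\ell\Lambda_0})=\varphi_i(u_{\ell\Lambda_0})=0$ for $i\neq 0$, every classical $f_i$ acts on the left tensor factor; hence $j\bigl(B_{w_0^n}(\ell\Lambda_n)\bigr)=B^{n,\ell}\otimes u_{\ell\Lambda_0}$ at once. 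That is the paper's proof: no twisted automorphism of the spin crystal is needed, and the $f_0$-analysis you were planning to generalize simply does not arise in the base case.

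A second, sharper problem with your proposal to use Lemma~\ref{D-extra-auts} ``uniformly in place of Lemma~\ref{A:KR}\eqref{A:auto}'': Lemma~\ref{D-extra-auts} only produces twisted isomorphisms for $\tau\in\Sigma$, and in type $D_n^{(1)}$ the pure swap $\varsigma_{0,1}$ demanded by \cite[Assumption 1]{FSS:2007} for the non-spin factors is \emph{not} an element of $\Sigma$ --- the relevant element of $\Sigma$ is $\varsigma_{0,1}\varsigma_{n-1,n}$ (see Figure~\ref{u-table}). That automorphism is supplied by the case (AUT) construction, i.e.\ Lemma~\ref{A:KR}\eqref{A:auto}, not by Lemma~\ref{D-extra-auts}, so discarding the latter citation in favor of a ``uniform'' use of $\Sigma$-automorphisms would re-open the non-spin case as well, forcing exactly the kind of unverified rewrite of \cite{FSS:2007} that the paper's direct argument for the spin nodes was designed to avoid.
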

 
\begin{proof}
For $B=B^{r,\ell}$ with $1\le r \leq n-2$, Lemma~\ref{A:KR} shows that \cite[Assumption 1]{FSS:2007}
holds, and so Theorem~\ref{FSS-theorem} holds by~\cite[Theorem 4.4]{FSS:2007}.
 
Now consider $B^{n, \ell}$, noting that $c_n=1$. We need the following notation:

$\bullet$ As in Theorem \ref{FSS-theorem}, $n^*$ is defined by $\omega_{n^*} = -w_0(\omega_n)$, 
so that $(w_0^{n^*})^{-1}=w_0^n$. 
 
$\bullet$ $w_0^n \in \oW$ is of minimal length such that $w_0^n(\omega_n)= w_0(\omega_n)$,  
 
$\bullet$
$w_0^{n^*}$ is of minimal length such that
$w_0^{n^*}(\omega_{n^*}) = w_0(\omega_{n^*}),$ 
 
 $\bullet$ $\tau_n$ and $\tau_{n^*}$ are the $\tau$ from Table \ref{u-table} for $r=n, n^*$ respectively.

\noindent As in \cite[Equation (2.10)]{FSS:2007} (but noting that  due to differing conventions their $\tau_{n^*}$ 
is our $\tau_{n}$), 
\begin{equation}
t_{-\omega_{n^*}}= (w_0^{n^*})^{-1} \tau_{n} = w_0^{n} \tau_{n}.
\end{equation}
Since $w_0^n(\ell \Lambda_n)= w_0(\ell\Lambda_n)$, we see that
\begin{equation} \label{D-fs}
        B_{w_0^n}(\ell\Lambda_n)= B_{w_0}(\ell\Lambda_n)= \{  f_{i_N}^{m_N} \cdots f_{i_1}^{m_1} u_{\ell \Lambda_n}
        \mid m_1, \ldots, m_N \geq 0 \},
\end{equation}
where $s_{i_1} \cdots s_{i_N}$ is a reduced word for $w_0$ and the last equality follows 
by~\cite[Proposition 3.2.3]{Kashiwara:1993} (or see Section~\ref{DC-sec} above).
 
As in Section~\ref{class-decomp}, $B^{n,\ell}$ is isomorphic to $B(\ell \Lambda_n)$ as a classical crystal. 
The element $u$ from Figure~\ref{u-table} is the highest weight element $u(\ell \Lambda_n)$ of weight 
$\ell \Lambda_n$. It satisfies $\varepsilon(u(\ell \Lambda_n))=\ell \Lambda_0$ and 
$\varphi(u(\ell \Lambda_n))=\ell \Lambda_n$.
Furthermore $B^{n,\ell}$ is perfect of level $\ell$ so, as in~\cite{KMN1:1992}, there is a unique isomorphism
\begin{equation}\label{j-Dn}
\begin{aligned}
        j: B(\ell \Lambda_n) & \rightarrow B^{n,\ell} \otimes B(\ell\Lambda_0) \\
        u_{\ell \Lambda_n} & \mapsto u(\ell \Lambda_n) \otimes u_{\ell\Lambda_0}.
\end{aligned}
\end{equation}
Since $\varepsilon_i(u_{\ell \Lambda_0}) = \varphi_i(u_{\ell \Lambda_0}) =0$ for all $i \neq 0$, 
\eqref{D-fs} and~\eqref{j-Dn} imply
\begin{equation}
\begin{aligned} \label{j-DnD}
       j \left( B_{w_0^n}(\ell \Lambda_n) \right)
       = B^{n,\ell} \otimes u_{\ell \Lambda_0}.
\end{aligned}
\end{equation}
Since $\tau_{n}(0)=n$, we have proven Theorem \ref{FSS-theorem} 
for $B=B^{n,s}$. A similar argument shows that Theorem~\ref{FSS-theorem} holds for $B=B^{n-1,s}$.
 
By Lemma~\ref{D-extra-auts}, each automorphism $\tau \in \Sigma$
induces a bijection of $B^{r,\ell}$ to itself that sends $i$ arrows to $\tau(i)$ arrows. Thus, as
in~\cite[Theorem 4.7]{FSS:2007}, a straightforward induction argument shows that
Theorem~\ref{FSS-theorem} holds for all composite level-$\ell$ KR crystals.
\end{proof}

\begin{lemma} \label{T4.7-A}
Theorem \ref{FSS-theorem} holds in type $A_{2n}^{(2)}$.
\end{lemma}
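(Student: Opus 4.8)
The plan is to follow the same two-step scheme as the proof of Lemma~\ref{T4.7-D}: first settle the single-factor case $B=B^{r,\ell}$, and then deduce the general composite case by an induction that appends tensor factors. The decisive simplification in type $A_{2n}^{(2)}$ is that, by Figure~\ref{u-table}, every factor has $c_r=1$ and $\tau=\mathrm{id}$, so $r^*=r$, the weight $\lambda=-(\omega_{r_1}+\cdots+\omega_{r_N})$ is anti-dominant, $v=t_\lambda$ lies in $W$, and the target highest weight crystal is $B(\ell\Lambda_0)$ at every intermediate stage of the induction. Lemma~\ref{A:KR}(i) supplies, for each factor, the element $u=u(\emptyset)$ with $\varepsilon(u)=\varphi(u)=\ell\Lambda_0$, so the ground-state path $u_B$ of Definition~\ref{def:uB} is homogeneous (one may take $u_B^j=u_{r_j,\ell}$ for all $j$). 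Since $B$ is perfect of level $\ell$, the construction of~\cite{KMN1:1992} yields the unique affine crystal isomorphism $j\colon B(\ell\Lambda_0)\to B\otimes B(\ell\Lambda_0)$ with $j(u_{\ell\Lambda_0})=u_B\otimes u_{\ell\Lambda_0}$, just as in~\eqref{j-Dn}. This already gives the first two assertions of Theorem~\ref{FSS-theorem}, so it remains only to verify the Demazure identity~\eqref{DRK}.

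For the composite step I would argue exactly as in the last paragraph of Lemma~\ref{T4.7-D} and in~\cite[Theorem~4.7]{FSS:2007}. Because $\tau=\mathrm{id}$ for every factor, all the diagram automorphisms occurring in that induction are trivial, so Lemma~\ref{D-extra-auts} is needed only for $\tau=\mathrm{id}$ and the reordering $R$-matrices are the only moving part. Using that translation lengths are additive for anti-dominant weights, a reduced word for $v$ may be assembled by concatenating reduced words for the $t_{-\omega_{r_j}}$, and the homogeneity of $u_B$ (together with $\varepsilon(u_B^{j+1})=\varphi(u_B^j)=\ell\Lambda_0$) lets one propagate the single-factor identity through the tensor product. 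Thus the general case reduces cleanly to $B=B^{r,\ell}$.

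For the base case I would prove $j^{-1}(B^{r,\ell}\otimes u_{\ell\Lambda_0})=B_{t_{-\omega_r}}(\ell\Lambda_0)$. A direct computation with the tensor product rule shows $B^{r,\ell}\otimes u_{\ell\Lambda_0}$ is closed under every $e_i$, so by Kashiwara's description~\eqref{E:fwDem} it suffices to check that a suitable reduced word for $t_{-\omega_r}$ fills out $B^{r,\ell}\otimes u_{\ell\Lambda_0}$ by $f$-strings, i.e. that each $f_0$ occurring acts on the left tensor factor rather than on $u_{\ell\Lambda_0}$; by the tensor rule this happens precisely when $\varepsilon_0(b)\ge\varphi_0(u_{\ell\Lambda_0})=\ell$. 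This is where the Dynkin diagram automorphism of Lemma~\ref{A:KR}(ii), which drives the argument of~\cite[Theorem~4.4]{FSS:2007}, is unavailable and must be replaced. I would substitute the explicit virtual realization of $B^{r,\ell}$ inside the type $C_n^{(1)}$ crystal $\hat B^{r,2\ell}$ (Theorem~\ref{virtual-KR-AD}) together with Lemma~\ref{goode-A2D2}: the latter shows $e_0$ moves between classical components by a single box, and guarantees that once $\varepsilon_0>s=\ell$ a box is always removed. This is exactly the bookkeeping matching the threshold $\varepsilon_0(b)\ge\ell$ above against the single-box classical decomposition~\eqref{KR-decomp} of $B^{r,\ell}$. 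An equivalent route would be to verify directly that the extra type $A_{2n}^{(2)}$ hypothesis of~\cite{FSS:2007} holds for the model of~\cite{FOS:2009} and then quote their single-factor theorem.

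The main obstacle is precisely this last step. Unlike all other non-exceptional types, $A_{2n}^{(2)}$ admits no diagram automorphism exchanging node $0$ with a classical node, so one cannot transport the $0$-arrow analysis to a finite-type computation; and its KR crystals decompose by removing single boxes rather than dominoes, so $\varepsilon_0$ ranges up to $s$ and the $f_0$-string structure is genuinely richer than in case (IRR). Overcoming this requires the finer control of $e_0$ from Lemma~\ref{goode-A2D2} and the virtual model, and in particular a careful treatment of the boundary value $\varepsilon_0(b)=\ell$, so as to confirm that the total number of $0$-arrows used matches the length of $t_{-\omega_r}$ exactly and that the $f$-strings neither escape nor fail to exhaust $B^{r,\ell}\otimes u_{\ell\Lambda_0}$.
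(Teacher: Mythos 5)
Your overall architecture (single-factor base case, then the composite induction, using the simplifications $c_r=1$ and $\tau=\mathrm{id}$ throughout) matches the shape of the paper's treatment of type $D_n^{(1)}$, and you correctly identify the obstruction: type $A_{2n}^{(2)}$ admits no diagram automorphism moving node $0$, so the mechanism behind \cite[Theorem 4.4]{FSS:2007} is unavailable. The gap is in your substitute for it. Lemma~\ref{goode-A2D2} provides only component-level information: it constrains in which classical component $e_0(b)$ lies (at most one box away, with a box necessarily removed when $\varepsilon_0(b)>s$). The Demazure identification~\eqref{DRK} requires element-level information: to run the filling procedure~\eqref{E:fwDem} along a reduced word for $t_{-\omega_r}$, you must know exactly where specific $0$-strings terminate, both to verify that every $f_0$ used acts on the left tensor factor and to verify that the strings exhaust $B^{r,\ell}\otimes u_{\ell\Lambda_0}$. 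Knowing the classical component of $e_0(b)$ does not tell you which element of that component it is, so neither closure nor exhaustion follows from this ``bookkeeping.'' There is also a quantitative mismatch that you flag but do not resolve: Demazure $0$-arrows occur already at the threshold $\varepsilon_0(b)\ge\ell=s$, while Lemma~\ref{goode-A2D2}(ii) says nothing at $\varepsilon_0(b)=s$.

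What is actually needed is the element-level statement \cite[Lemma 4.3]{FSS:2007}: for the classically highest weight element $u(s\omega_k)\in B(s\omega_k)\subset B^{r,s}$ with $k<r$ and $y:=S_1\cdots S_k(u(s\omega_k))$ (each $S_i$ being $f_i$ applied to its maximal power), one has $f_0^s(y)=u(s\omega_{k+1})$. This pins down the endpoint of the relevant $0$-string exactly, and it is what drives the induction of \cite{FSS:2007} in this type. The paper's proof consists precisely of verifying this identity for the model of Theorem~\ref{virtual-KR-AD} by an explicit computation with the virtualization map $S$ into $\hat B^{r,2s}$ of type $C_n^{(1)}$ (where $f_0=\hat f_0$ and $f_i=\hat f_i^2$ for $i\ge 1$), and then invoking \cite[Theorem 4.7]{FSS:2007}, which yields both the single-factor case and the composite case since $\tau=\mathrm{id}$. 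So the route you relegate to a closing aside --- verify directly that the extra type $A_{2n}^{(2)}$ hypothesis of \cite{FSS:2007} holds for the model of \cite{FOS:2009} and quote their theorem --- is not an equivalent alternative to your main argument; it is the whole proof, and the verification it calls for is exactly the concrete computation your main route never supplies.
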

 
\begin{proof} 
In~\cite{FSS:2007} a proof of Theorem~\ref{FSS-theorem} was given, but not with the concrete
combinatorial model given in Theorem~\ref{virtual-KR-AD}.
The only place, where the concrete combinatorial model for type $A_{2n}^{(2)}$ is used
in~\cite{FSS:2007} is in the proof of~\cite[Lemma 4.3]{FSS:2007}. Thus we must verify that this
lemma holds with the explicit realization of KR crystals for this type from \cite{FOS:2009} or
equivalently Theorem~\ref{virtual-KR-AD}.
Let $S: B^{r,s} \to \hat B^{r,2s}_{C_n^{(1)}}$ be the unique injective map from the combinatorial
KR crystal of type $A_{2n}^{(2)}$ to the KR crystal of type $C_n^{(1)}$ such that
\begin{equation*}
        S(e_i b) = \hat e_i^{m_i} S(b) \qquad \text{and} \qquad
        S(f_i b) = \hat f_i^{m_i} S(b)    \quad \text{for all $i\in I$ and $b\in B^{r,s}$,}
\end{equation*}
where $m=(m_0,\ldots,m_n)=(1,2,\ldots,2)$. Using the explicit realization of
$\hat B^{r,2s}_{C_n^{(1)}}$ it is not too hard to check that for the classically highest weight
element $u(s\omega_k) \in B(s\omega_k) \subset B^{r,s}$ for $k<r$ and
$y:=S_1 \cdots S_k(u(s\omega_k))$ (where $S_i$ is $f_i$ raised to the maximal power), we have
$        S(f_0^s y) = S(u(s\omega_{k+1})).$
Thus~\cite[Lemma 4.3]{FSS:2007} holds. Furthermore, in this case $\tau$ is always the identity.
Thus Theorem~\ref{FSS-theorem} holds by~\cite[Theorem 4.7]{FSS:2007}.
\end{proof}

\begin{proof}[Proof of Theorem \ref{FSS-theorem}]
If $X=D_n^{(1)}$ or $A_{2n}^{(2)}$, the Theorem holds by Lemmas \ref{T4.7-D} and \ref{T4.7-A}.
In all other cases, Lemma~\ref{A:KR} shows that~\cite[Assumption 1]{FSS:2007} holds. Furthermore,
in all these cases $\Sigma$ is generated by the diagram automorphism $\varsigma$ from
Lemma~\ref{A:KR}. Thus the theorem holds by \cite[Theorem 4.7]{FSS:2007}.
\end{proof}

\section{The affine grading via the energy function}
\label{section.energy.equiv}
 
In this section, we show that for $B = B^{r_N, \ell c_{r_N}} \otimes \cdots \otimes B^{r_1, \ell c_{r_1}}$ 
a composite level-$\ell$ KR crystal the map $j$ from Theorem~\ref{FSS-theorem}
intertwines the $D$ function from Section~\ref{D-section} with the affine degree $\deg$ given in 
Definition~\ref{deg-def} below up to a shift.
This allows us to show that $j$ intertwines $E^\intrinsic$ with $\deg$ exactly, 
and in particular $E^\intrinsic$ agrees with $D$ up to a shift. 

\begin{definition} \label{deg-def}
For any Demazure crystal $B_w(\Lambda)$, where $w \in W$ and $\Lambda$ is a dominant integral weight, let
\[
       \deg:  B_{w}(\Lambda) \rightarrow \bz_{\geq 0}
\]
be the affine degree map defined by $\deg(u_{\Lambda})= 0$ and each $f_i$ has degree $\delta_{i,0}.$
\end{definition}
 
We begin by two preliminary lemmas, first for a single KR crystal and then tensor products of KR crystals.
 
\begin{lemma} \label{prime-contribution}
Fix a KR crystal $B^{r,s}$ of type $\g$. Then $D(e_0(b)) \geq  D(b) -1$ for all $b\in B^{r,s}$.
Furthermore, if $\varepsilon_0(b)>\lceil s/c_r \rceil$, then we have  $D(e_0(b)) = D(b) -1$.
\end{lemma}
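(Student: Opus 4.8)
The plan is to analyze how $D$ changes under $e_0$ by reducing to the behavior of the single component $D$-function $D_{B^{r,s}}$ on classical components, since $D_{B^{r,s}}$ is constant on each $B(\lambda)$ and records the maximal number of copies of $\diamond$ removable from $\Lambda(\lambda)$. First I would recall that $D = D_{B^{r,s}}$ for a single tensor factor (there are no $H$ terms), so the statement is purely about how $e_0$ moves $b$ between classical components $B(\gamma) \to B(\gamma')$ and how the removable-$\diamond$ count changes. The key input is the case-by-case analysis already established in Lemmas~\ref{lemma:irreducible}, \ref{goode-D}, \ref{goode-C}, \ref{goode-Bn}, and \ref{goode-A2D2}, which describe exactly how $\Lambda(\gamma')$ differs from $\Lambda(\gamma)$: in case (IRR) $\diamond = \emptyset$ so $D \equiv 0$ and there is nothing to prove; in the remaining cases $\Lambda(\gamma')$ differs from $\Lambda(\gamma)$ by at most one $\diamond$ (a vertical domino, horizontal domino, or box depending on type), and moreover when $\varepsilon_0(b)$ exceeds the relevant threshold, $\Lambda(\gamma')$ is obtained from $\Lambda(\gamma)$ by \emph{removing} exactly one $\diamond$.

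Second, I would prove the inequality $D(e_0(b)) \geq D(b) - 1$. The content is that adding or removing a single $\diamond$ from $\Lambda(\gamma)$ changes the maximal number of removable $\diamond$'s by at most $1$ in absolute value. Removing a $\diamond$ can decrease the count by at most one (the removed $\diamond$ might have been one of the removable ones, but the rest of a maximal removal sequence for $\gamma$ still applies to $\gamma'$, suitably interpreted), and adding a $\diamond$ can increase it by at most one. The slightly delicate point is to confirm that removing a $\diamond$ from a partition that still leaves a valid generalized partition cannot decrease the removable count by more than one; I would argue this by noting that a maximal removable configuration is governed by the local profile of the partition boundary, and a single-$\diamond$ change perturbs this profile in a controlled way. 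This gives $|D(\gamma') - D(\gamma)| \leq 1$, hence in particular $D(e_0(b)) \geq D(b) - 1$.

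Third, for the sharp statement, assume $\varepsilon_0(b) > \lceil s/c_r \rceil$. The thresholds appearing in the \texttt{goode} lemmas are precisely matched to $\lceil s/c_r \rceil$: for case (AUT) types $c_r = 1$ and the threshold is $s$ (Lemma~\ref{goode-D}); for $C_n^{(1)}$ with $r<n$ the threshold is $\lceil s/2\rceil$ with $c_r = 2$ (Lemma~\ref{goode-C}); for $B^{n,s}$ of type $B_n^{(1)}$ the threshold is $\lceil s/2\rceil$ with $c_n = 2$ (Lemma~\ref{goode-Bn}); and for the $D_{n+1}^{(2)}$ and $A_{2n}^{(2)}$ cases $c_r = 1$ and the threshold is $s$ (Lemma~\ref{goode-A2D2}). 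In each case the relevant lemma guarantees that $e_0$ removes exactly one $\diamond$, so $\Lambda(\gamma')$ is $\Lambda(\gamma)$ with one $\diamond$ deleted. The final step is then to verify that deleting this particular $\diamond$ decreases the removable count by exactly $1$ rather than leaving it unchanged. Here I would use that the $\diamond$ removed by $e_0$ under the high-$\varepsilon_0$ hypothesis is removed from the \emph{correct} corner of the shape (the one a maximal removal sequence would use), so that $D$ genuinely drops by one; this should follow from matching the explicit inner-shape descriptions in the \texttt{goode} lemmas against the definition of $D_{B^{r,s}}$.

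The main obstacle I expect is this last bookkeeping step: confirming that the specific $\diamond$ removed by $e_0$ in the high-$\varepsilon_0$ regime is compatible with a maximal removal sequence, so that the generic inequality $|D(\gamma')-D(\gamma)|\le 1$ is actually saturated in the decreasing direction. This requires care because a priori one could imagine removing a $\diamond$ that does not belong to any maximal removal set, which would leave $D$ unchanged. I anticipate that the threshold condition $\varepsilon_0(b) > \lceil s/c_r\rceil$ is exactly what forces the removed $\diamond$ to come from a rectangle-filling position, guaranteeing the drop; verifying this uniformly across all the (VIR) and (AUT) cases, while routine given the explicit models, is where the real work lies.
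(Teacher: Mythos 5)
Your proposal follows the paper's proof: the paper's argument is exactly your case split, citing Lemma~\ref{lemma:irreducible} for case (IRR) and Lemmas~\ref{goode-D}, \ref{goode-C}, \ref{goode-Bn}, \ref{goode-A2D2} for the remaining cases, with the same matching of the thresholds to $\lceil s/c_r\rceil$; the paper then treats the translation from ``$e_0$ adds/removes one $\diamond$'' to ``$D$ changes by $\pm 1$'' as immediate.

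Two corrections to the part you flag as the main obstacle. First, the failure mode you describe is impossible: if $\Lambda(\gamma')$ is obtained from $\Lambda(\gamma)$ by removing one $\diamond$, then prepending that removal to any maximal removal sequence for $\Lambda(\gamma')$ gives $D_{B^{r,s}}(\gamma)\geq D_{B^{r,s}}(\gamma')+1$, so removing a $\diamond$ can \emph{never} leave $D$ unchanged. What actually needs ruling out is a drop of two or more, and that threatens the \emph{inequality} half of the lemma, not the equality half. This check is easy, because on the shapes occurring in the classical decomposition \eqref{KR-decomp} the count of Definition~\ref{primeD-def} is additive: it equals $|\Lambda(\lambda)|$ when $\diamond$ is a box; it equals $\sum_i \lfloor \lambda_i/2\rfloor$ when $\diamond$ is a horizontal domino, since all rows of such shapes have the same parity as $s$; and the transposed statement holds for columns when $\diamond$ is a vertical domino. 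Hence adding (resp.\ removing) one $\diamond$ changes $D$ by exactly $+1$ (resp.\ $-1$), and your anticipated ``real work'' is the easy step the paper suppresses. Second, in case (IRR) ``nothing to prove'' is not quite accurate: since $D\equiv 0$, the equality $D(e_0(b))=D(b)-1$ would be \emph{false} if its hypothesis could ever hold, so one must invoke Lemma~\ref{lemma:irreducible} (as the paper does) to see that $\varepsilon_0(b)\leq s/c_r$ always, making the second claim vacuous in that case.
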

 
\begin{proof} \mbox{}
\begin{itemize}
\item \textbf{Case (IRR):}
In this case $\diamond=\emptyset$, so that by Definition~\ref{primeD-def} the $D$ function is the constant
function $0$ which satisfies $D(e_0(b)) \geq  D(b) -1$ for all $b\in B^{r,s}$.
By Lemma~\ref{lemma:irreducible} we always have $\varepsilon_0(b)\le s/c_r$ since $c_r=1$.
\item \textbf{Case (AUT):}
In this case $\diamond$ is a vertical domino. The statements follow
immediately from Lemma~\ref{goode-D} since again $c_r=1$ in all cases.
\item
For $B^{r,s}$ of type $C_n^{(1)}$ with $r<n$, we have $c_r=2$ and $\diamond$ is a horizontal domino.
The result follows from Lemma~\ref{goode-C}.
\item
For $B^{n,s}$ of type $B_n^{(1)}$, we have $c_r=2$ and $\diamond$ is a vertical domino.
The result follows from Lemma \ref{goode-Bn}.
\item
For $B^{r,s}$ of types $A_{2n}^{(2)}$ with $r\le n$ and $D_{n+1}^{(2)}$ with $r \leq n-1$, 
we have $c_r=1$ and $\diamond$ is a single box. The result follows from Lemma~\ref{goode-A2D2}.
\end{itemize}
\end{proof}
 
\begin{lemma} \label{DD-on-B} 
Let $B= B^{r_N, s_N} \otimes \cdots \otimes  B^{r_1, s_1}$ be a tensor product of KR crystals and fix an integer $\ell$
such that $\ell \geq \lceil s_k/c_k \rceil$ for all $1 \leq k \leq N$. If $e_0(b) \neq 0$ then $D(e_0(b)) \geq D(b)-1$,
and if $\varepsilon_0(b) > \ell$ then this is an equality.
\end{lemma}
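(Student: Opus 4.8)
The plan is to induct on the number $N$ of tensor factors, reducing at each step to a two-factor analysis in which the local energy property \eqref{eq:local energy} and the single-crystal estimate of Lemma~\ref{prime-contribution} do all the work. The base case $N=1$ is immediate: there $D_B=D_{B^{r_1,s_1}}$, so Lemma~\ref{prime-contribution} gives $D(e_0(b))\ge D(b)-1$, and the hypothesis $\varepsilon_0(b)>\ell\ge\lceil s_1/c_{r_1}\rceil$ forces $\varepsilon_0(b)>\lceil s_1/c_{r_1}\rceil$, hence the asserted equality.

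For the inductive step I would write $B=B_N\otimes B'$ with $B'=B^{r_{N-1},s_{N-1}}\otimes\cdots\otimes B^{r_1,s_1}$ and regroup the terms of Definition~\ref{DB-def}. Every $H_{j,i}$ and $D_i$ with $j,i\le N-1$ involves only the first $N-1$ factors and so reassembles $D_{B'}$, leaving
\[
D_B(b_N\otimes b')=\Big(\sum_{i=1}^{N-1}H_{N,i}+D_N\Big)(b_N\otimes b')+D_{B'}(b').
\]
Let $\sigma\colon B_N\otimes B'\to B'\otimes B_N$ be the composite $R$-matrix $\sigma_1\cdots\sigma_{N-1}$ and write $\sigma(b_N\otimes b')=(b')^\sigma\otimes(b_N)^\sigma$. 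By definition $D_N(b_N\otimes b')=D_{B_N}\big((b_N)^\sigma\big)$, and by additivity of local energy functions $\sum_{i=1}^{N-1}H_{N,i}$ is a local energy function $H_{B_N,B'}$ obeying \eqref{eq:local energy} relative to $\sigma$. Thus $D_B$ has exactly the shape of a two-factor $D$-function, with $B_N$ and $B'$ as the two factors.

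Since $\sigma$ commutes with $e_0$, I would now split into the four cases recorded by whether $e_0$ acts on the left or right factor of $b_N\otimes b'$ and of its image $(b')^\sigma\otimes(b_N)^\sigma$. In the case LL only $H_{B_N,B'}$ changes, by $-1$; in LR only the $D_{B_N}$-term changes; in RL only the $D_{B'}$-term changes; and in RR the $H$-term changes by $+1$ while both $D$-terms change. Applying the inequality in Lemma~\ref{prime-contribution} to the $B_N$-term and the inductive hypothesis to $B'$, each case gives a net change $\ge -1$, proving the first assertion. For the equality I would use that $\varepsilon_0$ is $\sigma$-invariant together with $\varepsilon_0(a\otimes b)=\varepsilon_0(b)+\max(0,\varepsilon_0(a)-\varphi_0(b))$: if $e_0$ acts on the right factor of $b_N\otimes b'$ then $\varepsilon_0(b')=\varepsilon_0(b)>\ell$, and if it acts on the right factor of the image then $\varepsilon_0\big((b_N)^\sigma\big)=\varepsilon_0(b)>\ell\ge\lceil s_N/c_{r_N}\rceil$. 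In each of the four configurations these identities supply precisely the hypotheses needed---inductive equality for $B'$ in the R-original cases, and the equality in Lemma~\ref{prime-contribution} for $B_N$ in the R-image cases---to pin every term, forcing the total change to be exactly $-1$.

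The main obstacle I anticipate is the additivity statement, namely that $\sum_{i=1}^{N-1}H_{N,i}$ really is a local energy function for the pair $(B_N,B')$ satisfying \eqref{eq:local energy} with respect to the composite $R$-matrix $\sigma$; this is the mechanism that collapses the many $H_{N,i}$ contributions into a single $\pm1$, and it must be combined with the careful bookkeeping showing that $\varepsilon_0(b)>\ell$ localizes onto the correct tensor factor in each $R$-matrix configuration. Everything else is a routine propagation of Lemma~\ref{prime-contribution} through the tensor product.
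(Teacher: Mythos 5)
Your regrouping $D_B=\bigl(\sum_{i=1}^{N-1}H_{N,i}+D_N\bigr)+D_{B'}$ is correct, the base case is fine, and your LL/LR/RL/RR analysis together with the $\varepsilon_0$-bookkeeping (the formula $\varepsilon_0(a\otimes b)=\varepsilon_0(b)+\max(0,\varepsilon_0(a)-\varphi_0(b))$ and $\sigma$-invariance of $\varepsilon_0$) would indeed deliver both the inequality and the equality case. However, there is a genuine gap, and it sits exactly where you flagged it: the ``additivity'' claim that $\mathcal{H}:=\sum_{i=1}^{N-1}H_{N,i}$ obeys \eqref{eq:local energy} for the pair $(B_N,B')$ relative to the composite $R$-matrix $\sigma_1\cdots\sigma_{N-1}$ is asserted but never proved, and it cannot simply be quoted. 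The results the paper relies on (\cite{KMN1:1992}, \cite[Theorem 2.4]{OSS:2003}, plus connectedness from \cite[Proposition 3.8]{LOS:2010}) give existence and uniqueness up to an additive constant of \emph{some} function $H_{B_N,B'}$ with property \eqref{eq:local energy}; uniqueness is of no use until one shows that your particular sum $\mathcal{H}$ satisfies the defining recursion, which is precisely the assertion in question. Since this claim is what collapses the $N-1$ local terms into a single $\pm 1$, the entire technical content of the lemma is concentrated in it, and your proof is incomplete without it.

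Filling the gap requires the kind of argument the paper's proof carries out directly: track, one $\sigma_j$ at a time, which tensor position $e_0$ acts on, and use a consistency fact that your write-up never isolates --- if $e_0$ applied to the full tensor product acts on factor $j$, then its restriction to an adjacent pair containing $j$ coincides with $e_0$ applied to that two-factor element (when $j$ is the right member of the pair this needs $\varepsilon_0(b_j)>\varphi_0(\text{rest})$, which forces $\varphi_0(b_j\otimes\text{rest})=\varphi_0(b_j)$ and hence $\varepsilon_0(b_{j+1})\le\varphi_0(b_j)$). Without this, \eqref{eq:local energy} cannot legitimately be applied to the individual summands $H_{N,i}$, since the adjacent pair appearing inside $e_0(b)$ need not a priori equal $e_0$ of the pair appearing inside $b$. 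Once you carry out this bookkeeping you will have essentially reproduced the paper's own proof, which works with $D^{[x]}=\sum_{y<x}H_{x,y}+D_x$, fixes the factor $k$ on which $e_0$ acts, and shows $\Delta^{[k]}(b)\le 1$ and $\Delta^{[x]}(b)\le 0$ for $x>k$, with the stated equalities when $\varepsilon_0(b)>\ell$; your additivity claim and your inductive step are exactly Cases 2 and 3 of that argument in disguise. So the induction is a reasonable reorganization, but as submitted it defers all the real work into an unproven lemma.
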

 
\begin{proof}
Write $b= b_N \otimes \cdots \otimes b_1.$ For some $N \ge k \ge 1$,
\[
e_0(b) = b_N \otimes \cdots \otimes e_0(b_k) \otimes \cdots\otimes b_1.
\]
Let
\begin{equation*}
        b'_N \otimes \cdots \otimes b'_1 = \sigma_1 \cdots \sigma_{k-1}(b).
\end{equation*}
Since each $\sigma_j$ commutes with the action of $e_0$,
\[
e_0(b'_N \otimes \cdots \otimes b'_1)= b'_N \otimes \cdots \otimes e_0(b'_c) \otimes \cdots \otimes b'_1
\]
for some $k \geq c \geq 1$.
For each $ 1\le x\le N$, let
\begin{equation}
       D^{[x]} = \sum_{1\le y < x} H_{x,y} + D_x,
\end{equation}
where $H_{x,y}$ and $D_x$ are as in Definition \ref{DB-def}, so that
$D = \sum_{1\le x\le N} D^{[x]}$.
Let $\Delta(b)= D(b) - D(e_0(b))$ and $\Delta^{[x]}(b) = D^{[x]}(b) - D^{[x]}(e_0(b))$. We find each $\Delta^{[x]}(b)$, considering three cases.
 
{\bf Case 1, $k >x$:} Here $\Delta^{[x]}(b)=0$, as all terms in $D^{[x]}(b)$ and $D^{[x]}(e_0(b))$ agree.  
 
{\bf Case 2, $x=k$:} If $c \neq 1$, then, by Definitions~\ref{primeD-def} and~\ref{DB-def},
$D_x(b)=D_x(e_0(b))$ and by \eqref{eq:local energy}, $H_{x,c-1}( e_0(b)) = H_{x,c-1}( b)-1$.
For all other $y<x$  it is clear that $H_{x,y}(b)= H_{x,y}(e_0(b))$. Hence $\Delta^{[x]}(b)=1$.
 
Otherwise $c =1$ and
\[
       \Delta^{[x]}(b)= D^{[x]}(b) - D^{[x]}(e_0(b))
        = D_{B^{r_1,s_1}}(b'_1)- D_{B^{r_1, s_1}}(e_0(b_1')).
\]
In this case $\varepsilon(b'_1)= \varepsilon(b)$, so by Lemma \ref{prime-contribution}, $\Delta^{[x]}(b) \leq 1$, with equality if $\varepsilon_0(b) >\ell \geq s_{r_1}/c_{r_1}$.
 
{\bf Case 3, $x>k$}:
It is clear that $H_{x,y}(b) = H_{x,y}(e_0(b))$ for $x>y>k$.
Let
\begin{equation*}
d= d_N \otimes \cdots \otimes d_1 := \sigma_k \cdots \sigma_{x-1} (b ).
\end{equation*}
Since each $\sigma_j$ commutes with the action of $e_0$, one of the following must hold:
\begin{enumerate}
\item[(a)] $e_0(d_N \otimes \cdots \otimes d_1) = d_N \otimes \cdots \otimes e_0(d_{k+1}) \otimes d_k \otimes \cdots \otimes d_1$,
\item[(b)] $e_0(d_N \otimes \cdots \otimes d_1) = d_N \otimes \cdots \otimes d_{k+1} \otimes e_0(d_k) \otimes \cdots \otimes d_1.$
\end{enumerate}
If (a) holds, then for $y<k$ we have $H_{x,y}(b) = H_{x,y}(e_0(b))$, since $H_{x,y}$ is calculated
on exactly the same tensor product on each side. By \eqref{eq:local energy} we have
$H_{x,k}(b) = H_{x,k}(e_0(b))$ and by Definition \ref{primeD-def} $D_x(b)= D_x(e_0(b))$. Hence $\Delta^{[x]}(b)=0$.
 
If (b) holds, then, by~\eqref{eq:local energy}, $H_{x,k}(e_0(b)) = H_{x,k}(b)+1$, and for all $k < y < x$,
$H_{x,y}(e_0(b)) = H_{x,y}(b)$. Furthermore, by Definition \ref{DB-def} $D_x(b)= D_k(d)$ and $D_x(e_0 b)= D_k(e_0 d)$.
Comparing terms and noticing that $H_{x,r}(b)= H_{k,r}(d)$ and $H_{x,r}(e_0 b)= H_{k,r}(e_0 d)$ for all $r<x$ shows
\[
       \Delta^{[x]}(b) = \Delta^{[k]}(d) -1,
\]
so it follows by the argument in Case 2 that $\Delta^{[x]}(b) \leq 0$, with equality if
$\varepsilon_0(b) =\varepsilon_0(d) >\ell$.
 
The result now follows by adding all the $\Delta^{[x]}(b)$.
\end{proof}
 
\begin{theorem}  \label{grading=energy}
With the same assumptions and notation as in Theorem~\ref{FSS-theorem},
let $\tilde j: B_v(\ell \Lambda_{\tau(0)}) \rightarrow B$ be the restriction of the map $j$ to
$B_v(\ell \Lambda_{\tau(0)})$, where $B \otimes u_{\ell \Lambda_{0}}$ is identified with just $B$.
Then for all $b \in B_v(\ell \Lambda_{\tau(0)})$ we have $\deg (b) = D(\tilde j(b)) -D(u_B)$.
\end{theorem}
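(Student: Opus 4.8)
The plan is to prove the identity by induction, climbing from $b$ toward the highest weight element $u_{\ell \Lambda_{\tau(0)}}$ using the raising operators. Concretely, I would induct on the integer $\mathrm{ht}(b)$, the total number of crystal raising operators needed to reach $u_{\ell \Lambda_{\tau(0)}}$ from $b$ (equivalently, the sum of the coefficients $c_i$ in $\ell \Lambda_{\tau(0)} - \wt(b) = \sum_{i \in I} c_i \alpha_i$). Since the Demazure crystal $B_v(\ell \Lambda_{\tau(0)})$ is closed under every $e_i$ and its only $e$-highest element is $u_{\ell \Lambda_{\tau(0)}}$, any $b \neq u_{\ell \Lambda_{\tau(0)}}$ has some $e_i(b) \neq 0$ still lying in the Demazure crystal with $\mathrm{ht}(e_i(b)) = \mathrm{ht}(b) - 1$, so the induction is well-founded. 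The base case $b = u_{\ell \Lambda_{\tau(0)}}$ is immediate: by Theorem~\ref{FSS-theorem}, $\tilde j(u_{\ell \Lambda_{\tau(0)}}) = u_B$, so both $\deg(b)$ and $D(\tilde j(b)) - D(u_B)$ vanish. It then suffices to show that $\deg(b) - D(\tilde j(b))$ is unchanged when passing from $b$ to $e_i(b)$, for each $i$ with $e_i(b) \neq 0$.

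For $i \neq 0$, I would first record that $D$ is invariant under the classical crystal operators. Indeed $D_{B^{r,s}}$ is constant on classical components by Definition~\ref{primeD-def}, the local energy function $H$ is invariant under $e_i$ for $i \neq 0$ by \eqref{eq:local energy}, and each combinatorial $R$-matrix $\sigma$ commutes with all $e_i, f_i$; hence every summand $D_i$ and $H_{j,i}$ of Definition~\ref{DB-def}, and therefore $D$ itself, is invariant under $e_i$ for $i \neq 0$. On the Demazure side $\deg(e_i(b)) = \deg(b)$ since $e_i$ has degree $-\delta_{i,0}$ (Definition~\ref{deg-def}). On the KR side, writing $j(b) = \tilde j(b) \otimes u_{\ell \Lambda_0}$ and using $\varepsilon_i(u_{\ell \Lambda_0}) = 0$ for $i \neq 0$, the tensor product rule forces $e_i$ to act on the left factor, so $\tilde j(e_i(b)) = e_i(\tilde j(b))$ and hence $D(\tilde j(e_i(b))) = D(\tilde j(b))$. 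Thus both $\deg$ and $D \circ \tilde j$ are unchanged, as required.

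The case $i = 0$ is the crux. Here $\deg(e_0(b)) = \deg(b) - 1$, so I must show $D(\tilde j(e_0(b))) = D(\tilde j(b)) - 1$. The key point is that because $e_0(b)$ again lies in the Demazure crystal, by \eqref{DRK} its image $j(e_0(b)) = e_0(\tilde j(b) \otimes u_{\ell \Lambda_0})$ must again lie in $B \otimes u_{\ell \Lambda_0}$; since $\varphi_0(u_{\ell \Lambda_0}) = \ell$, the tensor product rule then forces $e_0$ to act on the left factor and forces $\varepsilon_0(\tilde j(b)) > \ell$ (otherwise $e_0$ would act on the right factor, producing an element outside $B \otimes u_{\ell \Lambda_0}$, or would vanish). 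Now the composite crystal $B$ is of level $\ell$, so $s_k = \ell c_{r_k}$ and $\lceil s_k / c_{r_k} \rceil = \ell$ for all $k$; thus the hypotheses of Lemma~\ref{DD-on-B} hold, and the strict inequality $\varepsilon_0(\tilde j(b)) > \ell$ puts us in its equality case, giving $D(e_0(\tilde j(b))) = D(\tilde j(b)) - 1$, i.e. $D(\tilde j(e_0(b))) = D(\tilde j(b)) - 1$. This matches the drop in $\deg$ exactly and closes the inductive step. I expect this $i = 0$ analysis to be the main obstacle: the generic inequality $D(e_0(b)) \geq D(b) - 1$ from Lemma~\ref{DD-on-B} is not enough, and it is precisely the Demazure condition \eqref{DRK}, combined with the tensor product rule, that supplies the strict inequality $\varepsilon_0(\tilde j(b)) > \ell$ needed to invoke the sharper equality case.
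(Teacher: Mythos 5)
Your proposal is correct and is essentially the paper's own argument: the paper likewise reduces by connectivity of $B_v(\ell\Lambda_{\tau(0)})$ to the single key step that $e_0(b)\neq 0$ in the Demazure crystal forces $\varepsilon_0(\tilde j(b))>\ell$, and then invokes the equality case of Lemma~\ref{DD-on-B}; your induction on height and the explicit check that classical operators preserve both $\deg$ and $D\circ\tilde j$ just spell out what the paper leaves implicit. (Only trivial imprecisions: in the tensor product rule the relevant quantity is $\varphi_i(u_{\ell\Lambda_0})$ rather than $\varepsilon_i(u_{\ell\Lambda_0})$, both being $0$ for $i\neq 0$, and when $\varepsilon_0(\tilde j(b))\le\ell$ the operator $e_0$ acting on the right factor simply annihilates $u_{\ell\Lambda_0}$, so the ``element outside $B\otimes u_{\ell\Lambda_0}$'' branch never occurs.)
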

 
\begin{proof}
Since $B_v(\ell\Lambda_{\tau(0)})$ is connected with the highest weight element $u_{\ell \Lambda_{\tau(0)}}$,
$\tilde j(u_B) = u_{\ell \Lambda_{\tau(0)}}$, and $\deg(u_{\ell \Lambda_{\tau(0)}})=0$, it suffices to show that
$D(\tilde j(b)) = D(\tilde j (e_0(b)))-1$ for each $b \in B_v(\ell\Lambda_{\tau(0)})$ such that $e_0(b ) \neq 0$.
So, choose such a $b \in B_v(\ell\Lambda_{\tau(0)})$.
Since $e_0$ acts non-trivially on $\tilde j(b) \otimes u_{\ell \Lambda_0}$, we have $\varepsilon_0(\tilde j(b)) > \ell$,
and the result follows by Lemma~\ref{DD-on-B}.
\end{proof}
 
\begin{theorem} \label{E=D}
For any composite level-$\ell$ KR crystal $B=B^{r_N, \ell c_{r_N}} \otimes \cdots \otimes
B^{r_1, \ell c_{r_1}}$ we have $E^\intrinsic =D -D(u_B)$.
\end{theorem}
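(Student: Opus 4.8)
The plan is to prove the two inequalities $E^\intrinsic(b)\ge D(b)-D(u_B)$ and $E^\intrinsic(b)\le D(b)-D(u_B)$ for every $b\in B$, and then read off the theorem using Theorem~\ref{grading=energy}. Throughout I use that $D$ is constant on classical components: each $D_i$ is, since $D_{B^{r_i,s_i}}$ is constant on classical components and the $\sigma_j$ are classical crystal isomorphisms, while each $H_{j,i}$ is unchanged by $e_i$ for $i\ne 0$ by~\eqref{eq:local energy}.

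For the lower bound, fix $b$ and choose a string $f_{i_M}\cdots f_{i_1}(u_B)=b$ realizing $E^\intrinsic(b)$, that is, one with exactly $E^\intrinsic(b)$ factors $f_0$, and track $D$ along the intermediate elements $c_0=u_B,\dots,c_M=b$. Each step with $i_k\ne 0$ leaves $D$ unchanged by the remark above. For a step $c_k=f_0(c_{k-1})$, writing $c_{k-1}=e_0(c_k)$ and applying Lemma~\ref{DD-on-B} (whose hypothesis holds since $s_k=\ell c_{r_k}$ forces $\lceil s_k/c_{r_k}\rceil=\ell$) gives $D(c_{k-1})\ge D(c_k)-1$, i.e. $D(c_k)\le D(c_{k-1})+1$. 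Summing over all steps yields $D(b)-D(u_B)\le E^\intrinsic(b)$.

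For the upper bound, set $\tilde b=\tilde j^{-1}(b)\in B_v(\ell\Lambda_{\tau(0)})$, with $\tilde j$ as in Theorem~\ref{grading=energy}, and note $\tilde j(u_{\ell\Lambda_{\tau(0)}})=u_B$. Writing $v=s_{i_N}\cdots s_{i_1}$ as a reduced word and using $B_v(\ell\Lambda_{\tau(0)})=f_v(u_{\ell\Lambda_{\tau(0)}})$ from~\eqref{E:fwDem}, I can express $\tilde b=f_{i_N}^{m_N}\cdots f_{i_1}^{m_1}(u_{\ell\Lambda_{\tau(0)}})$; since every suffix $s_{i_k}\cdots s_{i_1}$ is again reduced, all intermediate elements lie in the Demazure crystal. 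This gives a path from $u_{\ell\Lambda_{\tau(0)}}$ to $\tilde b$ inside $B_v(\ell\Lambda_{\tau(0)})$, which by Definition~\ref{deg-def} uses exactly $\deg(\tilde b)$ factors $f_0$. The key point is that $\tilde j$ transports this into a genuine string in $B$ with the same number of $f_0$'s: since $j$ is an affine crystal isomorphism and $\varphi_0(u_{\ell\Lambda_0})=\ell$, the tensor product rule shows that whenever an affine $f_0$ sends one element of $B\otimes u_{\ell\Lambda_0}$ to another (rather than leaving that set), it acts as the Kirillov--Reshetikhin operator $f_0$ on the left factor, and classical operators are intertwined automatically. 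Hence $f_{i_N}^{m_N}\cdots f_{i_1}^{m_1}(u_B)=b$ is a string with $\deg(\tilde b)$ factors $f_0$, so $E^\intrinsic(b)\le \deg(\tilde b)$.

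Finally, Theorem~\ref{grading=energy} gives $\deg(\tilde b)=D(\tilde j(\tilde b))-D(u_B)=D(b)-D(u_B)$, so the two inequalities pinch to $E^\intrinsic(b)=D(b)-D(u_B)$, which is the claim $E^\intrinsic=D-D(u_B)$. I expect the upper bound to be the main obstacle: one must verify that the reduced word keeps the whole path inside the Demazure crystal, and that there the affine $f_0$ genuinely coincides with the Kirillov--Reshetikhin $f_0$ — precisely the content that all $0$ arrows on the Demazure side correspond to $0$ arrows on the KR side from Theorem~\ref{FSS-theorem}.
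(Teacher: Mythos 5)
Your proof is correct and follows essentially the same route as the paper: the lower bound $E^\intrinsic(b)\ge D(b)-D(u_B)$ by tracking $D$ along a string realizing $E^\intrinsic(b)$ and applying Lemma~\ref{DD-on-B} at each $f_0$-step, and the upper bound from Theorem~\ref{grading=energy} together with the connectedness of the Demazure crystal, i.e.\ $B_v(\ell\Lambda_{\tau(0)})=f_v(u_{\ell\Lambda_{\tau(0)}})$ from~\eqref{E:fwDem}. Your write-up simply makes explicit the details (the reduced-word path staying inside the Demazure crystal, and the tensor-product-rule argument showing that $0$-arrows within $B\otimes u_{\ell\Lambda_0}$ act on the left factor) that the paper compresses into the phrase ``the Demazure crystal is connected.''
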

 
\begin{proof}
By the definition of $E^\intrinsic$ (see Section \ref{Eint:section}), for any $b$ there exists a string of $f_i$ involving
exactly $E^\intrinsic(b)$ factors $f_0$ taking $u_B$ to $b$. By Lemma~\ref{DD-on-B}, applying $f_0$ increases
the value of $D$ by at most $1$, and so it follows that $E^\intrinsic(b) \geq D(b) - D(u_B)$.
It follows from Theorem \ref{grading=energy} and the fact that the Demazure crystal  
$B_v(\ell \Lambda_{\tau(0)})$ 
is connected that $E^\intrinsic(b) \leq D(b)- D(u_B)$.
\end{proof}
 
\begin{corollary} \label{Edeg}
The map $j$ intertwines $E^\intrinsic$ and the basic grading $\deg$.
\end{corollary}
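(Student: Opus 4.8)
The plan is to deduce the corollary directly from the two preceding theorems, which between them already determine both gradings; no new computation should be required. First I would restrict $j$ to the Demazure crystal $B_v(\ell\Lambda_{\tau(0)})$, obtaining the isomorphism $\tilde j : B_v(\ell\Lambda_{\tau(0)}) \to B$ under the identification of $B\otimes u_{\ell\Lambda_0}$ with $B$ furnished by \eqref{DRK}. This $\tilde j$ is the map on which the basic grading $\deg$, defined on the Demazure side in Definition~\ref{deg-def}, and the intrinsic energy $E^\intrinsic$, defined on the composite KR crystal $B$ in Definition~\ref{def:intrinsic}, are to be compared.

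Next, for any $b \in B_v(\ell\Lambda_{\tau(0)})$, Theorem~\ref{grading=energy} gives $\deg(b) = D(\tilde j(b)) - D(u_B)$. Theorem~\ref{E=D} then identifies the right-hand side: since $E^\intrinsic = D - D(u_B)$ on $B$, we have $D(\tilde j(b)) - D(u_B) = E^\intrinsic(\tilde j(b))$. Chaining these two equalities yields $\deg(b) = E^\intrinsic(\tilde j(b))$ for every $b \in B_v(\ell\Lambda_{\tau(0)})$, which is exactly the assertion that $j$ intertwines the basic grading $\deg$ with $E^\intrinsic$.

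The only point requiring care — and the only place where the bookkeeping could slip — is matching up the domains: $\deg$ is a function on the Demazure crystal $B_v(\ell\Lambda_{\tau(0)})$, whereas $E^\intrinsic$ lives on $B$, so the intertwining is genuinely a statement about the restriction $\tilde j$ rather than about $j$ on all of $B(\ell\Lambda_{\tau(0)})$. Because \eqref{DRK} identifies $B_v(\ell\Lambda_{\tau(0)})$ with $B\otimes u_{\ell\Lambda_0}$, and hence with $B$, this identification is precisely what makes the two gradings comparable in the first place. I do not expect a serious obstacle here; the substance of the result is entirely absorbed into Theorems~\ref{grading=energy} and~\ref{E=D}, and the corollary is their immediate consequence.
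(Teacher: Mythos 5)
Your proposal is correct and follows exactly the paper's own argument: the paper deduces Corollary~\ref{Edeg} immediately by combining Theorem~\ref{grading=energy} ($\deg(b)=D(\tilde j(b))-D(u_B)$) with Theorem~\ref{E=D} ($E^\intrinsic = D - D(u_B)$), precisely as you do. Your additional remark about restricting to $B_v(\ell\Lambda_{\tau(0)})$ via \eqref{DRK} is just a careful spelling-out of the same identification the paper uses implicitly.
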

 
\begin{proof}
This is immediate from Theorems \ref{grading=energy} and  \ref{E=D}.
\end{proof}

\begin{remark} \label{remark.demazure arrow}
Let $B$ be a composite level-$\ell$ KR crystal. If $\varepsilon_0(b) > \ell$, call the corresponding $0$ arrow from 
$b'= e_0(b)$ to $b$ in $B$ a \textit{Demazure arrow}. We have shown that for such an arrow $D(b)= D(b')+1$. 
Furthermore, we have shown that $u_B$ is the unique source in the subgraph of $B$ consisting only of classical 
arrows (i.e. arrows colored $i$ for some $i \neq 0$) and Demazure arrows. This could be used to give an algorithm for 
calculating the energy function: given $b$, move backwards along arrows until one reach $u_B$, only using 
Demazure $0$-arrows. Then $D(b) - D(u_B)$ is the number of $0$ arrows in this path. 
\end{remark}
 
\section{Generalizations to other tensor products of KR crystals}
\label{section.nonperfect}
 
Notice that Lemma~\ref{DD-on-B} holds for more general tensor products of KR crystals,
not just tensor products of level-$\ell$ perfect KR crystals. We now show how Theorem~\ref{grading=energy}
and Corollary~\ref{Edeg} can be generalized as well.  
 
For this section, fix $\g$ of non-exceptional affine type, $\ell >0$, and a tensor product
$B= B^{r_N, s_N} \otimes \cdots \otimes  B^{r_1, s_1}$ of KR crystals, such that $\ell \geq \lceil s_k/c_k \rceil$
for all $1 \leq k \leq N$. We call such a crystal a {\it composite KR crystal of level bounded by $\ell$}.

The proof of the following proposition is similar to~\cite[Proof of Theorem 4.4.1]{KMN1:1992}.
 
\begin{prop} \label{is-hw}
For $B$ a composite KR crystal of level bounded by $\ell$,
$$B \otimes B(\ell \Lambda_0) \cong \bigoplus_{\Lambda'} B(\Lambda'),$$
where the sum is over a finite collection of (not necessarily distinct) $\Lambda' \in P^+_\ell$.
\end{prop}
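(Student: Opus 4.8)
The plan is to decompose $B \otimes B(\ell \Lambda_0)$ by locating its highest weight elements and identifying each connected component with a standard highest weight crystal, following the template of~\cite[Proof of Theorem 4.4.1]{KMN1:1992}. First I would determine the highest weight elements using the tensor product rule from Section~\ref{section.crystals}. Suppose $b \otimes v \in B \otimes B(\ell\Lambda_0)$ satisfies $e_i(b\otimes v)=0$ for all $i \in I$. For each $i$ the case $\varepsilon_i(b) > \varphi_i(v)$ is impossible, since it would give $e_i(b\otimes v)= e_i(b)\otimes v$ and force $e_i(b)=0$ while $\varepsilon_i(b) \ge 1$; so we must be in the other case, with $\varepsilon_i(b) \le \varphi_i(v)$ and $e_i(v)=0$, for every $i$. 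Since $B(\ell\Lambda_0)$ is a connected highest weight crystal, $e_i(v)=0$ for all $i$ forces $v=u_{\ell\Lambda_0}$, and then $\varphi_i(v)=\langle \alpha_i^\vee, \ell\Lambda_0\rangle = \ell\delta_{i,0}$ yields the description: the highest weight elements are exactly the $b\otimes u_{\ell\Lambda_0}$ with $\varepsilon_i(b)=0$ for $i\neq 0$ and $\varepsilon_0(b)\le \ell$. As $B$ is finite, there are only finitely many such $b$.

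Next I would verify that each such element has weight in $P^+_\ell$. Here $\wt(b\otimes u_{\ell\Lambda_0}) = \wt(b) + \ell\Lambda_0$, and since $B$ is finite it has level $0$ (exactly as in the proof of Lemma~\ref{uniqueness}), so $\lev(\wt(b)+\ell\Lambda_0)=\ell$. For dominance, $\langle \alpha_i^\vee, \wt(b)+\ell\Lambda_0\rangle = \varphi_i(b)-\varepsilon_i(b)+\ell\delta_{i,0}$; for $i\neq 0$ this equals $\varphi_i(b)\ge 0$, and for $i=0$ it equals $\varphi_0(b)-\varepsilon_0(b)+\ell \ge \ell-\varepsilon_0(b)\ge 0$ using $\varepsilon_0(b)\le \ell$. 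Thus each highest weight element has dominant weight $\Lambda'\in P^+_\ell$, and the collection of such $\Lambda'$ is finite.

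I would then argue that every element of $B\otimes B(\ell\Lambda_0)$ can be raised to a highest weight element. Because $B$ is finite and the weights of $B(\ell\Lambda_0)$ are bounded above by $\ell\Lambda_0$, the weights of $B\otimes B(\ell\Lambda_0)$ are bounded above in the dominance order; hence any sequence of raising operators terminates, so each connected component contains a highest weight element (and hence a dominant one, by the previous step).

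The hard part will be showing that the connected component of a highest weight element of weight $\Lambda'$ is isomorphic to $B(\Lambda')$, and not merely maps onto it. Following KMN1, the idea is that $B\otimes B(\ell\Lambda_0)$ is a normal (regular) crystal, being a tensor product of the normal crystals $B$ and $B(\ell\Lambda_0)$, so there is a canonical strict crystal morphism $B(\Lambda')\to B\otimes B(\ell\Lambda_0)$ sending $u_{\Lambda'}$ to the chosen highest weight element; one then checks this morphism is injective with image exactly the connected component of that element. The subtlety to handle carefully is that $B$ is only a $U_q'(\g)$-crystal while $B(\ell\Lambda_0)$ is a genuine $U_q(\g)$-crystal, so this last step must be carried out at the level of abstract normal crystals with weights bounded above rather than read off directly from an integrable module structure. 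Summing over the finitely many dominant highest weight elements then gives the stated decomposition.
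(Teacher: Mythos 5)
Your identification of the highest weight elements of $B \otimes B(\ell\Lambda_0)$ and the check that their weights lie in $P^+_\ell$ are correct, and your final step matches the (equally terse) conclusion of the paper's proof. The genuine gap is your termination argument. You claim that since $B$ is finite and the weights of $B(\ell\Lambda_0)$ are bounded above by $\ell\Lambda_0$, "the weights of $B\otimes B(\ell\Lambda_0)$ are bounded above in the dominance order; hence any sequence of raising operators terminates." This fails: $B$ is only a $U_q'(\g)$-crystal, carrying no $\delta$-grading, so the weight function on the tensor product takes values only in the classical lattice $P'$, where the images of the simple roots are linearly dependent ($\delta = \sum_i a_i\alpha_i \equiv 0$). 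In $P'$, applying $e_0$ moves the weight \emph{down} by $\theta/a_0$, so boundedness above does not rule out directed cycles under the raising operators, and such cycles really occur. Concretely, take $\g = A_1^{(1)}$, $B = B^{1,2}$ (perfect of level $2$) and $\ell =1$, and write $b_k$ for the element of $B^{1,2}$ with $k$ letters equal to $2$, so $\varepsilon_0(b_k)=2-k$ and $\varepsilon_1(b_k)=k$. Then by the tensor product rule $e_0(b_0 \otimes u_{\Lambda_0}) = b_1 \otimes u_{\Lambda_0}$ and $e_1(b_1 \otimes u_{\Lambda_0}) = b_0 \otimes u_{\Lambda_0}$, a cycle; this component has no highest weight element, so $B^{1,2}\otimes B(\Lambda_0)$ is \emph{not} a direct sum of crystals $B(\Lambda')$. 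Thus the proposition is false without the hypothesis $\ell \geq \lceil s_k/c_k \rceil$ — a hypothesis your proof never invokes, which is the telltale sign of the gap.

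This is precisely the point where the paper brings in its key technical tool. The paper first raises only the right-hand factor to $u_{\ell\Lambda_0}$ (legitimate, since $B(\ell\Lambda_0)$ alone has a genuine $P$-valued weight bounded above); any further raising then preserves the form $b'\otimes u_{\ell\Lambda_0}$, so by pigeonhole an infinite raising sequence would produce a cycle among such elements. A cycle must contain at least one $e_0$, and every $e_0$ occurring in it acts with $\varepsilon_0 > \ell = \varphi_0(u_{\ell\Lambda_0})$; Lemma~\ref{DD-on-B} — which is exactly where the level bound enters — then forces the energy $D$ to drop by $1$ at each such $e_0$ while the classical $e_i$ leave $D$ unchanged, yielding $D(b')<D(b')$, a contradiction. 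Some auxiliary $\Z$-valued grading of this kind, playing the role of the missing $\delta$-coordinate, is unavoidable here; without it your proof does not go through.
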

 
\begin{proof}
Fix $b \otimes c \in B \otimes B(\ell \Lambda_0)$. If $c \neq u_{\ell \Lambda_0}$, then for some $i \in I$ we have
$e_i(c) \neq 0$. This implies that, for some $k \geq 1$,
\begin{equation}
        e_i^k(b \otimes c) = e_i^{k-1}(b) \otimes e_i(c) \neq 0.
\end{equation}
In this way, one can apply raising operators to $b \otimes c$ to obtain an element $b' \otimes u_{\ell \Lambda_0}$.
 
Set $M=\dim(B)$, and assume that, for some choice of $e_{i_1}, \ldots, e_{i_M}$, and some
$b \in B$, $ e_{i_M} \cdots e_{i_1}(b \otimes u_{\ell \Lambda_0}) \neq 0$. Then some element 
$b' \otimes u_{\ell \Lambda_0}$ must appear twice. Thus for some $1 \leq r \leq s \leq M$, we have 
$e_{i_s} \cdots e_{i_r}(b' \otimes u_{\ell \Lambda_0})=b' \otimes u_{\ell \Lambda_0}$.
By weight considerations, at least one $e_0$ must appear in this sequence. Furthermore, whenever $e_{i_k}=e_0$, 
we must have $\varepsilon_0(e_{i_{k-1}} \cdots e_{i_r} (b')) > \ell$. Thus  Lemma~\ref{DD-on-B} implies that 
$D(b') < D(b')$, which is a contradiction. 
Hence the left hand side does decompose as a finite direct sum of $B(\Lambda')$ for $\Lambda' \in P^+$. It follows 
by weight considerations that all $\Lambda'$ are in $P^+_\ell$.
\end{proof}
 
\begin{definition} \label{def:gen}
For each $b \in B$, let $u_b^{\ell \Lambda_0}$ be the unique element of $B$
such that $u^{\ell \Lambda_0}_b \otimes u_{\ell \Lambda_0}$ is the highest weight in the component from
Proposition~\ref{is-hw} containing $b \otimes u_{\ell \Lambda_0}$.
\end{definition}
 
Define the function $\deg$ on a direct sum of highest weight crystals to be the basic grading on each component,
with all highest weight elements placed in degree $0$.
 
\begin{corollary} \label{cor:gen1}
Choose an isomorphism $m: B \otimes B( \ell \Lambda_0) \cong \bigoplus_{\Lambda'} B(\Lambda')$.
Then for all $b \in B$, we have $D(b)-D(u_b^{\ell \Lambda_0})= \deg(m(b \otimes u_{ \ell \Lambda_0}))$.
\end{corollary}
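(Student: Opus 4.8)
The plan is to follow the proof of Theorem~\ref{grading=energy} almost verbatim, replacing the single Demazure crystal $B_v(\ell\Lambda_{\tau(0)})$ by the subset $B \otimes u_{\ell \Lambda_0}$ sitting inside $B \otimes B(\ell \Lambda_0)$. The point is that $D$ behaves on $B$ exactly like the affine degree on the relevant highest weight component, and to see this I would compute $\deg(m(b \otimes u_{\ell \Lambda_0}))$ by climbing from $b \otimes u_{\ell \Lambda_0}$ up to the highest weight element of its component using raising operators that never leave $B \otimes u_{\ell \Lambda_0}$.

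First I would record the relevant closure property. Since $u_{\ell\Lambda_0}$ is the highest weight vector of $B(\ell\Lambda_0)$, we have $e_i(u_{\ell\Lambda_0})=0$ for all $i$, $\varphi_0(u_{\ell\Lambda_0})=\ell$, and $\varphi_i(u_{\ell\Lambda_0})=0$ for $i\neq 0$. Plugging these into the tensor product rule shows that for any $y\in B$ and any $i\in I$, $e_i(y\otimes u_{\ell\Lambda_0})$ is either $0$ or of the form $e_i(y)\otimes u_{\ell\Lambda_0}$; that is, $B\otimes u_{\ell\Lambda_0}$ is closed under every $e_i$. Because $B\otimes u_{\ell\Lambda_0}\cong B$ is finite, repeatedly applying raising operators to $b\otimes u_{\ell\Lambda_0}$ reaches a highest weight element still lying in $B\otimes u_{\ell\Lambda_0}$; this is the highest weight vector of the component of Proposition~\ref{is-hw} containing $b\otimes u_{\ell\Lambda_0}$, hence equals $u_b^{\ell\Lambda_0}\otimes u_{\ell\Lambda_0}$ by Definition~\ref{def:gen}. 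This produces a raising path from $b\otimes u_{\ell\Lambda_0}$ to $u_b^{\ell\Lambda_0}\otimes u_{\ell\Lambda_0}$ lying entirely inside $B\otimes u_{\ell\Lambda_0}$.

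Next I would track $D$ along this path. For an $e_i$-step with $i\neq 0$, the operator acts on the left tensor factor and moves $b$ within a classical component; since $D$ is constant on classical components (by Definition~\ref{primeD-def}, the fact that the combinatorial $R$-matrices commute with the classical crystal operators, and the ``otherwise'' case of \eqref{eq:local energy}), such a step leaves $D$ unchanged. For an $e_0$-step that stays inside $B\otimes u_{\ell\Lambda_0}$, the tensor product rule forces $e_0$ to act on the left factor, so that factor $y$ satisfies $\varepsilon_0(y)>\varphi_0(u_{\ell\Lambda_0})=\ell$; by the equality case of Lemma~\ref{DD-on-B} this step decreases $D$ by exactly $1$. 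Summing over the path, $D(b)-D(u_b^{\ell\Lambda_0})$ equals the number of $e_0$-steps. Finally, the affine degree is well defined and path independent on the $U_q(\g)$ highest weight crystal $B(\Lambda')$ (Definition~\ref{deg-def}), and $m$ is a crystal isomorphism carrying $0$-arrows to $0$-arrows, so this count of $e_0$-steps is exactly $\deg(m(b\otimes u_{\ell\Lambda_0}))$, giving the claim.

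The main obstacle is ensuring that Lemma~\ref{DD-on-B} can be applied in its equality form at every $0$-arrow of the path, i.e.\ that each such arrow satisfies $\varepsilon_0>\ell$. This is precisely what the closure of $B\otimes u_{\ell\Lambda_0}$ under $e_0$ provides: a nonzero $e_0$ on an element of $B\otimes u_{\ell\Lambda_0}$ must act on the left factor, and by the tensor product rule this occurs exactly when $\varepsilon_0$ of that factor exceeds $\varphi_0(u_{\ell\Lambda_0})=\ell$. The only other point to verify carefully is that $D$ really is constant on classical components, which I would check directly from Definitions~\ref{primeD-def} and~\ref{DB-def} together with \eqref{eq:local energy}.
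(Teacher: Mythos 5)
Your proposal is correct and follows essentially the same route as the paper: the paper's proof of Corollary~\ref{cor:gen1} is precisely ``apply Lemma~\ref{DD-on-B} as in the proof of Theorem~\ref{grading=energy},'' which is the argument you spell out --- raising paths from $b\otimes u_{\ell\Lambda_0}$ stay inside $B\otimes u_{\ell\Lambda_0}$ by the tensor product rule, classical steps leave $D$ unchanged, and every nonzero $e_0$-step forces $\varepsilon_0 > \ell$ on the left factor so the equality case of Lemma~\ref{DD-on-B} applies. You merely make explicit the closure property and the path-tracking that the paper leaves implicit, so there is nothing to correct.
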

 
\begin{proof}
This follows from Lemma \ref{DD-on-B}, just as in the proof of Theorem \ref{grading=energy}.  
\end{proof}
 
The following should be interpreted as a generalization of Corollary~\ref{Edeg}.
 
\begin{corollary} \label{cor:gen2}
The minimal number of $e_0$ in a string of $e_i$ taking $b$ to $u_b^{\ell \Lambda_0}$ is 
$D(b)-D(u_b^{\ell \Lambda_0})$.
\end{corollary}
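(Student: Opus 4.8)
The plan is to prove the two inequalities separately, using Corollary~\ref{cor:gen1} to convert the quantity $D(b)-D(u_b^{\ell\Lambda_0})$ into the basic grading $\deg\bigl(m(b\otimes u_{\ell\Lambda_0})\bigr)$ and then reading that grading off as a count of factors $e_0$. The essential input is the tensor product rule applied to $u_{\ell\Lambda_0}$: since $e_i(u_{\ell\Lambda_0})=0$ for every $i$, with $\varphi_i(u_{\ell\Lambda_0})=0$ for $i\ne 0$ and $\varphi_0(u_{\ell\Lambda_0})=\ell$, any nonzero application of a raising operator to an element of the form $b'\otimes u_{\ell\Lambda_0}$ must act on the left factor; moreover $e_0$ acts nontrivially on the left factor precisely when $\varepsilon_0(b')>\ell$ (otherwise it would land on $u_{\ell\Lambda_0}$ and annihilate). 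Hence any path of raising operators from $b\otimes u_{\ell\Lambda_0}$ to the highest weight $u_b^{\ell\Lambda_0}\otimes u_{\ell\Lambda_0}$ of its component (see Definition~\ref{def:gen}) keeps the right factor fixed at $u_{\ell\Lambda_0}$ and therefore projects to a string of $e_i$ in $B$ carrying $b$ to $u_b^{\ell\Lambda_0}$, with the same number of factors $e_0$.

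For the upper bound I would fix one such path. Because every $e_0$ appearing in it acts on the left factor, the number of these $e_0$'s equals the number of $f_0$'s recorded by the grading, namely $\deg\bigl(m(b\otimes u_{\ell\Lambda_0})\bigr)$, which Corollary~\ref{cor:gen1} identifies with $D(b)-D(u_b^{\ell\Lambda_0})$. This exhibits an explicit string of $e_i$ in $B$ from $b$ to $u_b^{\ell\Lambda_0}$ using exactly $D(b)-D(u_b^{\ell\Lambda_0})$ factors $e_0$, so the minimum is at most this number.

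For the lower bound I would take an arbitrary string of $e_i$ in $B$ carrying $b$ to $u_b^{\ell\Lambda_0}$ with $m$ factors $e_0$ and track the value of $D$ along it. The operators $e_i$ with $i\ne 0$ leave $D$ unchanged: the combinatorial $R$-matrices $\sigma_j$ commute with all $e_i$, each $H_i$ is invariant under $e_i$ for $i\ne 0$ by~\eqref{eq:local energy}, and each $D_{B^{r_i,s_i}}$ is constant on classical components (Definitions~\ref{primeD-def} and~\ref{DB-def}). Each of the $m$ factors $e_0$ changes $D$ by at least $-1$ by Lemma~\ref{DD-on-B}. Summing these contributions gives $D(u_b^{\ell\Lambda_0})-D(b)\ge -m$, i.e.\ $m\ge D(b)-D(u_b^{\ell\Lambda_0})$. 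Combining the two bounds yields the claimed equality, exactly paralleling the argument for Corollary~\ref{Edeg} and Theorem~\ref{E=D}. The only genuinely delicate point is the bookkeeping in the first paragraph: one must confirm that no $e_0$ in the ambient path is ``spent'' on the right factor, so that the $e_0$-count is preserved under projection to $B$; this is precisely what the inequality $\varepsilon_0>\ell$ guarantees, and it is the step I would write out most carefully.
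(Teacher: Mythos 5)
Your proposal is correct and follows essentially the same route as the paper: the paper's (terse) proof invokes the argument of Corollary~\ref{Edeg}/Theorem~\ref{E=D} together with the connectivity of $b\otimes u_{\ell\Lambda_0}$ with $u_b^{\ell\Lambda_0}\otimes u_{\ell\Lambda_0}$, which is exactly your combination of the lower bound from Lemma~\ref{DD-on-B} and the upper bound from Corollary~\ref{cor:gen1} via the projection of ambient raising paths onto $B$. Your careful justification that every nonzero $e_i$ acts on the left factor (so the $e_0$-count survives the projection) is precisely the point the paper leaves implicit.
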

 
\begin{proof}
This follows as in the proof of Corollary~\ref{Edeg}, using the fact that, for any $b \in B$,
$b \otimes u_{\ell \Lambda_0}$ is connected to $u_b^{\ell \Lambda_0} \otimes u_{\ell \Lambda_0}$ in the affine 
highest weight crystal.
\end{proof}

\begin{remark} \label{general-demazure}
The statements in this section do not give a relationship with Demazure crystals. However, 
Naoi~\cite[Proposition after Theorem B, Proposition 7.6]{Naoi:2010} and~\cite{Naoi:2011}
has recently proven that  $B \otimes u_\Lambda$ is isomorphic to a disjoint union of Demazure crystals inside 
the various irreducible components of $B \otimes B(\Lambda)$, 
where $\Lambda$ is an arbitrary dominant weight of level $\ell \geq 1$ 
and $B$ is a tensor product of perfect KR crystals each of which has level at most $\ell$. 
It would be interesting to determine whether or not this continues to hold for $B$ nonperfect.
\end{remark}
  
\section{Applications}
\label{section.applications}
 
In this section we discuss how the relation between the affine grading in the Demazure crystal
and the energy function can be used to derive a formula for the Demazure character using the
energy function, as well as showing how they are related to nonsymmetric Macdonald
polynomials and Whittaker functions.
 
\subsection{Demazure characters}

Since the character of a Demazure module $V_w(\lambda)$ can be expressed in terms of the Demazure
crystal by~\eqref{equation:Kashiwara-Demazure}, we have the following immediate corollary of 
Theorem~\ref{FSS-theorem}.
 
\begin{corollary} \label{Echar}
Let $B= B^{r_N, \ell c_{r_N}} \otimes \cdots \otimes B^{r_1, \ell c_{r_1}}$ be a $U'_q(\g)$-composite
level-$\ell$ KR crystal, $\lambda = -(c_{r_1} \omega_{r_1^*}+ \cdots + c_{r_N} \omega_{r_N^*})$, and
$t_\lambda = v\tau$ as in Theorem~\ref{FSS-theorem}. Then
\begin{equation}
\label{equation:demazure_character}
       \ch V_v(\ell\Lambda_{\tau(0)})
       = e^{\ell\Lambda_0} \sum_{b\in B} e^{\wt(b) - \delta E^{\intrinsic}(b)}
       = e^{\ell\Lambda_0} \sum_{b\in B} e^{\wt^\aff(b)},
\end{equation}
where $\wt^\aff(b) = \wt(b)-\delta E^\intrinsic(b)$ and $\wt(b)$ is the $U_q'(\g)$-weight of $b$.
\end{corollary}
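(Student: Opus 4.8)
The plan is to derive this directly from Kashiwara's crystal-theoretic formula for the Demazure character together with the two main results of Section~\ref{section.energy.equiv}. First I would invoke \eqref{equation:Kashiwara-Demazure} to write
\[
\ch V_v(\ell\Lambda_{\tau(0)}) = \sum_{b' \in B_v(\ell\Lambda_{\tau(0)})} e^{\wt(b')},
\]
where $\wt(b')$ now denotes the full \emph{affine} weight of $b'$ in $B(\ell\Lambda_{\tau(0)})$. The goal is then to reindex this sum over $B$ and to split each affine weight into a classical ($U_q'(\g)$) part and a $\delta$-part.

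Next I would use the restriction $\tilde\jmath : B_v(\ell\Lambda_{\tau(0)}) \to B$ of the isomorphism $j$, introduced in Theorem~\ref{grading=energy}: by \eqref{DRK} of Theorem~\ref{FSS-theorem} this is a bijection sending each $b'$ to the element $b\in B$ with $j(b') = b\otimes u_{\ell\Lambda_0}$. Since $j$ is an isomorphism of affine crystals it preserves affine weights, so $\wt(b') = \wt(b\otimes u_{\ell\Lambda_0})$, and the sum above becomes a sum over $b\in B$.

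The heart of the argument is the weight bookkeeping. Projecting the affine weight onto $P'=\text{span}\{\Lambda_i\}$ kills $\delta$ and recovers the $U_q'(\g)$-weight of Section~\ref{uprime-crystals}; on the tensor product $b\otimes u_{\ell\Lambda_0}$ this projection equals $\wt(b) + \ell\Lambda_0$, where $\wt(b)$ is the $U_q'(\g)$-weight of $b$. The remaining $\delta$-component of $\wt(b')$ is recorded by the number of $f_0$ applied in passing from $u_{\ell\Lambda_{\tau(0)}}$ to $b'$, that is, by $\deg(b')$ of Definition~\ref{deg-def}, so that $\wt(b') = \wt(b) + \ell\Lambda_0 - \deg(b')\,\delta$. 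By Corollary~\ref{Edeg} the map $j$ intertwines $\deg$ with $E^{\intrinsic}$, whence $\deg(b') = E^{\intrinsic}(b)$ and
\[
\wt(b') = \ell\Lambda_0 + \wt(b) - \delta\,E^{\intrinsic}(b) = \ell\Lambda_0 + \wt^{\aff}(b).
\]
Summing over $b'$, reindexing by $b\in B$ via $\tilde\jmath$, and factoring out $e^{\ell\Lambda_0}$ yields both equalities in \eqref{equation:demazure_character}.

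The only point requiring care — and the step I expect to be the main, if modest, obstacle — is the identification of the $\delta$-coefficient of the affine weight with $\deg(b')$, i.e.\ verifying that the normalization of the null root $\delta$ is consistent with counting $f_0$-arrows, each lowering the $\delta$-grading by exactly one. This is precisely the content already packaged into Definition~\ref{deg-def} and Corollary~\ref{Edeg}, so once those are available the corollary is immediate. As a consistency check I would specialize to $b' = u_{\ell\Lambda_{\tau(0)}}$, where $b = u_B$, $\deg(b')=0$, and the affine weight is $\ell\Lambda_{\tau(0)}$; this confirms the decomposition and pins down the relation $\wt(u_B) = \ell\Lambda_{\tau(0)} - \ell\Lambda_0$ in $P'$.
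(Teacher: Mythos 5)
Your proposal is correct and takes essentially the same route as the paper: the paper's own proof just cites Theorem~\ref{FSS-theorem} (the identification $j(B_v(\ell\Lambda_{\tau(0)})) = B \otimes u_{\ell\Lambda_0}$) together with \eqref{equation:Kashiwara-Demazure}, leaving implicit exactly the bookkeeping you spell out — that $j$ preserves the $P'$-weight, that the $\delta$-component is recorded by $\deg$, and that $\deg(b') = E^{\intrinsic}(b)$ by Corollary~\ref{Edeg}.
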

 
\begin{proof}
Recall that by Theorem~\ref{FSS-theorem} we have the isomorphism
\begin{equation*}
        j \left( B_v(\ell \Lambda_{\tau(0)}) \right) = B \otimes u_{\ell \Lambda_0}.
\end{equation*}
Combining this with~\eqref{equation:Kashiwara-Demazure}, we obtain the result.
\end{proof}
 
As in e.g.~\cite{HKOTT:2002,HKOTY:1999},
the one-dimensional configuration sums are defined as
\begin{equation} \label{equation.X}
        X(\mu;B) = \sum_{\substack{b\in B, \wt(b)=\mu\\ \text{$e_i(b)=\emptyset$ for $i\in I\setminus \{0\}$}}}
        q^{-D(b)} \; ,
\end{equation}
where $B = B^{r_N,s_N} \otimes \cdots \otimes B^{r_1,s_1}$. 
By Theorem~\ref{E=D} we can rewrite Corollary~\ref{Echar} as follows (compare also 
with~\cite[Theorem 1.2]{Shim:2002}).

\begin{corollary}
With the same assumptions and notation as in Corollary~\ref{Echar} and setting $q=e^\delta$ we have
\begin{equation}
       \ch V_v(\ell\Lambda_{\tau(0)}) = e^{\ell\Lambda_0} \; q^{D(u_B)} \; \sum_\mu
	X(\mu;B) \; \ch(V(\mu))\; ,
\end{equation}
where $V(\mu)$ is the module of highest weight $\mu$ for the underlying finite type algebra.
\end{corollary}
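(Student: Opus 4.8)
The plan is to start from the character formula already established in Corollary~\ref{Echar} and to reorganize the sum over $B$ according to its classical components. First I would substitute the identity $E^{\intrinsic}=D-D(u_B)$ from Theorem~\ref{E=D} into the second expression of~\eqref{equation:demazure_character}, giving
\[
\ch V_v(\ell\Lambda_{\tau(0)}) = e^{\ell\Lambda_0}\sum_{b\in B} e^{\wt(b)-\delta(D(b)-D(u_B))}.
\]
Pulling the constant $D(u_B)$ out of the sum and setting $q=e^{\delta}$ turns this into
\[
\ch V_v(\ell\Lambda_{\tau(0)}) = e^{\ell\Lambda_0}\, q^{D(u_B)} \sum_{b\in B} e^{\wt(b)}\, q^{-D(b)},
\]
so everything reduces to understanding the sum $\sum_{b\in B} e^{\wt(b)}\,q^{-D(b)}$.

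The key structural fact I would establish is that $D$ is \emph{constant on each classical component} of $B$, i.e.\ on each connected component under the operators $e_i,f_i$ with $i\neq 0$. This follows directly from Definition~\ref{DB-def}: each local energy $H_{j,i}$ is built from an $H$-function, which by~\eqref{eq:local energy} is unchanged by $e_i$ for $i\neq 0$, composed with combinatorial $R$-matrices $\sigma$, which are $U_q'(\g)$-crystal isomorphisms and hence commute with all $e_i,f_i$; and each $D_i$ is $D_{B^{r_i,s_i}}$ (constant on classical components by Definition~\ref{primeD-def}) composed with such $\sigma$'s. Granting this, I would group the sum by classical components: each component has a unique classically highest weight element $b_0$, characterized by $e_i(b_0)=0$ for all $i\neq 0$, and I may factor the now-constant $q^{-D(b_0)}$ out of the component sum to obtain
\[
\sum_{b\in B} e^{\wt(b)}\, q^{-D(b)} = \sum_{b_0}\, q^{-D(b_0)}\sum_{b\in C(b_0)} e^{\wt(b)},
\]
where $b_0$ runs over the classically highest weight elements and $C(b_0)$ denotes the component it generates.

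Finally I would collect components by their highest weight. Writing $\mu=\wt(b_0)$, the component $C(b_0)$ is isomorphic as a $\{1,\dots,n\}$-crystal to $B(\mu)$, so that $\sum_{b\in C(b_0)} e^{\wt(b)}$ should be identified with $\ch V(\mu)$, while summing $q^{-D(b_0)}$ over the classically highest weight elements of a fixed weight $\mu$ reproduces exactly the one-dimensional configuration sum $X(\mu;B)$ of~\eqref{equation.X}. Combining the last three displays then yields the claimed formula. The step I expect to be the main obstacle is precisely this last identification of $\sum_{b\in C(b_0)} e^{\wt(b)}$ with the finite-type character $\ch V(\mu)$: one must check that the $U_q'(\g)$-weight $\wt(b)$, restricted to a single classical component, matches the finite-type weight grading under the projection $P'\to\overline{P}$ sending $\Lambda_i\mapsto\omega_i$ for $i\neq 0$ and $\Lambda_0\mapsto 0$. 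This uses the Remark in Section~\ref{uprime-crystals} that $f_i$ lowers $\wt$ by the $P'$-image of $\alpha_i$, together with the fact that the finite Cartan matrix is the $I\setminus\{0\}$ submatrix of the affine one, so that $\alpha_i\mapsto\alpha_i^{\mathrm{fin}}$ under the projection. Once this weight bookkeeping is in place the corollary is immediate.
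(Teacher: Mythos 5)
Your proposal is correct and takes essentially the same route as the paper, which disposes of this corollary in one line (``By Theorem~\ref{E=D} we can rewrite Corollary~\ref{Echar} as follows''): substitute $E^{\intrinsic}=D-D(u_B)$, pull out $q^{D(u_B)}$, and regroup the sum over $B$ by classical components. The details you supply --- the constancy of $D$ on classical components (via \eqref{eq:local energy}, Definition~\ref{primeD-def}, and the fact that the $\sigma$'s are crystal isomorphisms), the identification of each component sum with $\ch V(\mu)$, and the matching of the classically highest weight contributions with $X(\mu;B)$ from \eqref{equation.X} --- are exactly the steps the paper leaves implicit.
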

 
\begin{example} 
Let us illustrate various quantities used in this section.  Consider $B=(B^{1,1})^{\otimes 3}$ of type $A_2^{(1)}$. 
This has $ u_B = 3\otimes 2\otimes 1$. Take $  b = 2\otimes 3\otimes 1$.
Then one can calculate $D(b)=-2$ and  $D(u_B)=-3$, so that $E^\intrinsic(b)=D(b)-D(u_B)=1$. Also
\begin{equation*}
        \wt(b) = (\Lambda_2-\Lambda_1) + (\Lambda_0-\Lambda_2) + (\Lambda_1-\Lambda_0)=0,
\end{equation*}
so that $\wt^\aff(b)=-\delta$.
\end{example}
 
\subsection{Nonsymmetric Macdonald polynomials}
\label{section.nonsymmetric_mac}

Fix $\g$ of affine type. Let $\widetilde P \subset P$ be the sublattice of level $0$ weights. Recall that 
$\widetilde P$ is naturally contained in $\overline{P} + \bz \delta$, where $\delta$
is the null root and we identify $\overline P \otimes_\bz {\Bbb R}$ with a subspace of $P \otimes_\bz {\Bbb R}$ 
by identifying the finite type simple roots with their corresponding affine simple roots (and this containment is 
equality except in type $A_{2n}^{(2)}$). Let $t$ be the collection of indeterminates $t_\alpha$ for each root 
$\alpha$ such that $t_\alpha = t_{\alpha'}$ if $\alpha$ and $\alpha'$ have the same length.
Consider the following elements of the group algebra $\Q(q,t) \overline{P}$:
\begin{equation*}
        \Delta := \prod_{\alpha\in R_+^\aff} \frac{1-e^\alpha}{1-t_\alpha e^\alpha} \big{|}_{e^{\delta}=q}, \quad \text{ and } \quad \Delta_1:=\Delta/([e^0]\Delta), 
\end{equation*}
where $[e^0]$ means the coefficient of $e^0$ and
$R_+^\aff$ is the set of positive affine real roots.
Cherednik's inner product~\cite{Cherednik:1995} on $\Q(q,t) \overline P$ is $\langle f,g\rangle_{q,t} = [e^0](f\overline{g}\Delta_1)$,
where $\overline{\cdot}$ is the involution $\overline{q}=q^{-1}$, $\overline{t}=t^{-1}$,
$\overline{e^{\lambda}}=e^{-\lambda}$.
 
The nonsymmetric Macdonald polynomials $E_\lambda(q,t)\in \Q(q,t)\overline P$ for $\lambda\in \overline P$
were introduced by Opdam~\cite{Opdam:1995} in the differential setting and
Cherednik~\cite{Cherednik:1995} in general (although here we follow conventions of Haglund, Haiman, Loehr
~\cite{Haiman:2006,HHL:2008}).
They are uniquely 
characterized by two conditions:
(Triangularity): $E_\lambda \in x^\lambda + \Q(q,t)\{x^\mu \mid \mu<\lambda\}$ and 
(Orthogonality): $\langle E_\lambda, E_\mu \rangle_{q,t} = 0$ for $\lambda\neq \mu$.
Here $<$ is the Bruhat ordering on $\overline P$ identified 
with the set of minimal coset
representatives in $\widetilde W/\oW$, where $\widetilde W$ is the extended affine Weyl
group and $\oW$ is the classical Weyl group.
  
Extending Sanderson's work~\cite{San:2000} for type $A$, Ion~\cite{Ion:2003} showed that for all simply laced
untwisted affine root systems, the specialization of the nonsymmetric Macdonald polynomial $E_\lambda(q,t)$ 
at $t=0$ coincides with a specialization of a Demazure character of a level one affine integrable module
(see~\cite[Theorem 6]{San:2000}, \cite{Ion:2003}):
Write $t_\lambda \in \widetilde W$ as $t_\lambda= w \tau$, where $w \in W, \tau \in \Sigma$. Then
\begin{equation}\label{macdem}
       E_{\lambda}(q,0)=q^c\, \mathrm{ch}(V_{w}(\Lambda_{\tau(0)}))|_{e^\delta=q, \; e^{\Lambda_0}=1},
\end{equation}
where
$c$ is a specific exponent described in~\cite{Ion:2003, San:2000} (and described in 
Corollary~\ref{Mac=KR} below in types $A_n^{(1)}$ and $D_n^{(1)}$). For $\lambda$ a single row, this 
relation also follows from combining~\cite{KMOTU:1998,Hikami:1997}. 

\begin{remark}
We have used the conventions of Haglund, Haiman, Loehr~\cite{Haiman:2006,HHL:2008}, which differ from
those in~\cite{Ion:2003} by the change of variables $q \rightarrow q^{-1}$.  
\end{remark}

If $\lambda$ is anti-dominant and $\g= A_n^{(1)}$ or $D_n^{(1)}$, we can apply Theorem~\ref{FSS-theorem} and 
Corollary~\ref{Edeg} to give a connection with KR crystals, along with their
energy. 

\begin{corollary} \label{Mac=KR} Fix $\g = A_n^{(1)}$ or $D_n^{(1)}$. 
Fix an anti-dominant weight $\lambda$, and write $\lambda = -(\omega_{r_1^*} + \cdots + \omega_{r_N^*})$.
Let $B = B^{r_N,1} \otimes \cdots \otimes B^{r_1,1}$. Then 
\begin{equation*}
 P_\lambda(q,0)= E_\lambda(q,0) =\sum_{b \in B} q^{-D(b)}e^{\wt(b)}. 
\end{equation*}
\end{corollary}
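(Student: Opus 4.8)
The plan is to combine Ion's identity \eqref{macdem} with the crystal-theoretic results of Section~\ref{section.energy.equiv}. First I would note that for $\g=A_n^{(1)}$ or $D_n^{(1)}$ all the constants $c_r$ equal $1$, so that $B=B^{r_N,1}\otimes\cdots\otimes B^{r_1,1}$ is exactly the level-$\ell$ composite KR crystal of Theorem~\ref{FSS-theorem} with $\ell=1$, and $\lambda=-(\omega_{r_1^*}+\cdots+\omega_{r_N^*})$ is precisely the weight appearing there. Writing $t_\lambda=v\tau$ with $v\in W$ and $\tau\in\Sigma$, the uniqueness of the decomposition $\widetilde W=W\Sigma$ forces the element $v$ to coincide with the element $w$ of Ion's formula \eqref{macdem}, so that
\begin{equation*}
E_\lambda(q,0)=q^{c}\,\ch\bigl(V_v(\Lambda_{\tau(0)})\bigr)\big|_{e^\delta=q,\,e^{\Lambda_0}=1}.
\end{equation*}
Since $\lambda$ is anti-dominant, $E_\lambda(q,0)=P_\lambda(q,0)$, which takes care of the first equality in the statement.

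Next I would feed in the character formula. Corollary~\ref{Echar} with $\ell=1$ gives $\ch V_v(\Lambda_{\tau(0)})=e^{\Lambda_0}\sum_{b\in B}e^{\wt(b)-\delta E^{\intrinsic}(b)}$, and Theorem~\ref{E=D} lets me replace $E^{\intrinsic}(b)$ by $D(b)-D(u_B)$. Specializing $e^\delta=q$ and $e^{\Lambda_0}=1$, and observing that the specialization $e^{\Lambda_0}=1$ simultaneously removes the prefactor $e^{\Lambda_0}$ and projects the level-$0$ affine weight $\wt(b)$ onto the finite-type weight occurring on the right-hand side of the corollary, I obtain
\begin{equation*}
\ch V_v(\Lambda_{\tau(0)})\big|_{e^\delta=q,\,e^{\Lambda_0}=1}=q^{D(u_B)}\sum_{b\in B}q^{-D(b)}e^{\wt(b)}.
\end{equation*}
Combining the two displays yields $E_\lambda(q,0)=q^{c+D(u_B)}\sum_{b\in B}q^{-D(b)}e^{\wt(b)}$, so the claimed identity is equivalent to the vanishing $c+D(u_B)=0$.

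It therefore remains to show that the two normalizing exponents cancel, i.e. that $c=-D(u_B)$. Here $c$ is the explicit exponent of Ion~\cite{Ion:2003} and Sanderson~\cite{San:2000} (with the change of variables $q\to q^{-1}$ flagged in the preceding remark carefully tracked), while $D(u_B)$ is the value of the $D$-function on the ground-state path $u_B$ of Definition~\ref{def:uB}. I expect this last bookkeeping to be the main obstacle: one must compute $D(u_B)$ directly from Definitions~\ref{primeD-def} and~\ref{DB-def}, evaluating the local energies $H_{j,i}$ and the intrinsic contributions $D_i$ on the distinguished tensor factors of $u_B$ in types $A_n^{(1)}$ and $D_n^{(1)}$, and then match the resulting quantity against the combinatorial formula for $c$. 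Once $c+D(u_B)=0$ is verified the prefactor $q^{c+D(u_B)}$ equals $1$, and the displayed identity collapses to the asserted $P_\lambda(q,0)=E_\lambda(q,0)=\sum_{b\in B}q^{-D(b)}e^{\wt(b)}$.
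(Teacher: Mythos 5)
Your reduction is the same as the paper's: Ion's result gives $P_\lambda(q,0)=E_\lambda(q,0)$, and combining \eqref{macdem} with Theorem~\ref{FSS-theorem} and the energy results (the paper invokes Corollary~\ref{cor:gen1}; your route via Corollary~\ref{Echar} and Theorem~\ref{E=D} is equivalent) yields
\begin{equation*}
P_\lambda(q,0)= q^{\,c+D(u_B)}\sum_{b\in B}q^{-D(b)}e^{\wt(b)},
\end{equation*}
so everything hinges on $c=-D(u_B)$. This is exactly where your proposal has a genuine gap: you never prove $c=-D(u_B)$. You only announce that it should follow from ``bookkeeping,'' namely computing $D(u_B)$ from Definitions~\ref{primeD-def} and~\ref{DB-def} and matching it against the exponent $c$ of \cite{Ion:2003,San:2000}. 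That step is the entire remaining content of the corollary, and your plan for it is both uncarried-out and needlessly hard: it would require extracting a closed formula for $c$ from Ion's and Sanderson's papers (tracking the $q\mapsto q^{-1}$ convention change), and separately evaluating all the local energies $H_{j,i}$ on the ground state path $u_B$, neither of which is routine.

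The paper avoids any such computation with a normalization argument that your proposal misses. The generator $v=v_{r_N,1}\otimes\cdots\otimes v_{r_1,1}\in B$ has weight $\wt(v)=-\lambda^*=w_0(\lambda)$, lies in a one-dimensional weight space, and satisfies $D(v)=0$ by the normalization $H_{B_2,B_1}(v_2\otimes v_1)=0$ of the local energies. Since $P_\lambda(q,0)$ is a symmetric function, its coefficient of $e^{w_0(\lambda)}$ equals its coefficient of $e^{\lambda}$, which is $1$ by the triangularity normalization of Macdonald polynomials. Comparing with the coefficient of $e^{w_0(\lambda)}$ in the displayed sum, which comes from the single term $b=v$, gives $q^{\,c+D(u_B)-D(v)}=1$, hence $c=-D(u_B)$ without computing either quantity. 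To repair your proposal, replace the proposed bookkeeping by this coefficient comparison at the extremal weight $w_0(\lambda)$.
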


\begin{proof}
The equality $P_\lambda(q,0)= E_\lambda(q,0) $ is \cite[Theorem 4.2]{Ion:2003}. 
By Theorem~\ref{FSS-theorem} and Corollary~\ref{cor:gen1} (noting that $c_r=1$ for types $A_n^{(1)}$ and $D_n^{(1)}$
as in Figure~\ref{u-table}), equation~\eqref{macdem} implies
\begin{equation*}
    P_\lambda(q,0)= q^c \sum_{b \in B} q^{-D(b)+D(u_B)}e^{\wt(b)}. 
\end{equation*}
It remains to show that $c= - D(u_B)$.
Note that the generator $v=v_{r_N,1} \otimes \cdots \otimes v_{r_1,1} \in B$ has weight $\wt(v)=-\lambda^*=w_0(\lambda)$. 
Since $P_\lambda(q,0)$ is symmetric, the coefficient of $e^{w_0(\lambda)}$ is the same as the coefficient of $e^\lambda$, 
which by definition of the Macdonald polynomials, is 1. Hence we obtain the condition
$c+D(u_B)-D(v)=0$ which implies by our normalization $D(v)=0$ that $c=-D(u_B)$.
\end{proof}

The restriction to types $A_n^{(1)}$ and $D_n^{(1)}$ in Corollary~\ref{Mac=KR}
stems from the use of Ion's result~\cite{Ion:2003}. An extension of Corollary~\ref{Mac=KR}
to type $C_n^{(1)}$ follows from~\cite{Lenart:2012,LS:2011}. It would be interesting to determine if this 
result continues to hold in other types. 

\begin{remark}
In order to  match our notation for nonsymmetric Macdonald polynomials, we index symmetric Macdonald 
polynomials by anti-dominant weights. Many people (see for example~\cite{Knop:1997,Marshall:1999})
index Macdonald polynomials by dominant weights. The correct conversion between these conventions is that, 
for a dominant integral weight $\lambda$, $P_{\lambda}$ in the above references is denoted by $P_{w_0(\lambda)}$ here. 
\end{remark}

Corollary~\ref{Mac=KR} implies that the coefficients in the expansion of the symmetric Macdonald polynomial 
$P_\lambda(q,0)$ at $t=0$ in terms of the irreducible characters $\ch(V(\mu))$ coincide with 
$X(\mu;B^{r_N,c_{r_N}} \otimes \cdots \otimes B^{r_1,c_{r_1}})$ of~\eqref{equation.X}. In formulas
\begin{equation*}
        P_\lambda(q,0) = \sum_\mu X(\mu;B) \; \ch(V(\mu)),
\end{equation*}
where $B=B^{r_N,c_{r_N}} \otimes \cdots \otimes B^{r_1,c_{r_1}}$ with the $r_i$ determined from $\lambda$ as in
Theorem~\ref{FSS-theorem}.
 
\begin{remark}
In the case $\g = A_n^{(1)}$, one can define nonsymmetric Macdonald polynomials $E_\lambda(q,t)$ for
any $\mathfrak{gl}_n$ weight $\lambda$ (although if $\lambda$ and $\mu$ correspond to the same
$\mathfrak{sl}_n$ weight, these only differ by a scalar). Letting $\overline P'$ denote the lattice of
$\mathfrak{gl}_n$ weights, there is a natural injection
\begin{equation*}
\begin{aligned}
\Q(q,t) \overline P' &  \rightarrow \Q(q,t)[x_1^{\pm 1}, \ldots, x_n^{\pm 1} ] \\
e^\lambda & \mapsto x_1^{\lambda_1} x_2^{\lambda_2} \cdots x_n^{\lambda_n},
\end{aligned}
\end{equation*}
so we can identify $E_\lambda(q,t)$ with an actual polynomial, and we do so in the examples below
by writing $E_\lambda(x;q,t)$.
\end{remark}
 
\begin{example} \label{mmm}
The Macdonald polynomial of type $A_2^{(1)}$ indexed by the anti-dominant weight $(0,0,2)$ is given by
\begin{equation*}
P_{(0,0,2)}(x;q,0) = x_1^2 + (q+1) x_1 x_2 + x_2^2 + (q+1) x_1 x_3 + (q+1) x_2 x_3 + x_3^2.
\end{equation*}
By Corollary~\ref{Mac=KR}, this is given by the character of $B^{2^*, 1} \otimes B^{2^*, 1}$, 
where the power of $q$ counts $E^\intrinsic$. Here $2^*=1$, so we consider the KR crystal $B^{1,1} \otimes B^{1,1}$. 
Under the map from Theorem~\ref{FSS-theorem}, this is isomorphic as a classical crystal to 
$B_{s_2s_1s_0s_2}(\Lambda_2)$, and there 
are enough $0$ arrows in this Demazure crystal to calculate $E^\intrinsic$. The KR crystal is then given as follows,
where to make the picture cleaner we have only included arrows which survive in the Demazure crystal. Note that 
the ground state path is $2 \otimes 1$.
\begin{equation} \label{equation.002_example}
\begin{array}{lll}
       2\otimes 1 \stackrel{2}{\longrightarrow} 3 \otimes 1&\stackrel{0}{\longrightarrow} 1 \otimes 1
       \stackrel{1}{\longrightarrow} 1 \otimes 2 &\stackrel{1}{\longrightarrow} 2 \otimes 2
       \stackrel{2}{\longrightarrow} 2 \otimes 3 \stackrel{2}{\longrightarrow} 3 \otimes 3\\
       & \stackrel{1}{\searrow} \quad 3\otimes 2 \hspace{0.3cm}
       & \stackrel{2}{\searrow} \quad 1 \otimes 3 \quad \color{red}{\stackrel{1}{\nearrow}}
\end{array}
\end{equation}
The black arrows are all arrows that appear in $f_2^{n_1} f_1^{n_2} f_0^{n_3} f_2^{n_4} (2 \otimes 1)$ for
some exponents $n_i\ge 0$, which give all vertices of the Demazure crystal, but not all arrows. The red arrows are 
the additional arrows in the Demazure crystal. 
We calculate that $D(2 \otimes 1)= D(3 \otimes 1)=D(3 \otimes 2)=-1$, and the rest of the elements of 
$B^{1,1} \otimes B^{1,1}$ have $D=0$. This confirms Corollary~\ref{Mac=KR} in this case. 
\end{example}
 
The connection with KR crystals can also be used to compute all nonsymmetric Macdonald polynomials specialized at $t=0$. 
Fix $\lambda$ which need not be anti-dominant, and define $w, \tau$ as in \eqref{macdem}. There is a unique minimal 
$w' \in W_0$ such that $w'( \lambda)$ is an anti-dominant weight. Then $w' w ((w')^{-1})^\tau \tau$ is a translation by an 
anti-dominant weight, where the superscript $\tau$ means conjugation by $\tau$. Also, $B_{w}(\Lambda_{\tau(0)})$ embeds in 
$B_{w'w ((w')^{-1})^\tau}(\Lambda_{\tau(0)}) = B_{w'w}(\Lambda_{\tau(0)})$, where the final equality follows because $w' \in W_0$. 
Let $j: B_{w'w}(\Lambda_{\tau(0)}) \rightarrow B$ be the isomorphism from Theorem~\ref{FSS-theorem}, where $B$ is the 
appropriate composite KR crystal, and let $B' = j(B_{w}(\Lambda_{\tau(0)}))$. Then  in types $A_n^{(1)}$ and $D_n^{(1)}$ we have 
\begin{equation} \label{non-sym-KR}
 E_\lambda(q,0)= \sum_{b \in B'} q^{-D(b)}e^{\wt(b)} \;,
\end{equation}
where $D$ is calculated in the ambient composite KR crystal $B$. The subset $B'$ of $B$ can be described as
$
    \{ f_{i_k}^{n_k} \cdots f_{i_1}^{n_1} u_B \mid n_1, \ldots, n_k \geq 0  \},
$
where $s_{i_k} \cdots s_{i_1}$ is any reduced expression for $w$.
 
\begin{example} Again consider type $A_2^{(1)}$, and take the nonsymmetric Macdonald polynomial
\begin{equation*}
        E_{(0,2,0)}(x;q,0) = x_1^2 + (q+1) x_1 x_2 + x_2^2 + q x_1 x_3 + q x_2 x_3.
\end{equation*}
Then $w, \tau$ from~\eqref{macdem} are $w=s_1s_0s_2s_1$ and $\tau= 0 \rightarrow 2 \rightarrow 1 \rightarrow 0$. 
The shortest $w' \in W_0$ so that $w' (0,2,0)$ is anti-dominant is $w'=s_2$. Thus the above argument along with 
Example~\ref{mmm} shows that $B_w(\Lambda_{\tau(0)}) = B_{s_1s_0s_2s_1} (\Lambda_2) = B_{s_1 s_0 s_2} (\Lambda_2)$ embeds into $B= B^{1,1} \otimes B^{1,1}$ 
(as a classical crystal), and a simple verification shows that the image $B'$ is everything but
$1 \otimes 3, 2 \otimes 3$ and $3 \otimes 3$. This indeed verifies that $E_{(0,2,0)}(x;q,0)$ is given 
by~\eqref{non-sym-KR}. 
 \end{example} 

\begin{example} 
Consider the Macdonald polynomial of type $A_2^{(1)}$
\begin{equation*}
P_{(0,1,2)}(x;q,0) = x_1^2 x_2 + x_1 x_2^2 + x_1^2 x_3 + (q+2) x_1 x_2 x_3+ x_2^2 x_3 + x_1 x_3^2
+ x_2 x_3^2.
\end{equation*}
Our results say that this is given by the character of the affine crystal $B$, where $B= B^{2,1} \otimes B^{1,1}$. 
The ground state path is $\begin{array}{c}3\\2\end{array} \otimes 1$.
As a classical crystal, $B$ consists of two components, with highest weight elements 
$\begin{array}{c}3\\2\end{array} \otimes 1$ and $\begin{array}{c}2\\1\end{array} \otimes 1$ of weight $0$ and 
$\omega_1 + \omega_2$ respectively. One can check that $D(\begin{array}{c}3\\2\end{array} \otimes 1) =-1,$ 
and $D(\begin{array}{c}2\\1\end{array} \otimes 1)=0.$ This indeed confirms
\begin{equation*}
        P_{(0,1,2)}(x,q,0) = q + \ch(V(\omega_1+\omega_2)). 
\end{equation*}
\end{example}
 
\subsection{$q$-deformed Whittaker functions}
Gerasimov, Lebedev, Oblezin~\cite[Theorem 3.2]{GLO:2008} showed that $q$-deformed
$\mathfrak{gl}_n$-Whittaker functions are Macdonald polynomials specialized at $t=0$. As above this also gives 
a link to Demazure characters, and hence by the results in Section \ref{section.energy.equiv} to KR crystals graded 
by their energy functions. It would be interesting to generalize this to other types (in particular type $D_n^{(1)}$, where both Ion's results \cite{Ion:2003} and our results from Section \ref{section.energy.equiv} hold). 
The $q$-deformed $\mathfrak{gl}_n$-Whittaker functions are simultaneous eigenfunctions
of a $q$-deformed Toda chain, which might serve as a starting point for this generalization.
See also the recent paper by Braverman and Finkelberg~\cite{BF:2012}.
Brubaker, Bump and Licata~\cite{BBL:2011} constructed a natural basis of the Iwahori fixed vectors in the 
Whittaker model using Demazure-Lusztig operators. This gives another avenue to the link between
Demazure characters and Whittaker functions.
 

\end{document}